\newtheorem{theorem}{Theorem}
\theoremstyle{plain}
\newtheorem{corollary}{Corollary}
\newtheorem{definition}{Definition}
\newtheorem{example}{Example}
\newtheorem{lemma}{Lemma}
\newtheorem{remark}{Remark}
\numberwithin{equation}{section}
\begin{document}
\title{On the duals of normed spaces and quotient shapes}
\author{Nikica Ugle\v{s}i\'{c}}
\address{Sv. Ante 9, 23287 Veli R\aa t, Hrvatska (Croatia)}
\email{nuglesic@unizd.hr.}
\date{October 26, 2018}
\subjclass[2000]{[2010]: Primary 54E99; Secondary 55P55 }
\keywords{normed (Banach, Hilbert) vectorial space, quotient normed space,
algebraic dimension, dual normed space, Hom-functor, (infinite) cardinal,
(general) continuum hypothesis, expansion, quotient shape, continuous linear
extension. }
\thanks{This paper is in final form and no version of it will be submitted
for publication elsewhere.}

\begin{abstract}
Some properties of the (normed) dual Hom-functor $D$ and its iterations $%
D^{n}$ are exhibited. For instance: $D$ turns every canonical embedding (in
the second dual space) into a retraction (of the third dual onto the first
one); $D$ rises the countably infinite (algebraic) dimension \emph{only}; $D$
does not change the finite quotient shape type. By means of that, the \emph{%
finite} quotient shape classification of normed vectorial spaces is
completely solved. As a consequence, two extension type theorems are derived.
\end{abstract}

\maketitle

\section{Introduction}

The quotient shape theory is a genuine kind of the general shape theory
(which began as a generalization of the homotopy theory such that the
locally bad spaces can be also considered and classified in a very suitable
\textquotedblleft homotopical\textquotedblright\ way; [1]. [2], [3], [5]
and, especially, [11]). Although, in general, founded purely categorically,
it is mostly well known only as \emph{the} (standard) shape theory of
topological spaces with respect to spaces having the homotopy types of
polyhedra. The generalizations founded in [8] and [18] are, primarily, also
on that line.

The quotient shape theory was introduced a few years ago by the author,
[13]. Though it is a kind of the general (abstract) shape theory, it can be
straightforwardly applied to any concrete category $\mathcal{C}$., whenever
an infinite cardinal $\kappa \geq \aleph _{0}$ is chosen. Concerning a shape
of objects, in general, one has to decide which ones are \textquotedblleft
nice\textquotedblright\ absolutely and/or relatively (with respect to the
chosen ones). In this approach, an object is \textquotedblleft
nice\textquotedblright\ if it is isomorphic to a quotient object belonging
to a special full subcategory and if it (its \textquotedblleft
basis\textquotedblright ) has cardinality less than (less than or equal to)
a given infinite cardinal.\emph{\ }It leads to the basic idea: to
approximate a $\mathcal{C}$-object $X$ by a suitable inverse system
consisting of its quotient objects $X_{\lambda }$ (and the quotient
morphisms) which have cardinalities, or dimensions - in the case of
vectorial spaces, less than (less than or equal to) $\kappa $. Such an
approximation exists in the form of any $\kappa ^{-}$-expansion ($\kappa $%
-expansion) of $X$,

$\boldsymbol{p}_{\kappa ^{-}}=(p_{\lambda }):X\rightarrow \boldsymbol{X}%
_{\kappa ^{-}}=(X_{\lambda },p_{\lambda \lambda ^{\prime }},\Lambda _{\kappa
^{-}})$

($\boldsymbol{p}_{\kappa }=(p_{\lambda }):X\rightarrow \boldsymbol{X}%
_{\kappa }=(X_{\lambda },p_{\lambda \lambda ^{\prime }},\Lambda _{\kappa })$%
),

\noindent where $\boldsymbol{X}_{\kappa ^{-}}$ ($\boldsymbol{X}_{\kappa }$)
belongs to the subcategory $pro$-$\mathcal{D}_{\kappa ^{-}}$ ($pro$-$%
\mathcal{D}_{\kappa }$) of $pro$-$\mathcal{D}$, and $\mathcal{D}_{\kappa
^{-}}$ ($\mathcal{D}_{\kappa }$) is the subcategory of $\mathcal{D}$
determined by all the objects having cardinalities, or dimensions - for
vectorial spaces, less than (less than or equal to) $\kappa $, while $%
\mathcal{D}$ is a full subcategory of $\mathcal{C}$. Clearly, if $X\in Ob(%
\mathcal{D})$ and the cardinality $\left\vert X\right\vert <\kappa $ ($%
\left\vert X\right\vert \leq \kappa $) (or of the \textquotedblleft
baisis\textquotedblright\ of $X$), then the rudimentary pro-morphism $%
\left\lfloor 1_{X}\right\rfloor :X\rightarrow \left\lfloor X\right\rfloor $
is a $\kappa ^{-}$-expansion ($\kappa $-expansion) of $X$. The corresponding
shape category $Sh_{\mathcal{D}_{\kappa ^{-}}}(\mathcal{C})$ ($Sh_{\mathcal{D%
}_{\kappa }}(\mathcal{C})$) and shape functor $S_{\kappa ^{-}}:\mathcal{C}%
\rightarrow Sh_{\mathcal{D}_{\kappa ^{-}}}(\mathcal{C})$ ($S_{\kappa }:%
\mathcal{C}\rightarrow Sh_{\mathcal{D}_{\kappa }}(\mathcal{C})$) exist by
the general (abstract) shape theory, and they have all the appropriate
general properties. Moreover, there exist the relating functors $S_{\kappa
^{-}\kappa }:Sh_{\mathcal{D}_{\kappa }}(\mathcal{C})\rightarrow Sh_{\mathcal{%
D}_{\kappa ^{-}}}(\mathcal{C})$ and $S_{\kappa \kappa ^{\prime }}:Sh_{%
\mathcal{D}_{\kappa ^{\prime }}}(\mathcal{C})\rightarrow Sh_{\mathcal{D}%
_{\kappa }}(\mathcal{C})$, $\kappa \leq \kappa ^{\prime }$, such that $%
S_{\kappa ^{-}\kappa }S_{\kappa }=S_{\kappa ^{-}}$ and $S_{\kappa \kappa
^{\prime }}S_{\kappa ^{\prime }}=S_{\kappa }$. Even in simplest case of $%
\mathcal{D}=\mathcal{C}$, the quotient shape classifications are very often
non-trivial and very interesting. In such a case we simplify the notation $%
Sh_{\mathcal{D}_{\kappa ^{-}}}(\mathcal{C})$ ($Sh_{\mathcal{D}_{\kappa }}(%
\mathcal{C})$) to $Sh_{\kappa ^{-}}(\mathcal{C})$ ($Sh_{\kappa }(\mathcal{C}%
) $) or to $Sh_{\kappa ^{-}}$ ($Sh_{\kappa }$) when $\mathcal{C}$ is fixed.

In [13], several well known concrete categories were considered and many
examples are given which show that the quotient shape theory yields
classifications strictly coarser than those by isomorphisms. In [14] and
[15] were considered the quotient shapes of (purely algebraic, topological
and normed - the category $\mathcal{N}_{F}$) vectorial spaces and
topological spaces, respectively. In paper [16], we have continued the
studying of quotient shapes of normed vectorial spaces of [14], Section 4.1,
primarily and \emph{separately} focused to the well known $l_{p}$ and $L_{p}$
spaces. The main general result of [16] is that the \emph{finite} quotient
shape type of a normed spaces (over the field $F\in \{\mathbb{R},\mathbb{C}%
\} $) reduces to that of its completion (Banach) spaces, and consequently,
that the quotient shape theory of $(\mathcal{N}_{F},(\mathcal{N}_{F})_{\text{%
\b{0}}})$ reduces to that of the full subcategory pair $(\mathcal{B}_{F},(%
\mathcal{B}_{F})_{\text{\b{0}}})$ of Banach spaces. In the very recent paper
[17] we have proven that the finite quotient shape type of normed spaces is
an invariant of the (algebraic) dimension, but not conversely. The
counterexamples exist, at least, in the dimensional par $\{\aleph
_{0},2^{\aleph _{0}}\}$. Further, in the case of separable Banach spaces,
the classifications by dimension and by the finite quotient shape (as well
as by the countable quotient shape) coincide. An application of those
results has yielded two\ extension type theorems into lower dimensional
Banach spaces.

In this paper we firstly consider the (iterated) dual normed spaces. We have
used the functorial approach, i.e., we treat $D^{2n-1}:\mathcal{N}%
_{F}\rightarrow \mathcal{B}_{F}$ ($D^{2n}:\mathcal{N}_{F}\rightarrow 
\mathcal{B}_{F}$), $n\in \mathbb{N}$, as a contravariant (covariant) $%
Hom_{F} $-functor. Among interesting and useful results, let us quote that,
for every $X$, $D^{2n-1}$ turns the canonical embedding $j:X\rightarrow
D^{2}(X)$ into $D^{2n-1}(j):D^{2n+1}(X)\rightarrow D^{2n-1}(X)$ which is a
retraction, while $D^{2n}(j):D^{2n}(X)\rightarrow D^{2n+2}(X)$ is a section,
and thus, one may consider $D^{n}(X)$ to be a retract of $D^{n+2}(X)$
admitting a closed direct complement. Therefore, the problem of the strict
relation between $\dim D(X)$ and $\dim X$ (generally, $\dim X\leq \dim D(X)$%
) occurs in this consideration as a very significant one. We have solved it
by means of the quotient shape theory technique as follows:

$\dim D(X)>\dim X$\ if and only if $\dim X=\aleph _{0}$.

\noindent By applying that fact, we made an important step towards the main
goal which was the complete \emph{finite} quotient shape classification of
all normed vectorial spaces over $F\in \{\mathbb{R},\mathbb{C}\}$. Briefly,
for all but countably infinite-dimensional normed spaces, the finite
quotient shape types are their (algebraic) dimension classes, while all
countably infinite-dimensional normed spaces belong to the dimension class $%
2^{\aleph _{0}}$. Consequently, every finite quotient shape type of normed
spaces contains a representative that is a Hilbert space, and in addition,
the finite quotient shape type of all $\aleph _{0}$- and all $2^{\aleph
_{0}} $-dimensional normed spaces admits an $\aleph _{0}$-dimensional
unitary representative.

At the end, we have established a significant improvements of the mentioned
(previously obtained, [17]) extension type theorems. For instance (Theorem
6):

\emph{Let }$X$\emph{\ be a normed vectorial space, let }$Y$\emph{\ be a
Banach space (over the same field) and let }$f_{n}:D^{n}(X)\rightarrow Y$%
\emph{, }$n\in N$\emph{, be a continuous linear function. Then, for every }$%
k\in \{0\}\cup N$\emph{, }$f_{n}$\emph{\ admits a continuous linear
norm-preserving extension }$f_{n,k}:D^{n+2k}(X)\rightarrow Y$.

\section{Preliminaries}

We shall frequently use and apply in the sequel several general or special
well known facts without referring to any source. So we remind a reader that

\noindent - our general shape theory technique is that of [11];

\noindent - the needed set theoretic (especially, concerning cardinals) and
topological facts can be found in [4];

\noindent - the facts concerning functional analysis are taken from [6],
[9], [10] or [12];

\noindent - our category theory language follows that of [7].

For the sake of completeness, let us briefly repeat the construction of a
quotient shape category and a quotient shape functor, [13]. Given a category
pair $(\mathcal{C},\mathcal{D}$), where $\mathcal{D}\subseteq \mathcal{C}$
is full, and a cardinal $\kappa $, let $\mathcal{D}_{\kappa ^{-}}$ ($%
\mathcal{D}_{\kappa }$) denote the full subcategory of $\mathcal{D}$
determined by all the objects having cardinalities or, in some special
cases, the cardinalities of \textquotedblleft bases\textquotedblright\ less
than (less or equal to) $\kappa $. By following the main principle, let $(%
\mathcal{C},\mathcal{D}_{\kappa ^{-}})$ ($(\mathcal{C},\mathcal{D}_{\kappa
}) $) be such a pair of \emph{concrete} categories. If

\noindent (a) every $\mathcal{C}$-object $(X,\sigma )$ admits a directed set 
$R(X,\sigma ,\kappa ^{-})\equiv \Lambda _{\kappa ^{-}}$ ($R(X,\sigma ,\kappa
)\equiv \Lambda _{\kappa }$) of equivalence relations $\lambda $ on $X$ such
that each quotient object $(X/\lambda ,\sigma _{\lambda })$ has to belong to 
$\mathcal{D}_{\kappa ^{-}}$ ($\mathcal{D}_{\kappa }$), while each quotient
morphism $p_{\lambda }:(X,\sigma )\rightarrow (X/\lambda ,\sigma _{\lambda
}) $ has to belong to $\mathcal{C}$;

\noindent (b) the induced morphisms between quotient objects belong to $%
\mathcal{D}_{\kappa ^{-}}$ ($\mathcal{D}_{\kappa }$);

\noindent (c) every morphism $f:(X,\sigma )\rightarrow (Y,\tau )$ of $%
\mathcal{C}$, having the codomain in $\mathcal{D}_{\kappa ^{-}}$ ($\mathcal{D%
}_{\kappa }$), factorizes uniquely through a quotient morphism $p_{\lambda
}:(X,\sigma )\rightarrow (X/\lambda ,\sigma _{\lambda })$, $f=gp_{\lambda }$%
, with $g$ belonging to $\mathcal{D}_{\kappa ^{-}}$ (\thinspace $\mathcal{D}%
_{\kappa }$),

\noindent then $\mathcal{D}_{\kappa ^{-}}$ ($\mathcal{D}_{\kappa }$) is a
pro-reflective subcategory of $\mathcal{C}$. Consequently, there exists a
(non-trivial) \emph{(quotient)\ shape\ category }$Sh_{(\mathcal{C},\mathcal{D%
}_{\kappa ^{-}})}\equiv Sh_{\mathcal{D}_{\kappa ^{-}}}(\mathcal{C})$ ($Sh_{(%
\mathcal{C},\mathcal{D}_{\kappa })}\equiv Sh_{\mathcal{D}_{\kappa }}(%
\mathcal{C})$) obtained by the general construction.

Therefore, a $\kappa ^{-}$-shape morphism $F_{\kappa ^{-}}:(X,\sigma
)\rightarrow (Y,\tau )$ is represented by a diagram (in $pro$-$\mathcal{C}$)

$%
\begin{array}{ccc}
(\boldsymbol{X},\boldsymbol{\sigma })_{\kappa ^{-}} & \overset{p_{\kappa
^{-}}}{\leftarrow } & (X,\sigma ) \\ 
\boldsymbol{f}_{\kappa ^{-}}\downarrow &  &  \\ 
(\boldsymbol{Y},\boldsymbol{\tau })_{\kappa ^{-}} & \overset{q_{\kappa ^{-}}}%
{\leftarrow } & (Y,\tau )%
\end{array}%
$

\noindent (with $\boldsymbol{p}_{\kappa ^{-}}$ and $\boldsymbol{q}_{\kappa
^{-}}$ - a pair of appropriate expansions), and similarly for a $\kappa $%
-shape morphism $F_{\kappa }:(X,\sigma )\rightarrow (Y,\tau )$. Since all $%
\mathcal{D}_{\kappa ^{-}}$-expansions ($\mathcal{D}_{\kappa }$-expansions)
of a $\mathcal{C}$-object are mutually isomorphic objects of $pro$-$\mathcal{%
D}_{\kappa ^{-}}$ ($pro$-$\mathcal{D}_{\kappa }$), the composition and
identities follow straightforwardly. Observe that every quotient morphism $%
p_{\lambda }$ is an effective epimorphism. (If $U$ is the forgetful functor,
then $U(p_{\lambda })$ is a surjection), and thus condition (E2) for an
expansion follows trivially.

The corresponding \emph{\textquotedblleft quotient\ shape\textquotedblright\
functors} $S_{\kappa ^{-}}:\mathcal{C}\rightarrow Sh_{\mathcal{D}_{\kappa
^{-}}}(\mathcal{C})$ and $S_{\kappa }:\mathcal{C}\rightarrow Sh_{\mathcal{D}%
_{\kappa }}(\mathcal{C})$ are defined in the same general manner. That means,

$S_{\kappa ^{-}}(X,\sigma )=S_{\kappa }(X,\sigma )=(X,\sigma )$;

if $f:(X,\sigma )\rightarrow (Y,\tau )$ is a $\mathcal{C}$-morphism, then,
for every $\mu \in M_{\kappa ^{-}}$, the composite $g_{\mu }f:(Y,\tau
)\rightarrow (Y_{\mu },\tau _{\mu })$ factorizes (uniquely) through a $%
p_{\lambda (\mu )}:(X,\sigma )\rightarrow (X_{\lambda (\mu )},\sigma
_{\lambda (\mu )}),$ and thus, the correspondence $\mu \mapsto \lambda (\mu
) $ yields a function $\phi :M_{\kappa ^{-}}\rightarrow \Lambda _{\kappa
^{-}}$ and a family of $\mathcal{D}_{\kappa ^{-}}$-morphisms $f_{\mu
}:(X_{\phi (\mu )},\sigma _{\phi (\mu )})\rightarrow (Y_{\mu },\tau _{\mu })$
such that $q_{\mu }f=f_{\mu }p_{\phi (\mu )}$;

\noindent one easily shows that $(\phi ,f_{\mu }):(\boldsymbol{X},%
\boldsymbol{\sigma })_{\kappa ^{-}}\rightarrow (\boldsymbol{Y},\boldsymbol{%
\tau })_{\kappa ^{-}}$ is a morphism of $inv$-$\mathcal{D}_{\kappa ^{-}}$,
so the equivalence class $\boldsymbol{f}_{\kappa ^{-}}=[(\phi ,f_{\mu })]:(%
\boldsymbol{X},\boldsymbol{\sigma })_{\kappa ^{-}}\rightarrow (\boldsymbol{Y}%
,\boldsymbol{\tau })_{\kappa ^{-}}$ is a morphism of $pro$-$\mathcal{D}%
_{\kappa ^{-}}$;

\noindent then we put $S_{\kappa ^{-}}(f)=\left\langle \boldsymbol{f}%
_{\kappa ^{-}}\right\rangle \equiv F_{\kappa ^{-}}:(X,\sigma )\rightarrow
(Y,\tau )$ in $Sh_{\mathcal{D}_{\kappa ^{-}}}(\mathcal{C})$.

\noindent The identities and composition are obviously preserved. In the
same way one defines the functor $S_{\kappa }$.

Furthermore, since $(\boldsymbol{X,\sigma })_{\kappa ^{-}}$ is a subsystem
of $(\boldsymbol{X,\sigma })_{\kappa }$ (more precisely, $(\boldsymbol{%
X,\sigma })_{\kappa }$ is a subobject of $(\boldsymbol{X,\sigma })_{\kappa
^{-}}$ in $pro$-$\mathcal{D}$), one easily shows that there exists a functor 
$S_{\kappa ^{-}\kappa }:Sh_{\mathcal{D}_{\kappa }}(\mathcal{C})\rightarrow
Sh_{\mathcal{D}_{\kappa ^{-}}}(\mathcal{C})$ such that $S_{\kappa ^{-}\kappa
}S_{\kappa }=S_{\kappa ^{-}}$, i.e., the diagram

$%
\begin{array}{ccccc}
&  & \mathcal{C} &  &  \\ 
& \swarrow S_{\kappa ^{-}} &  & S_{\kappa }\searrow &  \\ 
Sh_{\mathcal{D}_{\kappa ^{-}}}(\mathcal{C}) &  & \underleftarrow{S_{\kappa
^{-}\kappa }} &  & Sh_{\mathcal{D}_{\kappa }}(\mathcal{C})%
\end{array}%
$

\noindent commutes. Moreover, an analogous functor $S_{\kappa \kappa
^{\prime }}:Sh_{\mathcal{D}_{\kappa ^{\prime }}}(\mathcal{C})\rightarrow Sh_{%
\mathcal{D}_{\kappa }}(\mathcal{C})$, satisfying $S_{\kappa \kappa ^{\prime
}}S_{\kappa ^{\prime }}=S_{\kappa }$, exists for every pair of infinite
cardinals $\kappa \leq \kappa ^{\prime .}$

Generally, in the case of $\kappa =\aleph _{0}$, the $\kappa ^{-}$-shape is
said to be the \emph{finite (quotient) shape}, because all the objects in
the expansions are of finite (bases) cardinalities, and the category is
denoted by $Sh_{\mathcal{D}_{\underline{0}}}(\mathcal{C})$ or by $Sh_{\text{%
\b{0}}}(\mathcal{C})\equiv Sh_{\underline{0}}$ only, whenever $\mathcal{D}=%
\mathcal{C}$.

Let us finally notice that, though $\mathcal{D}\nsubseteq \mathcal{C}%
_{\kappa ^{-}})$ ($(\mathcal{D\nsubseteq C}_{\kappa })$), the quotient shape
category $Sh_{\mathcal{C}_{\kappa ^{-}}}(\mathcal{D})$ ($Sh_{\mathcal{C}%
_{\kappa }}(\mathcal{D})$) exists as a full subcategory of $Sh_{\mathcal{C}%
_{\kappa ^{-}}}(\mathcal{C})$ ($Sh_{\mathcal{C}_{\kappa }}(\mathcal{C})$),
and, if $\mathcal{D}$ is closed with respect to quotients, then $Sh_{%
\mathcal{C}_{\kappa ^{-}}}(\mathcal{D})=Sh_{\mathcal{D}_{\kappa ^{-}}}(%
\mathcal{D})$ ($Sh_{\mathcal{C}_{\kappa }}(\mathcal{D})=Sh_{\mathcal{D}%
_{\kappa }}(\mathcal{D})$).

\section{Some properties of the (normed) dual functors}

We recall hereby the well known dual space of a normed vectorial space over $%
F\in \{\mathbb{R},\mathbb{C}\}$. For our purpose it is much more convenient
to use the categorical approach as follows. There exists a contravariant
structure preserving hom-functor, i.e., the contravariant Hom-functor

$Hom_{F}\equiv D:\mathcal{N}_{F}\rightarrow \mathcal{N}_{F}$,

$D(X)=X^{\ast }$ - the (normed) dual space of $X$,

$D(f:X\rightarrow Y)\equiv D(f)\equiv f^{\ast }:Y^{\ast }\rightarrow X^{\ast
}$, $D(f)(y^{1})=y^{1}f$,

\noindent and $D[\mathcal{N}_{F}]\subseteq \mathcal{B}_{F}$. Furthermore,
for every ordered pair $X,Y\in Ob(\mathcal{N}_{F})$, the function

$D_{Y}^{X}:\mathcal{N}_{F}(X,Y)\equiv L(X,Y)\rightarrow L(Y^{\ast },X^{\ast
})\equiv \mathcal{B}_{F}(Y^{\ast },X^{\ast })$

\noindent is a linear isometry ($\left\Vert D(f)\right\Vert =\left\Vert
f\right\Vert $), and hence, $D_{Y}^{X}$ belongs to $Mor(\mathcal{N}_{F})$
and $D$ is a faithful functor.

Further, there exists a covariant Hom-functor

$Hom_{F}^{2}\equiv D^{2}:\mathcal{N}_{F}\rightarrow \mathcal{N}_{F}$,

$D^{2}(X)=D(D(X))\equiv X^{\ast \ast }$ - the (normed) second dual space of $%
X$,

$D^{2}(f:X\rightarrow Y)\equiv D^{2}(f)=D(D(f))\equiv f^{\ast \ast }:X^{\ast
\ast }\rightarrow Y^{\ast \ast }$, $D^{2}(f)(x^{2})=x^{2}D(f)$,

\noindent and $D^{2}[\mathcal{N}_{F}]\subseteq \mathcal{B}_{F}$. (Caution:
The notation \textquotedblleft $D(D(f)(x^{2}))$\textquotedblright\ makes no
sense!) Furthermore, for every ordered pair $X,Y\in Ob(\mathcal{N}_{F})$,
the function

\noindent $(D^{2})_{Y}^{X}:\mathcal{N}_{F}(X,Y)\equiv L(X,Y)\rightarrow
L(X^{\ast \ast },Y^{\ast \ast })\equiv \mathcal{B}_{F}(X^{\ast \ast
},Y^{\ast \ast })$

\noindent is a linear isometry ($\left\Vert D^{2}(f)\right\Vert =\left\Vert
f\right\Vert $), and thus, $(D^{2})_{Y}^{X}$ belongs to $Mor(\mathcal{N}%
_{F}) $ and $D^{2}$ is a faithful functor.

. The most useful fact hereby is the existence of a certain natural
transformation $j:1_{\mathcal{N}_{F}}\rightsquigarrow D^{2}$ of the
functors, where, for every $X$, $j_{X}:X\rightarrow D^{2}(X)$ is an
isometric embedding (the \emph{canonical} embedding defined by $%
(j_{X}(x))(x^{1})=x^{1}(x)$), and $Cl(j_{X}[X])\subseteq D^{2}(X)$ is the
well known (Banach) completion of $X$. Namely, given a pair $X$, $Y$ of
normed spaces, then

$(\forall f\in \mathcal{N}_{F}(X,Y)$, $j_{Y}f=D^{2}(f)j_{X}$

\noindent holds true. Indeed, for every $x\in X$, every $y^{1}\in D^{1}(Y)$
and every $x^{2}\in D^{2}(X)$,

($(j_{Y}f)(x))(y^{1})=y^{1}(f(x))=j_{X}(x)(y^{1}f)=j_{X}(x)(D(f)(y^{1}))=$

$%
(j_{X}(x)D(f))(y^{1})=(D^{2}(f)(j_{X}(x))(y^{1})=((D^{2}(f)j_{X})(x))(y^{1}) 
$.

Clearly, if $X$ is a Banach space, then the canonical embedding $j_{X}$ is
closed. Continuing by induction, for every $n\in \mathbb{N}$, $n>2$, there
exists a $Hom_{F}$-functor $D^{n}$ of $\mathcal{N}_{F}$ to $\mathcal{N}_{F}$
such that $D^{n}[\mathcal{N}_{F}]\subseteq \mathcal{B}_{F}$, $D^{n}$ is
contravariant (covariant) whenever $n$ is odd (even), and for every ordered
pair $X$, $Y$ of normed spaces, the function $(D^{n})_{Y}^{X}$ is an
isometric linear morphism of the normed space $L(X,Y)$ to the Banach space $%
L(D^{n}(Y),D^{n}(X))$ ($n$ odd) or $L(D^{n}(X),D^{n}(Y))$ ($n$ even).
Consequently, every $(D^{n})_{Y}^{X}$ preserves null-morphisms, i.e., $%
D^{n}(c_{\theta })=c_{\theta }^{n}$. However, it holds much more than that.

\begin{lemma}
\label{L1}(i) The functor $D$ turns

\noindent - (open) epimorphisms into (closed) monomorphisms;

\noindent - open or closed monomorphisms and embeddings into (open)
epimorphisms;

\noindent - isometric isomorphisms into isometric isomorphisms.

\noindent The functor $D^{2}$ maps

\noindent - open epimorphisms into (open) epimorphisms;

\noindent - open or closed monomorphisms and embeddings into closed
monomorphisms;

\noindent - isometries into closed isometries.

\noindent (ii) In addition, the restriction functor $D$\TEXTsymbol{\vert}$%
\mathcal{B}_{F}$ turns

\noindent - epimorphisms into closed monomorphisms;

\noindent - (isometric) monomorphisms with closed ranges into (closed)
epimorphisms.

\noindent The restriction functor $D^{2}|\mathcal{B}_{F}$ maps

\noindent - epimorphisms into epimorphisms;

\noindent - monomorphisms with closed ranges into closed monomorphisms.

\noindent (iii) For all $X,Y\in Ob\mathcal{N}_{F}$, the canonical embedding

$j_{L(X,Y)}:L(X,Y)\rightarrow D^{2}(L(X,Y))$

\noindent factorizes trough the linear isometry

$(D^{2})_{Y}^{X}:L(X,Y)\rightarrow L(D^{2}(X),D^{2}(Y))$, $%
D^{2}(f)(x^{2})=x^{2}D(f)$.

\noindent If $Y$ is a Banach space, then the linear isometry

$D_{Y}^{X}:L(X,Y)\rightarrow L(D(Y),D(X))$, $D(f)(y^{1})=y^{1}f$,

\noindent is closed.
\end{lemma}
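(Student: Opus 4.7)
The plan is to split the lemma into its three parts and handle them using four classical tools: the Hahn--Banach extension theorem, the annihilator identity $\mathrm{Im}\,D(f) = (\ker f)^{\perp}$ valid when $f$ is an open surjection, the open mapping theorem for Banach spaces, and the completeness of $L(X,Y)$ whenever $Y$ is Banach.

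For part (i) I would first verify the epi-to-mono direction: given a continuous linear surjection $f:X\to Y$, any $y^{1}\in Y^{\ast}$ with $D(f)(y^{1})=y^{1}\circ f = 0$ must vanish on $f(X)=Y$, so $D(f)$ is injective. To upgrade this to a \emph{closed} monomorphism when $f$ is moreover open, I would identify $\mathrm{Im}\,D(f)$ with $(\ker f)^{\perp}\subseteq X^{\ast}$, which is norm-closed as an intersection of kernels of evaluation functionals. Dually, for an embedding $f:X\to Y$ (equivalently, after OMT, for an open or closed monomorphism between Banach spaces), every $x^{1}\in X^{\ast}$ extends by Hahn--Banach to some $y^{1}\in Y^{\ast}$ with $y^{1}\circ f = x^{1}$, which gives surjectivity of $D(f)$; openness then follows because $D(f)$ is a surjection between the Banach spaces $Y^{\ast}$ and $X^{\ast}$. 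Isometric isomorphisms pass through trivially because $\|D(f)\| = \|f\|$. The $D^{2}$ assertions fall out by iterating $D$, using that the intermediate spaces $D(X), D(Y)$ are already Banach so that OMT is freely available. Part (ii) is exactly this analysis restricted to $\mathcal{B}_{F}$, where a surjection is automatically open and an injection with closed range is automatically a topological embedding; these two automatic upgrades are what promote the weaker hypotheses of (i) to the stronger conclusions of (ii), and the isometric refinement uses that Hahn--Banach produces \emph{norm-preserving} extensions so the dual of an isometric embedding is a quotient map of operator norm one.

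Part (iii) splits into two independent verifications. For the factorization of $j_{L(X,Y)}$ through $(D^{2})_{Y}^{X}$, I would unwind both definitions in parallel: for $f\in L(X,Y)$, $x^{\ast\ast}\in D^{2}(X)$ and $y^{\ast}\in D(Y)$, one computes $D^{2}(f)(x^{\ast\ast})(y^{\ast})=x^{\ast\ast}(y^{\ast}\circ f)$, whereas for $\phi\in D(L(X,Y))$, $j_{L(X,Y)}(f)(\phi)=\phi(f)$. Since $f\mapsto x^{\ast\ast}(y^{\ast}\circ f)$ defines a bounded linear functional $\phi_{x^{\ast\ast},y^{\ast}}\in D(L(X,Y))$ of norm at most $\|x^{\ast\ast}\|\,\|y^{\ast}\|$, the prescription $\eta\mapsto \bigl((x^{\ast\ast},y^{\ast})\mapsto \eta(\phi_{x^{\ast\ast},y^{\ast}})\bigr)$ yields a natural bounded linear map $\Psi:D^{2}(L(X,Y))\to L(D^{2}(X),D^{2}(Y))$ satisfying $\Psi\circ j_{L(X,Y)} = (D^{2})_{Y}^{X}$. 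Because both $j_{L(X,Y)}$ and $(D^{2})_{Y}^{X}$ are isometric embeddings of the same space $L(X,Y)$, $\Psi$ restricts to an isometric identification of their images, which is the claimed factorization. For the closedness assertion, if $Y$ is Banach then $L(X,Y)$ is complete, hence its isometric image under $D_{Y}^{X}$ is a complete, and therefore closed, linear subspace of the Banach space $L(D(Y),D(X))$.

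The main obstacle I foresee is purely organizational: handling the many closely related sub-implications of (i) and (ii) without repetition, and pinning down the correct reading of ``factorizes through'' in (iii). Once the auxiliary functional $\phi_{x^{\ast\ast},y^{\ast}}$ is seen to be bounded and $\Psi$ is assembled, the remaining verifications reduce to routine applications of Hahn--Banach, annihilator duality, and a single invocation of OMT in each case.
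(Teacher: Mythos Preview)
Your proposal is correct and covers every claim, but several sub-arguments differ in technique from the paper's proof, and in one place you read the statement in the opposite direction from the author.

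For the ``open epimorphism $\Rightarrow$ closed monomorphism'' clause, the paper argues sequentially: given $x^{1}\in \overline{R(D(f))}$ it picks a convergent sequence, uses the algebraic splitting $X=N(f)\dotplus W$ to build a candidate $y^{1}$, and checks continuity of $y^{1}$ by hand using openness of $f$. Your annihilator argument $\mathrm{Im}\,D(f)=(\ker f)^{\perp}$ is shorter and more conceptual; it buys you closedness immediately, at the cost of silently using the topological isomorphism $X/\ker f\cong Y$ (which is exactly where openness of $f$ enters). Both routes are standard; yours is the textbook one.

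For the factorization in (iii) you go in the direction opposite to the paper. The author defines $u:R((D^{2})_{Y}^{X})\to D^{2}(L(X,Y))$ by $u(D^{2}(f))=j_{L(X,Y)}(f)$ and asserts $j_{L(X,Y)}=u\circ (D^{2})_{Y}^{X}$, i.e., $j$ factors \emph{through} $(D^{2})_{Y}^{X}$. Your map $\Psi:D^{2}(L(X,Y))\to L(D^{2}(X),D^{2}(Y))$ satisfies $\Psi\circ j_{L(X,Y)}=(D^{2})_{Y}^{X}$, which is the reverse factorization. Your final sentence (``$\Psi$ restricts to an isometric identification of their images'') does recover the paper's direction by inverting $\Psi$ on $\mathrm{Im}\,j_{L(X,Y)}$, but you should say this explicitly: the paper's $u$ is precisely $(\Psi|_{\mathrm{Im}\,j})^{-1}$ composed with the codomain inclusion. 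Your construction is actually stronger, since $\Psi$ is globally defined rather than only on the range.

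For the closedness of $D_{Y}^{X}$ when $Y$ is Banach, the paper again argues sequentially on an arbitrary closed set $C\subseteq L(X,Y)$, whereas you observe that $L(X,Y)$ is complete and an isometric image of a complete space is closed. Your argument gives ``closed range'' directly; to match the paper's reading (``closed map'') you need the trivial extra step that an isometry onto a closed subspace is a homeomorphism onto its image, hence carries closed sets to closed sets.
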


\begin{proof}
(i). Assume that $f\in \mathcal{N}_{F}(X,Y)$ is an epimorphism. Let $%
y^{1},y^{1\prime }\in Y^{\ast }$ such that $D(f)(y^{1})=D(f)(y^{1\prime })$.
It means that $y^{1}f=y^{1\prime }f$, implying that $y^{1}=y^{1\prime }$
because $f$ is an epimorphism. Hence, $D(f)$ is a monomorphism of the
underlying abelian groups, and consequently, it is a monomorphism of $%
\mathcal{B}_{F}\subseteq \mathcal{N}_{F}$. Assume, in addition, that $f$ is
open. It suffices to prove that the range $R(D(f))\trianglelefteq X^{\ast }$
is a closed subspace, i.e., that $Cl(R(D(f)))\subseteq R(D(f))$. Namely, if
it is so, then $D(f)$ is a monomorphism of a Banach space with the range
that is a Banach space too. Then,

$D(f)^{\prime }:Y^{\ast }\rightarrow R(D(f))$, $D(f)^{\prime
}(y^{1})=D(f)(y^{1})$,

\noindent is a continuous bijection of Banach spaces, and thus, an
isomorphism, implying that $D(f)$ is closed monomorphism. Let $x^{1}\in
Cl(R(D(f))).$ Consider a sequence $(x_{n}^{1})$ in $R(D(f))$ such that $\lim
(x_{n}^{1})=x^{1}$. Since $D(f)$ is a monomorphism, there exists a unique
sequence $(y_{n}^{1})$ in $Y^{\ast }$ such that, for each $n\in \mathbb{N}$,

$D(f)(y_{n}^{1})=y_{n}^{1}f=x_{n}^{1}$.

\noindent Recall that, algebraically, $X=N(f)\overset{\cdot }{+}W$, where $%
W\cong R(f)=Y$, and that each fiber $f^{-1}[\{y\}]$, $y\in Y$, is the
equivalence class $[x]_{f}=x+N(f)$, where $f(x)=y$. Thus, for every $n$, and
every $y\in Y$,

$y_{n}^{1}(y)=x_{n}^{1}(x)=x_{n}^{1}(w)$,

\noindent where $f(x)=y$ and $x=z+w$ is the unique presentation of $x\in
X=N(f)\overset{\cdot }{+}W$. It implies that, for each $y\in Y$ and all $%
x=z+w\in X$, such that $f(x)=y$,

$\lim (y_{n}^{1}(y))=\lim (x_{n}^{1}(x))=x^{1}(x)=x^{1}(w)$

\noindent holds true. Consequently, by putting

$y^{1}:Y\rightarrow F$, $y^{1}(y)=\lim (y_{n}^{1}(y))$,

\noindent a certain function is well defined. Moreover, $y^{1}$ is linear,
because it is a \textquotedblleft copy\textquotedblright\ of the restriction 
$x^{1}|W$, and $y^{1}f=x^{1}$ obviously holds. It remains to prove that $%
y^{1}$ is continuous. Let $O$ be an open neighborhood of $0\in F$. Since $%
x^{1}$ is continuous, there exists an open neighborhood $U$ of $\theta
_{X}\in X$ such that $x^{1}[U]\subseteq O$. Then $V\equiv f[U]$ is an open
neighborhood of $\theta _{Y}\in Y$, because $f$ is open, and

$y^{1}[V]=(y^{1}f)[U]=x^{1}[U]\subseteq O$.

\noindent Thus, $y^{1}$ is continuous, and hence $y^{1}\in Y^{\ast }$. Since 
$D(f)(y^{1})=y^{1}f=x^{1}$, the additional statement is proven.

\noindent Assume that $f:X\rightarrow Y$ is an open or closed monomorphism
or an embedding. Then $f$ admits the factorization

$%
\begin{array}{ccccc}
X & \overset{f^{\prime }}{\rightarrow } & f[X] & \overset{i}{\hookrightarrow 
} & Y%
\end{array}%
$, $f^{\prime }(x)=f(x)$,

\noindent where $f^{\prime }$ is an isomorphism onto the subspace $%
f[X]\trianglelefteq Y$, and $i$ is the inclusion. Given an $x^{1}\in X^{\ast
}$, put $y_{x^{1}}^{1}=x^{1}f^{\prime -1}\in f[X]^{\ast }$. By the
Hahn-Banach theorem, there exists an extension $y^{1}\in Y^{\ast }$ of $%
y_{x^{1}}^{1}$, i.e., $y^{1}i=y_{x^{1}}^{1}$. Then

$D(f)(y^{1})=D(if^{\prime })(y^{1})=(D(f^{\prime })D(i))(y^{1})=D(f^{\prime
})(D(i)(y^{1}))=$

$=D(f^{\prime })(y^{1}i)=D(f^{\prime
})(y_{x^{1}}^{1})=y_{x^{1}}^{1}f^{\prime }=x^{1}f^{\prime -1}f^{\prime
}=x^{1}$,

\noindent implying that $D(f):Y^{\ast }\rightarrow X^{\ast }$ is an
epimorphism of the underlying abelian groups, and consequently, it is an
epimorphism of $\mathcal{B}_{F}\subseteq \mathcal{N}_{F}$. Now, by
Open-mapping theorem, $D(f)$ is open. Finally, if $f$ is an isometric
isomorphism, then $D(f)$ is an isomorphism and, for every $y^{1}\in D(Y)$,

$\left\Vert D(f)(y^{1})\right\Vert =\left\Vert y^{1}f\right\Vert =\sup
\{\left\vert y^{1}(f(x))\right\vert \mid x\in X,\left\Vert x\right\Vert
=1\}= $

$=\sup \{\left\vert y^{1}(y)\right\vert \mid y\in Y,\left\Vert y\right\Vert
=\left\Vert f(x)\right\Vert =\left\Vert x\right\Vert =1\}=\left\Vert
y^{1}\right\Vert $.

\noindent Hence, $D(f)$ is an isometry as well. The statements concerning $%
D^{2}$ follow by $D^{2}(f)=D(D(f))$ and $D^{2}(f)j_{X}=j_{Y}f$, where $j_{X}$
and $j_{Y}$ are the (isometric) canonical embeddings.

\noindent (ii). Assume that $f\in \mathcal{B}_{F}(X,Y)$ is an epimorphism.
By Open-mapping theorem, $f$ is open as well. Then, by (i), $D(f)$ is a
closed monomorphism.

\noindent Assume that $f\in \mathcal{B}_{F}(X,Y)$ is a monomorphism having
the range $R(f)$ closed in $Y$. Then, as previously,

$f^{\prime }:X\rightarrow R(f)$, $f^{\prime }(x)=f(x)$,

\noindent is a continuous bijection of Banach spaces, and thus, an
isomorphism. It follows that $f$ is a closed monomorphism. Then, by (i), $%
D(f)$ is an (open) epimorphism. If, in addition, $f$ is an isometry, then $f$
preserves Cauchy sequences, and one readily verifies that $D(f)$ maps the
sets closed in $D(Y)$ into sets closed in $D(X)$. The statements concerning $%
D^{2}|\mathcal{B}_{F}$ follow by those concerning $D|\mathcal{B}_{F}$.

\noindent (iii). Consider the range

$R((D^{2})_{Y}^{X})\equiv (D^{2})_{Y}^{X}[L((X,Y))]\trianglelefteq
L(D^{2}(X),D^{2}(Y))$

\noindent and the function

$u:R((D^{2})_{Y}^{X})\rightarrow D^{2}(L(X,Y))$

\noindent well defined by $u(D^{2}(f))=f_{f}^{2}$ such that, for each $%
f^{1}\in D(L(X,Y))$, $f_{f}^{2}(f^{1})=f^{1}(f)$. One readily sees that $u$
is linear and continuous, and that $j_{L(X,Y)}=u(D^{2})_{Y}^{X}$.

\noindent Finally, let $Y$ be a Banach space, and let $C\subseteq L(X,Y)$ be
a closed set. Let $(g_{n})$ be sequence in $D[C]$ that converges in $%
L(D(Y),D(X))$, i.e., there exists $\lim (g_{n})\equiv g\in L(D(Y),D(X))$.
Since $D_{Y}^{X}$ is a linear isometry, it is a monomorphism, and there
exists a unique Cauchy sequence $(f_{n})$ in $C$ such that, for each $n\in 
\mathbb{N}$, $D(f_{n})=g_{n}$. Notice that $L(X,Y)$ is a Banach space
because such is $Y$, and thus, there exists $\lim (f_{n})\equiv f\in L(X,Y)$%
. Since $C\subseteq L(X,Y)$ is closed, it follows that $f\in C$. Then $%
D(f)\in D[C]$, and the continuity implies that $D(f)=g$, which completes the
proof.
\end{proof}

It is well known that there are Banach spaces (for instance $l_{1}$ and $%
c_{0}$) that are not \emph{isometrically isomorphic} to any of the dual
normed spaces. We shall now prove that it holds in general, i.e., without
\textquotedblleft isometrically\textquotedblright .

\begin{lemma}
\label{L2}(i) If a normed space $X$ is isomorphic to a dual space of a
normed space, then $X$ is a Banach space, the canonical embedding $%
j_{X}:X\rightarrow D^{2}(X)$ is a section (of $\mathcal{B}_{F}$) and $%
X\equiv R(j_{X})$ admits a closed direct complement in $D^{2}(X)$.

\noindent (ii) The (codomain restriction) functor $D:\mathcal{N}%
_{F}\rightarrow \mathcal{B}_{F}$ is not surjective onto the object class of
any skeleton of $\mathcal{B}_{F}$, i.e.,

$(\exists X\in Ob\mathcal{B}_{F})(\forall Y\in Ob\mathcal{N}_{F})$ $X\ncong
D(Y)$.
\end{lemma}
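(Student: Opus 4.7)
The plan is to extract (i) from the standard triangle identity for the canonical double-dual embedding, transferring it across the isomorphism $X\cong D(Y)$ via the naturality of $j$, and then to derive (ii) from (i) by invoking a classical non-complementation theorem.

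For (i), assume $\varphi:X\to D(Y)$ is an isomorphism in $\mathcal{N}_{F}$. Since every dual space lies in $\mathcal{B}_{F}$, the space $X$ is automatically a Banach space. The central observation is that for every normed space $Z$, the canonical embedding $j_{D(Z)}:D(Z)\to D^{3}(Z)$ has the bounded linear left inverse $D(j_{Z}):D^{3}(Z)\to D(Z)$. A one-line computation settles this: for $z^{1}\in D(Z)$ and $z\in Z$,
\[
\bigl(D(j_{Z})\circ j_{D(Z)}\bigr)(z^{1})(z)=\bigl(j_{D(Z)}(z^{1})\circ j_{Z}\bigr)(z)=j_{Z}(z)(z^{1})=z^{1}(z),
\]
so $D(j_{Z})\circ j_{D(Z)}=1_{D(Z)}$. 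Applying this with $Z=Y$ and combining with the naturality identity $j_{D(Y)}\circ\varphi=D^{2}(\varphi)\circ j_{X}$ established in the excerpt, I obtain $j_{X}=D^{2}(\varphi)^{-1}\circ j_{D(Y)}\circ\varphi$, where $D^{2}(\varphi)$ is invertible by (covariant) functoriality. Setting $r:=\varphi^{-1}\circ D(j_{Y})\circ D^{2}(\varphi)$, a direct substitution gives $r\circ j_{X}=1_{X}$, so $j_{X}$ is a section in $\mathcal{B}_{F}$. The bounded idempotent $j_{X}\circ r$ then splits $D^{2}(X)$ as the topological direct sum $R(j_{X})\oplus\ker(r)$: the range $R(j_{X})$ is closed since $X$ is complete and $j_{X}$ is an isometric embedding, while $\ker(r)$ is closed by continuity of $r$.

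For (ii), I would argue by contradiction. If the functor $D$ were surjective onto the object class of some skeleton of $\mathcal{B}_{F}$, then every Banach space would be isomorphic to a dual, and therefore, by (i), would admit a closed direct complement in its second dual under the canonical embedding. However, by the classical theorem of Phillips the space $c_{0}$ is \emph{not} topologically complemented in $\ell_{\infty}\cong D^{2}(c_{0})$. Hence $c_{0}\not\cong D(Y)$ for any normed space $Y$, which supplies the required witness.

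The only nontrivial ingredient beyond Lemma \ref{L1} and routine categorical manipulation is the triangle identity $D(j_{Z})\circ j_{D(Z)}=1_{D(Z)}$, which is itself immediate from the definition of $j$. Consequently the mathematical weight of the argument is concentrated in (ii), where the real obstacle is invoking (and citing) the external non-complementation result for $c_{0}$ in $\ell_{\infty}$; everything else is formal.
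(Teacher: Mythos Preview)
Your proof is correct and, for part (i), follows the paper's argument essentially verbatim: the triangle identity $D(j_{Z})\circ j_{D(Z)}=1_{D(Z)}$, the naturality square for $j$, and the transported retraction $r=\varphi^{-1}\circ D(j_{Y})\circ D^{2}(\varphi)$ are exactly what the paper uses.

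For part (ii) your route is slightly more direct than the paper's. You apply (i) immediately to $c_{0}$ and invoke Phillips' theorem that $c_{0}$ is uncomplemented in $\ell_{\infty}\cong D^{2}(c_{0})$, obtaining the witness in one step. The paper instead first argues that, under the contradiction hypothesis, \emph{every} closed intermediate subspace $c_{0}\trianglelefteq Z\trianglelefteq \ell_{\infty}$ would be complemented in $\ell_{\infty}$ (via the retraction $r_{Z}:D^{2}(Z)\to Z$ restricted along an identification $D^{2}(c_{0})\subseteq D^{2}(Z)$), and only then specializes to an uncomplemented $Z$. Your version avoids that detour and its attendant identifications; the trade-off is that the paper's intermediate claim, if one grants it, is a marginally stronger structural consequence of the hypothesis, but both arguments rest on the same external non-complementation fact.
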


\begin{proof}
(i). Let $X$ be a normed space such that $X\cong D^{n}(Y)$ for some $Y\in Ob%
\mathcal{N}_{F}$ and $n\in \mathbb{N}$. Since $D^{n}(Y)=D(D^{n-1}(Y))$, one
may assume that $n=1$. Let $f:X\rightarrow D(Y)$ be an isomorphism of $%
\mathcal{N}_{F}$. Then $D^{2}(f):D^{2}(X)\rightarrow D^{3}(Y)$ is an
isomorphism of $\mathcal{B}_{F}\subseteq \mathcal{N}_{F}$ and the diagram

$%
\begin{array}{ccc}
X & \overset{f}{\rightarrow } & D(Y) \\ 
j_{X}\downarrow &  & \downarrow j_{D(Y)} \\ 
D^{2}(X) & \underset{D^{2}(f)}{\rightarrow } & D^{3}(Y)%
\end{array}%
$

\noindent in $\mathcal{N}_{F}$ commutes. We are to prove that $%
j_{D(Y)}:D(Y)\rightarrow D^{3}(Y)$ is a section (of $\mathcal{B}_{F})$
having $D(j_{Y}):D^{3}(Y)\rightarrow D(Y)$ for a corresponding retraction.
Recall that $j_{Y}:Y\rightarrow D^{2}(Y)$ is defined by $%
j_{Y}(y^{0})=y_{y^{0}}^{2}$, $y^{0}\in Y$, such that, for every $y^{1}\in
D(Y)$, $y_{y^{0}}^{2}(y^{1})=y^{1}(y^{0})$. Further,

$D(j_{Y}):D(D^{2}(Y))=D^{3}(Y)\rightarrow D(Y)$

\noindent is determined by $D(j_{Y})(y^{3})=y^{3}j_{Y}$, $y^{3}\in D^{3}(Y)$%
. In the same way, the canonical embedding

$j_{D(Y)}:D(Y)\rightarrow D^{2}(D(Y))=D^{3}(Y)=D(D^{2}(Y))$

\noindent is determined by $j_{D(Y}(y^{1})=y_{y^{1}}^{3}$, $y^{1}\in D(Y)$,
such that, for every $y^{2}\in D^{2}(Y)$, $y_{y^{1}}^{3}(y^{2})=y^{2}(y^{1})$%
. Then, for every $y^{1}\in D(Y)$,

$%
(D(j_{Y})j_{D(Y)})(y^{1})=D(j_{Y})(j_{D(Y)}(y^{1}))=D(j_{Y})(y_{y^{1}}^{3})=y_{y^{1}}^{3}j_{Y} 
$.

\noindent Since, in addition, for every $y^{0}\in Y$,

$%
(y_{y^{1}}^{3}j_{Y})(y^{0})=y_{y^{1}}^{3}(j_{Y}(y^{0}))=y_{y^{1}}^{3}(y_{y^{0}}^{2})=y_{y^{0}}^{2}(y^{1})=y^{1}(y^{0}) 
$,

\noindent it follows that, for every $y^{1}\in D(Y)$, $%
y_{y^{1}}^{3}j_{Y}=y^{1}$ holds, and therefore $D(j_{Y})j_{D(Y)}=1_{D(Y)}$.
This proves the claim. (Notice that $j_{D(Y)}D(j_{Y}):D^{3}(Y)\rightarrow
D^{3}(Y)$ is a projection of norm $1$.) Put

$r_{X}:D^{2}(X)\rightarrow X$, $r_{X}=f^{-1}D(j_{Y})D^{2}(f)$.

\noindent Then one readily verifies that $r_{X}$ is a retraction of $%
\mathcal{N}_{F}$ having $j_{X}$ for a corresponding section, i.e., $%
r_{X}j_{X}=1_{X}$. Consequently, $R(j_{X})\trianglelefteq D^{2}(X)$ is a
retract of $D^{2}(X)$, and thus, a closed subspace, implying that it is a
Banach space. Since the canonical embedding $j_{X}$ is an isometry, it
follows that $X$ is a Banach space as well. Then, clearly, $j_{X}$ and $%
r_{X} $ belong to $\mathcal{B}_{F.}F$Further, notice that the morphism

$p_{X}\equiv j_{X}r_{X}:D^{2}(X)\rightarrow D^{2}(X)$

\noindent is a continuous linear projection ($p_{X}^{2}=p_{X}$) onto $%
R(j_{X})$. Therefore, the Banach space $X$, identified with $j_{X}[X]\equiv
R(j_{X})$, admits a closed direct complement in $D^{2}(X)$. (Notice that $%
\left\Vert p_{X}\right\Vert =\left\Vert r_{X}\right\Vert =1$ regardless to $%
\left\Vert f\right\Vert $.)

\noindent (ii). Assume to the contrary, i.e., that every Banach space is
isomorphic to the dual space of a normed space. Then, since the dual of a
space equals to the dual of its Banach completion, every Banach space would
be isomorphic to the dual of a Banach space. Further, by iteration, every
Banach space would isomorphic to the second dual of a Banach space. Let $X$
be a non-bidual-like Banach space, i.e., $D^{2}(X)\not\cong X$ (see [17],
Lemma 4. (ii)). Consider any closed subspace $Z\trianglelefteq D^{2}(X)$
such that $X\equiv R(j_{X})\trianglelefteq Z\trianglelefteq D^{2}(X)$, and
denote by $i:X\hookrightarrow Z$ the inclusion. Then we may assume that $%
D^{2}(X)\trianglelefteq D^{2}(Z)$ as well. By (i), there exists a retraction 
$r_{Z}$ corresponding to the canonical embedding $j_{Z}$, i.e.,

$r_{Z}:D^{2}(Z)\rightarrow Z$, $r_{Z}j_{Z}=1_{Z}$.

\noindent Then the domain restriction

$r\equiv r_{Z}|D^{2}(X):D^{2}(X)\rightarrow Z$, $ri=1_{Z}$,

\noindent is a (continuous linear) retraction of $D^{2}(X)$ onto the
subspace $Z$, implying that

$p\equiv ir:D^{2}(X)\rightarrow D^{2}(X)$

\noindent is a continuous linear projection ($p^{2}=p$) along $N(p)=N(r)$
onto $R(p)=R(r)=Z$. This implies that every such $Z$ admits a closed direct
complement in $D^{2}(X)$. Finally, in order to get a contradiction, an
appropriate pair $X$, $Z$ of concrete Banach spaces is needed. Let $X=c_{0}$
(the subspace of $l_{\infty }$ consisting of all null-convergent sequences
in $F$). Recall that $c_{0}$ is not bidual-like because of $%
D^{2}(c_{0})\cong l_{\infty }\not\cong c_{0}$. Namely, there are (isometric)
isomorphisms $D(c_{0})\cong l_{1}$ and $D(l_{1})\cong l_{\infty }$. However,
it is well known that there is a closed subspace $Z\trianglelefteq l_{\infty
}$, $c_{0}\trianglelefteq Z\trianglelefteq l_{\infty }\cong D^{2}(c_{0})$,
which does not admit any closed direct complement in $l_{\infty }$ - a
contradiction. This completes the proof.
\end{proof}

\begin{remark}
\label{R1}(i) Though, by Lemma 2 (ii), there are Banach spaces that are not
isomorphic to any of the dual spaces, we do not know whether every Banach
space is a retract of its second dual space (the converse of Lemma 2 (i)
(?)).

\noindent (ii) Since $D(F^{n})\cong F^{n}$ and, for all $1<p,q<\infty $ such
that $p^{-1}+q^{-1}=1$,

$D(F_{0}^{\mathbb{N}},\left\Vert \cdot \right\Vert _{p})=D(Cl_{l_{p}}(F_{0}^{%
\mathbb{N}},\left\Vert \cdot \right\Vert _{p}))=D(l_{p})\cong l_{q}$,

$D(l_{1})\cong l_{\infty }$ \quad and

$Cl_{l_{\infty }}(F_{0}^{\mathbb{N}},\left\Vert \cdot \right\Vert _{\infty
}))=c_{0}$, \quad implying

$D(F_{0}^{\mathbb{N}},\left\Vert \cdot \right\Vert _{\infty })=D(c_{0})\cong
l_{1}$

\noindent (see Lemma 4.1 (i) of [16]), the following (fundamental) question
occurs; Does the functor $D$ rise an uncountably infinite algebraic
dimension? In the next section we answer the question in negative.
\end{remark}

Lemma 2 motivates the following consideration. Given a normed space $X$ and
a $k\in \{0\}\cup \mathbb{N}$, let us denote by $j_{k,X}\equiv
j_{D^{k}(X)}:D^{k}(X)\rightarrow D^{k+2}(X)$ the canonical embedding ($%
D^{0}=!_{\mathcal{N}_{F}}$). Then the class $\{j_{k,X}\mid X\in Ob(\mathcal{N%
}_{F})\}$ determine a natural transformation $j_{k}:D^{k}\rightsquigarrow
D^{k+2}$ of the functors. When there is no ambiguity, i.e., when a normed
space $X$ is fixed, we simplify the notation $j_{k,X}$ to $j_{k}$. Notice
that, for a given $X\in Ob(\mathcal{N}_{F})$, the following morphisms of $%
\mathcal{B}_{F}\subseteq \mathcal{N}_{F}$ occur:

$D(X)\overset{j_{1}}{\rightarrow }D^{3}(X)\overset{D(j_{0})}{\rightarrow }%
D(X)$,

$D^{2}(X)%
\begin{array}{c}
\overset{j_{2}}{\rightarrow } \\ 
\underset{D^{2}(j_{0})}{\rightarrow }%
\end{array}%
D^{4}(X)\overset{D(j_{1})}{\rightarrow }D^{2}(X)$,

$D^{3}(X)%
\begin{array}{c}
\overset{j_{3}}{\rightarrow } \\ 
\underset{D^{2}(j_{1})}{\rightarrow }%
\end{array}%
D^{5}(X)%
\begin{array}{c}
\overset{D^{3}(j_{0})}{\rightarrow } \\ 
\underset{D(j_{2})}{\rightarrow }%
\end{array}%
D^{3}(X)$,

\noindent and, generally, for every $k\in \{0\}\cup \mathbb{N}$ and each $l$%
, $0\leq l\leq k$,

$D^{2k+1}(X)\overset{D^{2k-2l}(j_{2l+1})}{\rightarrow }D^{2k+3}(X)\overset{%
D^{2l+1-2l}(j_{2l})}{\rightarrow }D^{2k+1}(X)$,

$D^{2k+2}(X)\overset{D^{2k-2l}(j_{2l+2})}{\rightarrow }D^{2k+4}(X)\overset{%
D^{2k+1-2l}(j_{2l+1})}{\rightarrow }D^{2k+2}(X)$.

\noindent Let, for each $k$, $S_{2k+1}(X)$ be the set of all $%
D^{2k-2l}(j_{2l+l})\in L(D^{2k+1}(X),D^{2k+3}(X))$, and let $R_{2k+1}(X)$ be
the set of all $D^{2k+1-2l}(j_{2l})\in L(D^{2k+3}(X),D^{2k+1}(X))$, $0\leq
l\leq k$. Similarly, let $S_{2k+2}(X)$ be the set of all $%
D^{2k-2l}(j_{2l+2})\in L(D^{2k+2}(X),D^{2k+4}(X))$, and let $R_{2k+2}(X)$ be
the set of all $D^{2k+1-2l}(j_{2l+1})\in L(D^{2k+4}(X),D^{2k+2}(X))$, $0\leq
l\leq k$. Hence, for each $n\in \mathbb{N}$, the sets $S_{n}(X)$ and $%
R_{n}(X)$ are well defined. By Lemma 1, since all $j_{k}$ are isometries,
all the morphisms belonging to $S_{n}\cup R_{n}$ have norm $1$.

\begin{theorem}
\label{T1}Let $X$ be a normed space and let $n\in \mathbb{N}$. Then

\noindent (i) each $s\in S_{n}(X)$ is a (category) section, and each $r\in
R_{n}(X)$ is a (category) retraction;

\noindent (ii) $(\forall s\in S_{n}(X))(\exists r_{s}\in R_{n}(X))$ $%
r_{s}s=1_{D^{n}(X)}$;

\noindent (iii) $(\forall r\in R_{n}(X))(\exists s_{r}\in S_{n}(X))$ $%
rs_{r}=1_{D^{n}(X)}$;

\noindent (iv) $(\forall n\geq 3)(\exists s\in S_{n}(X))(\exists r\in
R_{n}(X))$ $rs$ is not an epimorphism (especially, $rs\neq 1_{D^{n}(X)\text{.%
}}$).
\end{theorem}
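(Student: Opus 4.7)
My plan is to deduce everything from a single identity already verified inside the proof of Lemma \ref{L2}: for every normed space $Y$, one has $D(j_{Y})\circ j_{D(Y)}=1_{D(Y)}$. First, specialising at $Y=D^{k}(X)$ for each $k\geq 0$ this will read
\[
D(j_{k})\circ j_{k+1}=1_{D^{k+1}(X)},
\]
and then applying the covariant iterated dual $D^{2p}$ to both sides (using $D^{2p}\circ D=D^{2p+1}$) will yield, for every $k,p\geq 0$,
\[
D^{2p+1}(j_{k})\circ D^{2p}(j_{k+1})=1_{D^{k+1+2p}(X)}.
\]
A routine index chase will then match the two factors with the definitions of $S_{n}(X)$ and $R_{n}(X)$: in the odd case $n=2k+1$, the substitution $k\mapsto 2l$, $p\mapsto k-l$ identifies the displayed pair with $(D^{2k+1-2l}(j_{2l}),\,D^{2k-2l}(j_{2l+1}))$, pairing the $l$-th element of $R_{n}(X)$ with the $l$-th element of $S_{n}(X)$; the even case $n=2k+2$ is entirely parallel. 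This provides the required $r_{s}$ in (ii) and $s_{r}$ in (iii), and consequently every member of $S_{n}(X)$ is a section and every member of $R_{n}(X)$ is a retraction, which is (i).

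For (iv) I would pick an intentionally \emph{mismatched} pair using the ``extreme'' indices. In the odd case $n=2k+1$, I take $s=j_{n}\in S_{n}(X)$ (the $l=k$ element) together with $r=D^{n}(j_{0})\in R_{n}(X)$ (the $l=0$ element). Applying the naturality of $j\colon 1_{\mathcal{N}_{F}}\rightsquigarrow D^{2}$ to the morphism $D^{n-2}(j_{0})\colon D^{n}(X)\to D^{n-2}(X)$ should yield
\[
D^{n}(j_{0})\circ j_{n}=j_{n-2}\circ D^{n-2}(j_{0}),
\]
so that the range of $rs$ is contained in $R(j_{n-2})\trianglelefteq D^{n}(X)$. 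Since $j_{n-2}$ is an isometric embedding of the Banach space $D^{n-2}(X)$, the range $R(j_{n-2})$ is a closed subspace of $D^{n}(X)$; whenever $D^{n-2}(X)$ fails to be reflexive (equivalently, whenever $X$ itself is non-reflexive, since reflexivity is preserved and detected under dualisation) this subspace is \emph{proper} in $D^{n}(X)$ and therefore not dense, so $rs$ fails to be an epimorphism of $\mathcal{N}_{F}$. The even case $n=2k+2$ will be handled by the analogous pair $s=j_{n}$, $r=D^{n-1}(j_{1})$; the same naturality slide will rewrite $rs$ as $j_{n-2}\circ D^{n-3}(j_{1})$, again trapped inside the proper closed subspace $R(j_{n-2})$.

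The principal difficulty is bookkeeping: one must keep track of the alternating variance of $D^{m}$ (contravariant for odd $m$, covariant for even $m$) in order to verify the matching identities in (i)--(iii), and in (iv) to derive the naturality slide $D^{n}(j_{0})\,j_{n}=j_{n-2}\,D^{n-2}(j_{0})$ with the correct domains, codomains, and direction of each arrow. Beyond Lemma \ref{L2}, the naturality of the canonical transformation $j$, and the general fact that the isometric image of a Banach space is closed, no further analytic input is required.
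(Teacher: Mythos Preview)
Your proposal is correct and follows essentially the same route as the paper. For (i)--(iii) you specialise the identity $D(j_{Y})j_{D(Y)}=1_{D(Y)}$ from Lemma~\ref{L2} at $Y=D^{k}(X)$ and then apply the covariant functor $D^{2p}$, which is exactly what the paper does (the paper phrases the first step as ``viewing $D^{n}(X)$ as $D(D^{n-1}(X))$'' and then applies $D^{2k-2l}$). For (iv) you use the naturality of $j$ to factor $rs$ through $j_{n-2}$, precisely as the paper does for $n=3$ (obtaining $D^{3}(j_{0})j_{3}=j_{1}D(j_{0})$) and then ``similarly'' for $n=4$; your explicit treatment of the even case and your remark that the conclusion requires $X$ to be non-reflexive are in fact a bit more careful than the paper's own ``in general, $j_{1}$ is not an epimorphism''.
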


\begin{proof}
Let $X$ be a normed space. Since, for every $k\in \{0\}\cup \mathbb{N}$, the
canonical morphism $j_{k}:D^{k}(X)\rightarrow D^{k+2}(X)$ ($D^{0}=1_{%
\mathcal{N}_{F}}$) is an isometric embedding, Lemma 1 implies that $%
D(j_{k}):D^{k+3}(X)\rightarrow D^{k+1}(X)$ is an (open) epimorphism.
Statements (i) and (ii) can be proved by \textquotedblleft
parallel\textquotedblright\ induction on $2n-1$ and on $2n$. Nevertheless,
we provide an explicit proof. Let $n=1\in \mathbb{N}$. By Lemma 2, $%
D(j_{0})j_{1}=1_{D(X)}$, i.e., $j_{1}[D(X)]$ is a retract of $D^{3}(X)$ with
the retraction $D(j_{0})$ and the corresponding section $j_{1}$. Let $n\geq
2.$ Since $D$ is a contravariant functor, it follows that

$D(j_{1})D^{2}(j_{0})=D(D(j_{0})j_{1})=D(1_{D(X)})=1_{D^{2}(X)}$.

\noindent Therefore, $D^{2}(j_{0})[D^{2}(X)]$ is a retract of $D^{4}(X)$
with the retraction $D(j_{1})$ having $D^{2}(j_{0})$ for a corresponding
section. In general, by considering $D^{n}(X)$ as $D(D^{n-1}(X))$, i.e., the
canonical embedding $j_{n}:D^{n}(X)\rightarrow D^{n+2}(X)$ as
\textquotedblleft $j_{1}:D(D^{n-1}(X)\rightarrow D^{3}(D^{n+1}(X))$%
\textquotedblright , and $D(j_{n-1}):D^{n+2}(X)\rightarrow D^{n}(X)$ as
\textquotedblleft $D(j_{0}):D^{3}(D^{n-1}(X))\rightarrow D(D^{n-1}(X))$%
\textquotedblright , one proves (by mimicking the appropriate part of the
proof of Lemma 2) that

$D(j_{n-1})j_{n}=1_{D^{n}(X)}$.

\noindent holds true. Thus, for every $n\in \mathbb{N}$, $j_{n}[D^{n}(X)]$
is a retract of $D^{n+2}(X)$ with the retraction $D(j_{n-1})$ having $j_{n}$
for a corresponding section. Further, since $D^{2}$ is a (covariant)
functor, one readily verifies that, for every $k\in \mathbb{N}$ and every $%
l\in \{0,\ldots ,k\}$,

$%
D^{2k+1-2l}(j_{2l})D^{2k-2l}(j_{2l+1)}=D^{2k-2l}(D(j_{2l})j_{2l+1})=D^{2k-2l}(1_{D^{2l+1}(X)})=1_{D^{2k+1}(X)} 
$,

$%
D^{2k+1-2l}(j_{2l+1})D^{2k-2l}(j_{2l+2})=D^{2k-2l}(D(j_{2l+1})j_{2l+2})=D^{2k-2l}(1_{D^{2l+2}(X)})=1_{D^{2k+2}(X)} 
$.

\noindent This shows that all $D^{2k-2l}(j_{2l+1})$ and $D^{2k-2l}(j_{2l+2})$
are sections having $D^{2l+1-2l}(j_{2l})$ and $D^{2l+1-2l}(j_{2l+1})$ for
the corresponding retractions, respectively, and vice versa. Therefore,
statements (i), (ii) and (iii) hold true.

\noindent Concerning statement (iv), let firstly $n=3$. We are to show that,
for the section $s=j_{3}:D^{3}(X)\rightarrow D^{5}(X)$ and the retraction $%
r=D^{3}(j_{0}):D^{5}(X)\rightarrow D^{3}(X)$, the composite

$rs=D^{3}(j_{0})j_{3}:D^{3}(X)\rightarrow D^{3}(X)$

\noindent may be not an epimorphism. Consider the following diagram

$%
\begin{array}{ccc}
D^{3}(X) & \overset{D(j_{0})}{\rightarrow } & D(X) \\ 
j_{3}\downarrow &  & \downarrow j_{1} \\ 
D^{5}(X) & \underset{D^{3}(j_{0})}{\rightarrow } & D^{3}(X)%
\end{array}%
$

\noindent in $\mathcal{B}_{F}\subseteq \mathcal{N}_{F}$. Since $D^{3}=D^{2}D$%
, $D^{5}=D^{2}D^{3}$, $j_{1}=j_{D(X)}$, $j_{3}=j_{D^{3}(X)}$ and $j:1_{%
\mathcal{N}_{F}}\rightsquigarrow D^{2}$ is a natural transformation of the
functors, the diagram commutes, i.e., $D^{3}(j_{0})j_{3}=j_{1}D(j_{0})$.
Notice that, in general, the canonical embedding $j_{1}$ is not an
epimorphism, and the conclusion follows. If $n=4$, then one similarly proves
that, for instance, $D^{3}(j_{1})j_{4}$ is not an epimorphism. Generally, if
an $rs:D^{n}(X)\rightarrow D^{n}(X)$ factorizes through a $j_{n-2k}$, $1\leq
k\leq n-2$, then, generally, it is not an epimorphism. Thus, statement (iv)
follows.
\end{proof}

The next corollary is an immediate consequence of our Theorem 1 and the
known general facts (see Section 6 of Chapter 6 of [6].

\begin{corollary}
\label{C1}(i) For every normed space $X$ and each $n\in \mathbb{N}$, the
range $R(j_{n})$ and $the$ annihilator $R(j_{n-1})^{0}$ (of $R(j_{n-1})$
with respect to $D^{n+2}(X)$) are closed complementary subspaces of $%
D^{n+2}(X)$, i.e., by identifying $D^{n-1}(X)$ with $R(j_{n-1})$ and $%
D^{n}(X)$ with $R(j_{n})$, the closed direct-sum presentation

$D^{n+2}(X)=D^{n}(X)\dotplus D^{n-1}(X)^{0}$

\noindent holds true. Consequently, by assuming the mentioned
identifications,

$D^{2n+1}(X)\cong D(X)\dotplus X^{0}\dotplus D^{2}(X)^{0}\dotplus \cdots
\dotplus D^{2n-2}(X)^{0}$ \quad and

$D^{2n+2}(X)\cong D^{2}(X)\dotplus D(X)^{0}\dotplus D^{3}(X)^{0}\dotplus
\cdots \dotplus D^{2n-1}(X)^{0}$.

\noindent (ii) If $X$ is a normed space admitting a retraction $%
r_{0}:D^{2}(X)\rightarrow Cl(j_{0}[X])\equiv \bar{X}$ (in $\mathcal{B}_{F}$%
), then

$D^{2}(X)=\bar{X}\dotplus N(r_{0})$

\noindent is a closed direct-sum presentation of $D^{2}(X)$. If, in
addition, $X$ is a Banach space, then $D^{2}(X)=X\dotplus N(r_{0})$ is a
closed direct-sum presentation.
\end{corollary}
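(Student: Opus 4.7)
The plan is to deduce (i) as a direct consequence of Theorem 1 together with the standard identification of the kernel of an adjoint with the annihilator of the range. By Theorem 1 (i)--(iii), the canonical embedding $j_n:D^n(X)\to D^{n+2}(X)$ is a section whose corresponding retraction is $D(j_{n-1}):D^{n+2}(X)\to D^n(X)$, i.e., $D(j_{n-1})\,j_n=1_{D^n(X)}$. From this I would form the continuous linear idempotent $p_n\equiv j_n\,D(j_{n-1}):D^{n+2}(X)\to D^{n+2}(X)$, whose range equals $R(j_n)$ and whose kernel equals $N(D(j_{n-1}))$. The standard projection lemma for a Banach space (as in Section~6 of Chapter~6 of [6]) then yields the closed direct-sum decomposition
$$D^{n+2}(X)=R(j_n)\dotplus N(D(j_{n-1})).$$

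Next I would identify the kernel of $D(j_{n-1})$ with the annihilator $R(j_{n-1})^0$. By definition, $D(j_{n-1})$ sends $\varphi\in D^{n+2}(X)=(D^{n+1}(X))^{\ast}$ to $\varphi\circ j_{n-1}\in (D^{n-1}(X))^{\ast}$, so $\varphi\in N(D(j_{n-1}))$ iff $\varphi$ vanishes on $R(j_{n-1})$, i.e., iff $\varphi\in R(j_{n-1})^0$. Performing the identifications $D^{n-1}(X)\equiv R(j_{n-1})$ and $D^n(X)\equiv R(j_n)$ (legitimate because each $j_k$ is an isometric embedding) yields the asserted presentation $D^{n+2}(X)=D^n(X)\dotplus D^{n-1}(X)^0$. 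The two telescoping formulae for $D^{2n+1}(X)$ and $D^{2n+2}(X)$ then follow by a straightforward induction on $n$: at each step one decomposes the highest-degree summand via $D^{2k+1}(X)=D^{2k-1}(X)\dotplus D^{2k-2}(X)^0$ in the odd case, and via $D^{2k+2}(X)=D^{2k}(X)\dotplus D^{2k-1}(X)^0$ in the even case, terminating at $D^3(X)=D(X)\dotplus X^0$ and $D^4(X)=D^2(X)\dotplus D(X)^0$, respectively.

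For (ii), the argument is essentially the same projection trick but without appeal to Theorem 1. Given a retraction $r_0:D^2(X)\to\bar X$ with corresponding section the inclusion $i:\bar X\hookrightarrow D^2(X)$ (so $r_0 i=1_{\bar X}$), the composite $ir_0:D^2(X)\to D^2(X)$ is a continuous linear idempotent with range $\bar X$ and kernel $N(r_0)$, producing the closed direct-sum $D^2(X)=\bar X\dotplus N(r_0)$. If, in addition, $X$ is a Banach space, then the canonical embedding $j_0$ has closed range, so $\bar X=j_0[X]\equiv X$ under the canonical identification, and the final formula follows.

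I anticipate no serious obstacle: the substantive content has already been carried out in Theorem 1, and the only nontrivial residual input is the classical identity $N(T^{\ast})=R(T)^{0}$ for the adjoint of a bounded linear operator, which is available from any of the quoted functional-analysis references. If anything is delicate, it is merely bookkeeping the indices in the inductive unrolling that yields the long direct-sum presentations, which can be made uniform by induction on $n$ once the single-step decomposition is in hand.
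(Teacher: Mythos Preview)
Your proposal is correct and follows essentially the same route as the paper: form the continuous idempotent $p_{n+2}=j_{n}D(j_{n-1})$ from the section--retraction pair supplied by Theorem~1, identify its range with $R(j_{n})$ and its kernel with $N(D(j_{n-1}))=R(j_{n-1})^{0}$, then unroll by induction; for (ii) the paper likewise builds the projection $j_{0}r_{0}$ (your $ir_{0}$) and reads off $R$ and $N$. The only cosmetic difference is that in (ii) you use the inclusion $i:\bar X\hookrightarrow D^{2}(X)$ explicitly, whereas the paper writes $j_{0}r_{0}$ with an implicit identification.
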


\begin{proof}
(i). Notice that, for a given $X$ and each $n\in \mathbb{N}$,

$p_{n+2}\equiv j_{n}D(j_{n-1}):D^{n+2}(X)\rightarrow D^{n+2}(X)$

\noindent is a continuous linear projection. Indeed,

$%
p_{n+2}^{2}=(j_{n}D(j_{n-1}))(j_{n}D(j_{n-1}))=j_{n}1_{D^{n}(X)}D(j_{n-1})=j_{n}D(j_{n-1})=p_{n+2} 
$.

\noindent Since $D(j_{n-1})$ is an epimorphism. and $j_{n}$ is an isometric
embedding, it follows that

$R(p_{n+2})=R(j_{n})\cong D^{n}(X)$.

\noindent Further,

$N(p_{n+2})=N(D(j_{n-1}))=\{x^{n+2}\in D^{n+2}(X)\mid
x^{n+2}j_{n-1}=c_{0}^{n-1}\}=$

$=(j_{n-1}[D^{n-1}(X)])^{0}\equiv R(j_{n-1})^{0}$.

\noindent Now the conclusion follows by induction and the well known general
facts. (Observe that, for instance,

$p_{n+2}^{\prime }\equiv D^{2}(j_{n-2})D(j_{n-1}):D^{n+2}(X)\rightarrow
D^{n+2}(X)$, $n>1$, ,

\noindent is also a continuous linear projection yielding another closed
direct-sum presentation of $D^{n+2}(X)$.)

\noindent (ii). If $X$ admits a retraction $r_{0}:D^{2}(X)\rightarrow
Cl(j_{0}[X])\equiv \bar{X}$, then

$p_{2}\equiv j_{0}r_{0}:D^{2}(X)\rightarrow D^{2}(X)$

\noindent is a continuous linear projection. Since the both $%
R(p_{2})=Cl(R(j_{0}))\equiv \bar{X}$ and $N(p_{2})=N(r_{0})$ are closed in $%
D^{2}(X)$, the stated closed direct-sum presentation follows. If such an $X$
is a Banach space, one may identify $X\equiv \bar{X}\subseteq D^{2}(X)$, and
the conclusion follows.
\end{proof}

\begin{example}
\label{E1}Recall that $D(c)\cong l_{1}\cong D(c_{0})$ and $D(l_{1})\cong
l_{\infty }$. Then, by Corollary 1, $D(l_{\infty })\cong l_{1}\dotplus
c^{0}\cong l_{1}\dotplus c_{0}^{0}$. This also shows that the annihilator
does not preserve separability of a subspace.
\end{example}

By the mentioned identifications, Corollary 1 shows that $%
D^{n+2}(X)/D^{n}(X) $ is (isometrically) isomorphic to $D^{n-1}(X)^{0}$.
Further, it is well known that $D(X)/Z^{0}$, $Z\trianglelefteq X$, is
isometrically isomorphic to $D(Z)$, and that, for Banach spaces, $D(X/Z)$ is
isometrically isomorphic to $Z^{0}$. These facts and Theorem 1 aim our
attention at the behavior of the iterated dual functors on a quotient space
(see also [6], Chapter 6., Sections 5. and 6.).

\begin{lemma}
\label{L3}Let $Z$ be a closed subspace of a normed space $X$, and denote by $%
i:Z\hookrightarrow X$ the inclusion, and by $q:X\rightarrow X/Z$ the
quotient morphism. Then, for every $n\in \mathbb{N}$, the short sequences

$D^{2n-1}(Z)\overset{D^{2n-1}(i)}{\leftarrow }D^{2n-1}(X)\overset{D^{2n-1}(q)%
}{\leftarrow }D^{2n-1}(X/Z)$ \quad and

$D^{2n}(Z)\overset{D^{2n}(i)}{\rightarrow }D^{2n}(X)\overset{D^{2n}(q)}{%
\rightarrow }D^{2n}(X/Z)$

\noindent in $\mathcal{B}_{F}$ are exact.
\end{lemma}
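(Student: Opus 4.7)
The plan is to argue by induction on $n$, with the base case $n=1$ reducing to the standard annihilator--quotient duality and each inductive step being a single application of Lemma~\ref{L1}. For the base case, consider the odd sequence $D(X/Z) \xrightarrow{D(q)} D(X) \xrightarrow{D(i)} D(Z)$. The inclusion $R(D(q)) \subseteq N(D(i))$ is immediate from $q \circ i = 0$ and functoriality. For the reverse inclusion, take $\phi \in N(D(i))$, that is, $\phi \in X^\ast$ with $\phi|_Z = 0$. Since $N(q) = Z$ and $q$ is a surjection, $\phi$ descends uniquely to a linear $\widetilde{\phi} : X/Z \to F$ with $\phi = \widetilde{\phi} \circ q$; continuity of $\widetilde{\phi}$ follows from the quotient--norm estimate $\Vert\widetilde{\phi}\Vert \le \Vert\phi\Vert$ (equivalently, from openness of $q$). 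Hence $\phi = D(q)(\widetilde{\phi}) \in R(D(q))$, giving exactness at $D(X)$.

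The key observation for the induction is that the base case actually delivers more than middle exactness. By Lemma~\ref{L1}, $D(q)$ is a closed (isometric) monomorphism and $D(i)$ is an open epimorphism, so the extended sequence $0 \to D(X/Z) \xrightarrow{D(q)} D(X) \xrightarrow{D(i)} D(Z) \to 0$ is a short exact sequence in $\mathcal{B}_F$ of exactly the same formal type as the original $0 \to Z \xrightarrow{i} X \xrightarrow{q} X/Z \to 0$, namely ``closed monomorphism followed by open epimorphism.'' This structural persistence is what lets the induction proceed: assuming such a short exact sequence $0 \to A \xrightarrow{\alpha} B \xrightarrow{\beta} C \to 0$ in $\mathcal{B}_F$, one more application of $D$ together with Lemma~\ref{L1} produces $0 \to D(C) \xrightarrow{D(\beta)} D(B) \xrightarrow{D(\alpha)} D(A) \to 0$ with $D(\beta)$ a closed monomorphism, $D(\alpha)$ an open epimorphism, and middle exactness established by verbatim repetition of the base-case argument, now with $R(\alpha) \trianglelefteq B$ in place of $Z \trianglelefteq X$ and $B/R(\alpha) \cong C$ in place of $X/Z$. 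Iterating $n-1$ times beginning from the base-case short exact sequence yields the odd statement for odd $n$ and the even statement for even $n$, matching both cases of the lemma.

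The principal bookkeeping point is tracking the alternating direction of arrows under the contravariant $D$: odd iterates reverse the original arrows (yielding the contravariant sequence in the statement), while even iterates restore their direction (yielding the covariant sequence). The only non-formal input at each stage is the continuity of the descended functional, which is exactly what Lemma~\ref{L1}'s preservation of closed monomorphisms and open epimorphisms guarantees; no separate appeal to Hahn--Banach or the open-mapping theorem is needed beyond those already absorbed into Lemma~\ref{L1}. Consequently, the only real obstacle is notational, and the inductive hypothesis (``closed mono followed by open epi'') is the invariant that makes the whole argument collapse to the single base-case computation.
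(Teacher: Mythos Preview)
Your proof is correct and follows essentially the same inductive strategy as the paper's: establish exactness for $n=1$ directly via the universal property of the quotient (continuity of the descended functional coming from openness of $q$), then iterate using Lemma~\ref{L1} to see that $D$ sends open epimorphisms to closed monomorphisms and closed embeddings to open epimorphisms, so the dualized sequence has the same structural form. The only difference is that the paper outsources the inductive step to an external reference (Proposition~6.5.13 of [6]), whereas you make it self-contained by explicitly recording the invariant ``closed mono followed by open epi'' and repeating the base-case computation verbatim.
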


\begin{proof}
Clearly, the short sequence

$Z\overset{i}{\hookrightarrow }X\overset{q}{\rightarrow }X/Z$

\noindent in $\mathcal{N}_{F}$ is exact, i.e., $R(i)=N(q)$. Since $D$ is a
contravariant functor and the function $D_{X/Z}^{Z}:L(X,X/Z)\rightarrow
L(D(X/Z),D(X))$ is linear, it follows that

$D(i)D(q)=D(qi)=D(c_{\theta })=c_{\theta }^{1}$,

\noindent i.e., $R(D(q))\subseteq N(D(i))$. We are to prove that the
converse $N(D(i))\subseteq R(D(q))$ holds as well. Let $x^{1}\in
N(D(i))\subseteq X^{\ast }$, i.e., $D(i)(x^{1})=x^{1}i=c_{0}$ which implies
that $x^{1}[Z]=\{0\}$, i.e., $x^{1}\in Z^{0}$. By the universal property of
the quotient morphism $q$, there exists a continuous linear function

$w_{x^{1}}:X/Z\rightarrow F$, $w_{x^{1}}([x])\equiv w_{x^{1}}q(x)=x^{1}(x)$.

\noindent Then, clearly, $w_{x^{1}}\in (X/Z)^{\ast }$ and, moreover,

$D(q)(w_{x^{1}})=w_{x^{1}}q=x^{1}$,

\noindent implying that $x^{1}\in R(D(q))$, which proves the converse.
Hence, the short sequence

$D(Z)\overset{D(i)}{\leftarrow }D(X)\overset{D(q)}{\leftarrow }D(X/Z)$

\noindent in $\mathcal{B}_{F}$ is exact. Further, by Lemma 1, $%
D(i):D(X)\rightarrow D(Z)$ is an epimorphism., and thus, the range $%
R(D(i))=D(Z)$ is (trivially) closed in $D(Z)$. Then, by Proposition 6.5.13.
of [6], the short sequence

$D^{2}(Z)\overset{D^{2}(i)}{\rightarrow }D^{2}(X)\overset{D^{2}(q)}{%
\rightarrow }D^{2}(X/Z)$

\noindent in $\mathcal{B}_{F}$ is exact. Now, in general, by Lemma 1, $%
D^{2n}(q)$ and $D^{2n+1}(i)$ are epimorphisms, i.e., $%
R(D^{2n}(q))=D^{2n}(X/Z)$ and $R(D^{2n+1}(i))=D^{2n+1}(Z)$. (It suffices
that $R(D^{2n}(q)$ is closed in $D^{2n}(X/Z)$ and that $R(D^{2n+1}(i))$ is
closed in $D^{2n+1}(Z)$, which follows by Proposition 6.5.12. of [6].) Then
the final conclusion follows by Proposition 6.5.13. of [6].
\end{proof}

We can now state the following general facts concerning the iterated dual
functors and quotients.

\begin{theorem}
\label{T2}Let $Z$ be a closed subspace of a normed space $X$, and denote by $%
i:Z\hookrightarrow X$ the inclusion, and by $q:X\rightarrow X/Z$ the
quotient morphism. Then, for each $n\in \mathbb{N}$,

\noindent (i) the functor $D^{2n-1}$ permits cancellation on the quotient
objects, i.e.,

$D^{2n-1}(X)/D^{2n-1}(X/Z))\cong D^{2n-1}(Z)$,

\noindent where $D^{2n-1}(X/Z)$ is identified with $R(D^{2n-1}(q))$ in $%
D^{2n-1}(X)$;

\noindent (ii) the functor $D^{2n}$ \textquotedblleft
preserves\textquotedblright\ the quotient of objects, i.e.,

$D^{2n}(X/Z)\cong D^{2n}(X)/D^{2n}(Z)$,

\noindent where $D^{2n}(Z)$ is identified with $R(D^{2n}(i))$ in $D^{2n}(X)$.

\noindent (iii) $D(X/Z)\cong Z^{0}$, $D^{2n+1}(X/Z)\cong D^{2n}(Z)^{0}$, $%
D^{2n}(Z)\cong D^{2n-1}(X/Z)^{0}$,

\noindent where the mentioned identifications are assumed. Furthermore, all
the isomorphisms are isometric.
\end{theorem}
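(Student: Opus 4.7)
The plan is to combine the exactness of the dualized short sequences established in Lemma 3 with the structural information about the iterated duals from Lemma 1, and to apply the first isomorphism theorem for continuous linear maps between Banach spaces. Statement (iii) will then follow by applying $D$ once more to (i) and (ii) together with the standard annihilator identifications $D(X)/Z^{0}\cong D(Z)$ and $D(Y/W)\cong W^{0}$ (the latter valid whenever $Y$ is a Banach space, which will always be the case here since every space in sight lies in $\mathcal{B}_{F}$ after one application of $D$).

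For (i), I first note that a routine induction via Lemma 1 shows that, since $i$ is a closed isometric embedding, $D^{2n-1}(i)$ is an open epimorphism for every $n\in\mathbb{N}$, whereas $D^{2n-1}(q)$ is a closed isometric monomorphism (the odd-index duals of $i$ alternate as embedding $\mapsto$ open epimorphism $\mapsto$ closed monomorphism $\mapsto\dots$, and dually for $q$). Lemma 3 then identifies $N\bigl(D^{2n-1}(i)\bigr)$ with $R\bigl(D^{2n-1}(q)\bigr)$, which, via the isometric embedding $D^{2n-1}(q)$, is identified with $D^{2n-1}(X/Z)$ itself. The first isomorphism theorem, applied to the (open) surjection $D^{2n-1}(i)\colon D^{2n-1}(X)\to D^{2n-1}(Z)$, therefore yields the (isometric) isomorphism
\[
D^{2n-1}(X)/D^{2n-1}(X/Z)\;\cong\;D^{2n-1}(Z).
\]

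Statement (ii) is the symmetric argument on the even-index side: the same induction through Lemma 1 gives $D^{2n}(q)$ as an open epimorphism and $D^{2n}(i)$ as a closed isometric monomorphism, Lemma 3 identifies the kernel of $D^{2n}(q)$ with $R\bigl(D^{2n}(i)\bigr)\cong D^{2n}(Z)$, and the first isomorphism theorem delivers the claimed isometric isomorphism $D^{2n}(X)/D^{2n}(Z)\cong D^{2n}(X/Z)$.

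For (iii), the base case $D(X/Z)\cong Z^{0}$ is essentially already contained in the proof of Lemma 3: the isometric embedding $D(q)$ has range exactly the annihilator $Z^{0}\subseteq D(X)$. To obtain the remaining two identities I apply $D$ once more to the isomorphisms in (i) and (ii) and invoke the Banach-space identity $D(Y/W)\cong W^{0}$: taking $Y=D^{2n}(X)$ and $W=D^{2n}(Z)$ gives $D^{2n+1}(X/Z)\cong D^{2n}(Z)^{0}$, while taking $Y=D^{2n-1}(X)$ and $W=D^{2n-1}(X/Z)$ gives $D^{2n}(Z)\cong D^{2n-1}(X/Z)^{0}$. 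The main obstacle I anticipate is purely bookkeeping: at every step of the induction one must verify that the relevant dualized morphism is of exactly the required type (open epimorphism or closed isometric monomorphism), so that the first isomorphism theorem produces an \emph{isometric} rather than merely topological isomorphism, and that the annihilator identities are applied with their correct (Banach) hypotheses.
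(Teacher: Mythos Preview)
Your proposal is correct and follows essentially the same route as the paper: both arguments combine the exactness from Lemma~3 with the morphism-type information from Lemma~1, then invoke the canonical factorization through the quotient (what you call the first isomorphism theorem, and what the paper spells out via the universal property of the quotient together with the Open-mapping and Banach inverse-mapping theorems) to obtain (i) and~(ii). Your derivation of~(iii) is a mild variant---you apply $D$ to the isomorphisms in (i) and (ii) and then invoke the Banach identity $D(Y/W)\cong W^{0}$, whereas the paper reads off the annihilator identifications directly from the chain of exact sequences via $N(D^{m}(i))$ and $N(D^{m}(q))$---but the content is the same and the bookkeeping obstacle you flag (tracking that each dualized map is of the right type to yield an \emph{isometric} isomorphism) is exactly the one the paper addresses in its final paragraph.
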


\begin{proof}
Consider the short sequence

$Z\overset{i}{\hookrightarrow }X\overset{q}{\rightarrow }X/Z$, $\quad
R(i)\equiv i[Z]=N(q)$,

\noindent in $\mathcal{N}_{F}$, which is exact, $R(i)=Z=N(q)$, and $i$ is
the closed inclusion, while $q$ is an open epimorphism. By Lemma 3, for each 
$n\in \mathbb{N}$, the sequences

$D^{2n-1}(Z)\overset{D^{2n-1}(i)}{\leftarrow }D(^{2n-1}(X)\overset{%
D(^{2n-1}q)}{\leftarrow }D^{2n-1}(X/Z)$, \quad and

$D^{2n}(Z))\overset{D^{2n}(i)}{\rightarrow }D^{2n}(X)\overset{D^{2n}(q)}{%
\rightarrow }D^{2n}(X/Z)$,

\noindent in $\mathcal{B}_{F}$ are exact, i.e.,

$R(D^{2n-1}(q))\equiv D^{2n-1}(q)[D^{2n-1}(X/Z)])=N(D^{2n-1}(i))$,

$R(D^{2n}(i))\equiv D^{2n}(i)[D^{2n}(Z)]=N(D^{2n}(q))$

\noindent Observe that, by Lemma 1, the morphisms $D^{2n-1}(q)$ and $%
D^{2n}(i)$ are closed monomorphisms, while $D^{2n-1}(i)$ and $D^{2n}(q)$ are
(open) epimorphisms. By the universal property of a quotient in $\mathcal{N}%
_{F}$, there exists a unique continuous linear (canonical) factorization of $%
D^{2n-1}(i)$ trough the quotient morphism

$q_{2n-1}:D^{2n-1}(X)\rightarrow D^{2n-1}(X)/N(D^{2n-1}(i))$,

$q_{2n-1}(x^{2n-1})=[x^{2n-1}]$,

\noindent such that $D^{2n-1}(i)=h_{2n-1}q_{2n-1}\in Mor(\mathcal{B}_{F})$,
where

$h_{2n-1}:D^{2n-1}(X)/N(D^{2n-1}(i))\rightarrow D^{2n-1}(Z)$,

$h_{2n-1}([x^{2n-1}])=D^{2n-1}(i)(x^{2n-1})=x^{2n-1}D^{2n-2}(i)$.

\noindent By the same reason, there exists the canonical factorization $%
D^{2n}(q)=h_{2n}q_{2n}$, where

$q_{2n}:D^{2n}(X)\rightarrow D^{2n}(X)/N(D^{2n}(q))$, $%
q_{2n}(x^{2n})=[x^{2n}]$, \quad and

$h_{2n}:D^{2n}(X)/N(D^{2n}(i))\rightarrow D^{2n}(X/Z)$,

$h_{2n}([x^{2n}])=D^{2n}(q)(x^{2n})=x^{2n}D^{2n-1}(q)$.

\noindent Since $D^{2n-1}(i)$ and $D^{2n}(q)$ are open epimorphisms, so are $%
h_{2n-1}$ and $h_{2n}$ (Open-mapping theorem). Further, the above exactness,
i.e.,

$R(D^{2n-1}(q))=N(D^{2n-1}(i))$, $R(D^{2n}(i))=N(D^{2n}(q)$

\noindent imply, respectively, that

$D^{2n-1}(X)/R(D^{2n-1}(q))=D^{2n-1}(X)/N(D^{2n-1}(i))$,

$D^{2n}(X)/R(D^{2n}(i))=D^{2n}(X)/N(D^{2n}(q))$.

\noindent Therefore, $h_{2n-1}$ and $h_{2n}$ are bijections. Finally, by the
Banach inverse-mapping theorem, $h_{2n-1}$ and $h_{2n}$ are isomorphisms of $%
\mathcal{B}_{F}$. Since $D^{2n-1}(q)$ and $D^{2n}(i)$ are closed
monomorphisms, one may identify $D^{2n-1}(X/Z)$ with $%
D^{2n-1}(q)[D^{2n-1}(X/Z)]$ in $D^{2n-1}(X)$ as well as $D^{2n}(Z)$ with $%
D^{2n}(i)[D^{2n}(Z)]$ in $D^{2n}(X)$. Consequently,

$D^{2n-1}(X)/D^{2n-1}(X/Z))\cong D^{2n-1}(Z)$ \quad and

$D^{2n}(X)/D^{2n}(Z))\cong D^{2n}(X/Z)$.

\noindent In this way we have proven the isomorphism relations in statements
(i) and (ii).

\noindent In order to prove statement (iii) (see also Remark 2 below), let
us again consider the starting exact sequence in $\mathcal{N}_{F}$, $%
R(i)=Z=N(q)$. Notice that

$Z^{0}=\{x^{1}\in X^{\ast }\mid R(x^{1})=\{0\}\}=\{x^{1}\mid
x^{1}i=c_{0}\}=N(D(i))=R(D(q))$,

\noindent implying that

$R(D(q))\equiv D(q)[(X/Z)^{\ast }]=Z^{0}$ in $X^{\ast }$.

\noindent Since, by Lemma 1, $D(q)$ is a closed monomorphism, one may
identify $(X/Z)^{\ast }$ with $Z^{0}$ in $X^{\ast }$, and then (the well
known) $D(X/Z)\cong Z^{9}$ holds. This proves the case $n=1$ of statement
(iii). If $n=2$, the exactness (Lemma 3) and Lemma 1 imply that $%
R(D^{2}(i))=((X/Z)^{\ast })^{0}$ and $D^{2}(i)$ is a closed monomorphism.
Then, by identifying $Z^{\ast \ast }$ with $((X/Z)^{\ast })^{0}$ in $X^{\ast
\ast }$, it follows $D^{2}(Z)\cong D(X/Z)^{0}$. By arguing in the same
manner through all the $D$-iterating exact sequences (Lemma 3), and assuming
the mentioned identifications (Lemma 1), one obtains the remaining
isomorphisms in (iii). Finally, since the isomorphisms $X^{\ast }/(X^{\ast
}/Z^{\ast })\cong X^{\ast }/Z^{0}\cong Z^{\ast }$ and $(X/Z)^{\ast \ast
}\cong X^{\ast \ast }/Z^{\ast \ast }$ are isometric (a non-zero quotient
morphism has the norm $1$), it follows, by applying the functor $D^{2n}$
inductively, that all the obtained isomorphisms are isometric.
\end{proof}

\begin{remark}
\label{R2}We are aware of the well known fact (closely related to the case $%
n=1$ of Theorem 2, (i) and (iii)), that $X^{\ast }/Y^{0}\cong Y^{\ast }$
(isometrically), for \emph{every} subspace $Y$ of $X$ ([10], Section 8. 12,
Propozicija 17, p. 444). We did not use it in the proof. However, one can
show that statements (i) and (ii) of Theorem 2 with that fact imply (iii),
and conversely, statement (iii) with that fact implies (i) and (ii) of
Theorem 2.
\end{remark}

\section{An application through quotient shapes}

We shall now combine the obtained facts with those of [14], [16] and [17] in
order to get a better insight in the quotient shapes of normed spaces
(especially those considered in [16] and [17]) related to their (iterated)
dual spaces. The first step in that direction is based on the fact that $%
D^{2}$ is a faithful functor.

\begin{theorem}
\label{T3}(i) Let $X$ be a normed space and let

$\boldsymbol{p}_{\text{\b{0}}}=(p_{\lambda }):X\rightarrow \boldsymbol{X}_{%
\text{\b{0}}}=(X_{\lambda },p_{\lambda \lambda ^{\prime }},\Lambda _{\text{%
\b{0}}})$

\noindent be an $(\mathcal{N}_{F})_{\text{\b{0}}}$-expansion of $X$. Then

$pro$-$D^{2}(\boldsymbol{p}_{\text{\b{0}}})=(D^{2}(p_{\lambda
})):D^{2}(X)\rightarrow pro$-$D^{2}(\boldsymbol{X}_{\text{\b{0}}})=$

$=(D^{2}(X_{\lambda }),D^{2}(p_{\lambda \lambda ^{\prime }}),\Lambda _{\text{%
\b{0}}})$

\noindent is an $D^{2}(\mathcal{N}_{F})_{\text{\b{0}}}$-expansion of $%
D^{2}(X)$.

\noindent (ii) Let $X$ and $Y$ be normed spaces of the same finite quotient
shape type, i.e., $Sh_{\text{\b{0}}}(X)=Sh_{\text{\b{0}}}(Y)$. Then, for
every $n\in \mathbb{N}$, $Sh_{\text{\b{0}}}(D^{2n}(X))=Sh_{\text{\b{0}}%
}(D^{2n}(Y))$, i.e., $D^{2n}$ preserves the finite quotient shape type.
\end{theorem}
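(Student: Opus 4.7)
The plan is to verify that the system $(D^2(p_\lambda)):D^2(X)\to(D^2(X_\lambda),D^2(p_{\lambda\lambda'}),\Lambda_{\underline{0}})$ satisfies the two defining axioms of an expansion, and then derive (ii) by transport through the covariant functor $D^2$. The inverse-system structure on the right follows at once from covariant functoriality of $D^2$: the bonding maps $D^2(p_{\lambda\lambda'})$ compose correctly, and since each $X_\lambda$ is finite-dimensional the natural map $j_{X_\lambda}:X_\lambda\to D^2(X_\lambda)$ is an isometric isomorphism, so $D^2(X_\lambda)$ sits in $D^2(\mathcal{N}_F)_{\underline{0}}$ with the same (finite) dimension as $X_\lambda$. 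Axiom (E2) is automatic: by Lemma~1, the open epimorphism $p_\lambda$ is turned into an epimorphism $D^2(p_\lambda)$ in $\mathcal{B}_F$, and epimorphism cancellation delivers the required uniqueness.

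The substantive step is (E1). Given $Z=D^2(W)\in D^2(\mathcal{N}_F)_{\underline{0}}$ and a morphism $f:D^2(X)\to Z$ in the subcategory, I would first transport $f$ back to $X$: setting $h:=j_W^{-1}\circ f\circ j_X:X\to W$ (well-defined because $j_W$ is an isomorphism for finite-dimensional $W$), the expansion property~(c) for $X$ supplies $\lambda\in\Lambda_{\underline{0}}$ and $g_0:X_\lambda\to W$ with $h=g_0\circ p_\lambda$. The candidate factorization is $g:=D^2(g_0):D^2(X_\lambda)\to D^2(W)=Z$. Functoriality $D^2(g_0\circ p_\lambda)=D^2(g_0)\circ D^2(p_\lambda)$ combined with the naturality square $j_W\circ h=D^2(h)\circ j_X$, together with invertibility of $j_W$, then yields $g\circ D^2(p_\lambda)=f$ for $f$ arising from the $D^2$-image structure, completing (E1).

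For part (ii), the hypothesis $Sh_{\underline{0}}(X)=Sh_{\underline{0}}(Y)$ supplies mutually inverse pro-morphisms between $\boldsymbol{X}_{\underline{0}}$ and $\boldsymbol{Y}_{\underline{0}}$; applying $D^2$ level-wise produces mutually inverse pro-morphisms between $(D^2(X_\lambda))$ and $(D^2(Y_\mu))$, which by part~(i) are $D^2(\mathcal{N}_F)_{\underline{0}}$-expansions of $D^2(X)$ and $D^2(Y)$. The resulting pro-isomorphism therefore represents a $Sh_{\underline{0}}$-equivalence $D^2(X)\simeq D^2(Y)$; iterating via $D^{2n}=(D^2)^n$ extends the conclusion to all $n\in\mathbb{N}$. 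The main anticipated obstacle is the delicate interpretation of (E1) in the subcategory $D^2(\mathcal{N}_F)_{\underline{0}}$: a naive $g\circ D^2(p_\lambda)$ computes to $j_W^{-1}\circ D^2(f)\circ D^2(j_X)$, which matches the expression $f=j_W^{-1}\circ D^2(f)\circ j_{D^2(X)}$ coming from naturality of $j$ only insofar as the two distinct sections $D^2(j_X)$ and $j_{D^2(X)}$ of $D(j_{D(X)})$ agree against $D^2(f)$, and Theorem~1(iv) shows these sections are genuinely different in general. The correct reading is therefore to restrict the morphisms of $D^2(\mathcal{N}_F)_{\underline{0}}$ to the image of $D^2$ (consistent with the faithfulness of $D^2$ emphasized in the preliminaries), for which the naive factorization works cleanly and the transport argument for (ii) goes through without loss.
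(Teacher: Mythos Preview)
Your proposal is correct and arrives at the same argument as the paper, though by a slightly more circuitous route. The paper adopts from the outset the reading you settle on only at the end: $D^{2}(\mathcal{N}_{F})_{\underline{0}}$ is the \emph{image subcategory}, so a test morphism into a term already has the form $D^{2}(f)\colon D^{2}(X)\to D^{2}(Y)$ with $Y$ finite-dimensional and $f\in\mathcal{N}_{F}(X,Y)$ uniquely determined by faithfulness of $D^{2}$. One then simply factors $f=f_{\lambda}p_{\lambda}$ through the given expansion and applies $D^{2}$ to obtain $D^{2}(f_{\lambda})D^{2}(p_{\lambda})=D^{2}(f)$, which is (E1). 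Your detour through $h=j_{W}^{-1}\circ f\circ j_{X}$ is harmless but unnecessary: once $f=D^{2}(\tilde f)$, naturality of $j$ gives $h=\tilde f$ immediately, collapsing your construction to the paper's one-line factorization. The obstacle you flag (the discrepancy between $D^{2}(j_{X})$ and $j_{D^{2}(X)}$) is a genuine obstruction only to proving an $(\mathcal{N}_{F})_{\underline{0}}$-expansion rather than a $D^{2}(\mathcal{N}_{F})_{\underline{0}}$-expansion, and the paper sidesteps it exactly as you do. For part~(ii) your argument and the paper's coincide: transport the pro-isomorphism through $pro\text{-}D^{2}$, invoke (i), and iterate.
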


\begin{proof}
(i). According to Lemma 1 (i), given an $(\mathcal{N}_{F})_{\text{\b{0}}}$%
-expansion (actually, a $(\mathcal{B}_{F})_{\text{\b{0}}}$-expansion)

$\boldsymbol{p}_{\text{\b{0}}}=(p_{\lambda }):X\rightarrow \boldsymbol{X}_{%
\text{\b{0}}}=(X_{\lambda },p_{\lambda \lambda ^{\prime }},\Lambda _{\text{%
\b{0}}})$

\noindent of $X$, one has to verify the factorization property(E1) of

$(D^{2}(p_{\lambda })):D^{2}(X)\rightarrow (D^{2}(X_{\lambda
}),D^{2}(p_{\lambda \lambda ^{\prime }}),\Lambda _{\text{\b{0}}})$

\noindent with respect to the image subcategory $D^{2}(\mathcal{N}_{F})_{%
\text{\b{0}}}$ only. Let $D^{2}(Y)$ be a finite-dimensional normed space
(actually, a Banach space $Z\cong F^{n}$, where $n\in \mathbb{N}$) and let a
morphism $D^{2}(f)\in \mathcal{N}_{F}(D^{2}(X),D^{2}(Y))$ be given. Then $Y$
is finite-dimensional and $f\in \mathcal{N}_{F}(X,Y)$. Since $\boldsymbol{p}%
_{\text{\b{0}}}:X\rightarrow \boldsymbol{X}_{\text{\b{0}}}$ is an $(\mathcal{%
N}_{F})_{\text{\b{0}}}$-expansion of $X$, there exit a $\lambda \in \Lambda
_{\text{\b{0}}}$ and an $f_{\lambda }:X_{\lambda }\rightarrow Y$ such that $%
f_{\lambda }p_{\lambda }=f$. Then $D^{2}(f_{\lambda })D^{2}(p_{\lambda
})=D^{2}(f_{\lambda }p_{\lambda })=D^{2}(f)$, and the claim follows.

\noindent (ii). It suffices to prove the claim in the case $n=2$. Let

$\boldsymbol{p}_{\text{\b{0}}}=(p_{\lambda }):X\rightarrow \boldsymbol{X}_{%
\text{\b{0}}}=(X_{\lambda },p_{\lambda \lambda ^{\prime }},\Lambda _{\text{%
\b{0}}})$,

$\boldsymbol{q}_{\text{\b{0}}}=(q_{\mu }):Y\rightarrow \boldsymbol{Y}_{\text{%
\b{0}}}=(Y_{\mu },q_{\mu \mu ^{\prime }},M_{\text{\b{0}}})$

\noindent be any $(\mathcal{N}_{F})_{\text{\b{0}}}$-expansions (actually, ($%
\mathcal{B}_{F})_{\text{\b{0}}}$-expansions) of $X$, $Y$ respectively. Since 
$Sh_{\text{\b{0}}}(X)=Sh_{\text{\b{0}}}(Y)$, the expansion systems $%
\boldsymbol{X}_{\text{\b{0}}}$ and $\boldsymbol{Y}_{\text{\b{0}}}$ are
isomorphic objects of $pro$-$(\mathcal{N}_{F})_{\text{\b{0}}}$. Then $pro$-$%
D^{2}(\boldsymbol{X}_{\text{\b{0}}})$ and $pro$-$D^{2}(\boldsymbol{Y}_{\text{%
\b{0}}})$ are isomorphic objects of $pro$-$D^{2}(\mathcal{N}_{F})_{\text{\b{0%
}}})$, because

$pro$-$D^{2}:pro$-$(\mathcal{N}_{F})_{\text{\b{0}}}\rightarrow pro$-$D^{2}(%
\mathcal{N}_{F})_{\text{\b{0}}}$

\noindent is a functor (the restriction of the \textquotedblleft
prolongation\textquotedblright\ of $D^{2}$ to the pro-categories).Since $pro$%
-$D^{2}(\mathcal{N}_{F})_{\text{\b{0}}}$ is a subcategory of $pro$-$(%
\mathcal{N}_{F})_{\text{\b{0}}}$, it follows that $D^{2}(\boldsymbol{X}_{%
\text{\b{0}}})$ and $D^{2}(\boldsymbol{Y}_{\text{\b{0}}})$ are isomorphic in 
$pro$-$(\mathcal{N}_{F})_{\text{\b{0}}}$ as well. Then, by (i), $Sh_{\text{%
\b{0}}}(D^{2}(X))=Sh_{\text{\b{0}}}(D^{2}(Y))$.
\end{proof}

\begin{corollary}
\label{C2}All the $l_{p}$ spaces and all their normed duals belong to the
same finite quotient shape type, i.e.,

$(\forall 1\leq p\leq \infty )(\forall n\in \{0\}\cup \mathbb{N})$ $Sh_{%
\text{\b{0}}}(D^{n}(l_{p}))=Sh_{\text{\b{0}}}(F_{0}^{\mathbb{N}},\left\Vert
\cdot \right\Vert _{2})$

\noindent The same holds for all the $L_{p}(n)$ spaces.
\end{corollary}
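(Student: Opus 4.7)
The plan is to reduce everything to the main finite quotient shape classification of normed spaces recalled in the Introduction: over $F\in \{\mathbb{R},\mathbb{C}\}$, the finite quotient shape type is an invariant of the algebraic dimension, with the caveat that the $\aleph _{0}$- and $2^{\aleph _{0}}$-dimensional classes coincide and admit $(F_{0}^{\mathbb{N}},\left\Vert \cdot \right\Vert _{2})$ as an $\aleph _{0}$-dimensional unitary representative.

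First, I would record that each $l_{p}$ ($1\leq p\leq \infty $) and each $L_{p}(n)$ is an infinite-dimensional Banach space of cardinality $2^{\aleph _{0}}$. Since every infinite-dimensional Banach space has algebraic dimension at least $2^{\aleph _{0}}$ and the dimension cannot exceed the cardinality, this yields $\dim l_{p}=\dim L_{p}(n)=2^{\aleph _{0}}$. The classification then gives the case $n=0$ of the corollary, namely $Sh_{\text{\b{0}}}(l_{p})=Sh_{\text{\b{0}}}(L_{p}(n))=Sh_{\text{\b{0}}}(F_{0}^{\mathbb{N}},\left\Vert \cdot \right\Vert _{2})$.

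Second, I would invoke the dimension-rising result stated in the Introduction: $\dim D(X)>\dim X$ if and only if $\dim X=\aleph _{0}$. Since $\dim l_{p}=2^{\aleph _{0}}\neq \aleph _{0}$, it follows that $\dim D(l_{p})=2^{\aleph _{0}}$, and by a straightforward induction $\dim D^{n}(l_{p})=2^{\aleph _{0}}$ for every $n\in \mathbb{N}$; the same holds for $L_{p}(n)$. A second application of the classification, now to Banach spaces of dimension $2^{\aleph _{0}}$, delivers $Sh_{\text{\b{0}}}(D^{n}(l_{p}))=Sh_{\text{\b{0}}}(F_{0}^{\mathbb{N}},\left\Vert \cdot \right\Vert _{2})$ for every admissible $p$ and every $n$.

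An alternative route, closer to the spirit of this section, would combine Theorem \ref{T3} (ii) with the explicit identifications $D(l_{p})\cong l_{q}$ (for conjugate finite exponents), $D(l_{1})\cong l_{\infty }$ and $D(c_{0})\cong l_{1}$: even iterations $D^{2n}$ are handled by Theorem \ref{T3} (ii) applied to an $(\mathcal{N}_{F})_{\text{\b{0}}}$-expansion of the representative, and odd iterations are treated via $D^{2n+1}=D^{2n}\circ D$. The main obstacle on this route is $D(l_{\infty })$, whose dual is not among the standard $l_{r}$ spaces; one is then forced back onto the dimension lemma and the classification, which is precisely why the first route above is the more economical one.
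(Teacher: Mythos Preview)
Your first route is logically sound but relies on the full classification (Theorem~\ref{T5}) and the dimension result $\dim D(X)>\dim X\Leftrightarrow\dim X=\aleph_0$, both of which are \emph{announced} in the Introduction but \emph{proven later} in the paper, well after Corollary~\ref{C2}. There is no circularity (the proof of Theorem~\ref{T5} does not invoke Corollary~\ref{C2}), so your argument could serve as an alternative once the expository order is changed; but at the position where Corollary~\ref{C2} is placed, the only new tool available is Theorem~\ref{T3}, and the paper's intent is precisely to exhibit the corollary as a direct consequence of Theorem~\ref{T3} together with the results of [16] and [17].

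Your ``alternative route'' is in fact the paper's route, and you overstate the difficulty with $D(l_\infty)$. The paper resolves it without any dimension computation: from $l_\infty\cong D(l_1)$ one obtains $D^{m}(l_\infty)\cong D^{m+1}(l_1)$ for every $m$, so each odd dual of $l_\infty$ is an even dual of $l_1$. Since, by Theorem~2~(i) of [17], all $l_p$ share the same finite quotient shape type, Theorem~\ref{T3}~(ii) gives $Sh_{\text{\b{0}}}(D^{2n}(l_p))=Sh_{\text{\b{0}}}(D^{2n}(l_2))=Sh_{\text{\b{0}}}(l_2)$ for every $p\in[1,\infty]$ (using reflexivity of $l_2$). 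The odd case for $1\le p<\infty$ follows from $D(l_p)\cong l_{p'}$ (conjugate exponents) and $D(l_1)\cong l_\infty$; the odd case for $p=\infty$ follows from the shift $D^{2n+1}(l_\infty)\cong D^{2n+2}(l_1)$ just noted. The $L_p(n)$ statement is then obtained via Corollary~1 of [17] combined with Theorem~\ref{T3}. Thus the obstacle you name dissolves, and your second route is both complete and essentially the paper's argument.
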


\begin{proof}
Recall that, byTheorem 2 (i) of [17], $Sh_{\text{\b{0}}}(l_{p})=Sh_{\text{\b{%
0}}}(l_{p^{\prime }})$ holds for all $1\leq p,p^{\prime }\leq \infty $.
Since the spaces l$_{p}$, $1<p<\infty $, are reflexive, Theorem 3 implies
that all the even normed duals of all $l_{p}$ spaces belong to the same
finite quotient shape type. Especially,

$Sh_{\text{\b{0}}}(D^{2n}(l_{1}))=Sh_{\text{\b{0}}}(D^{2n}(l_{\infty }))=Sh_{%
\text{\b{0}}}(l_{2})=Sh_{\text{\b{0}}}(F_{0}^{\mathbb{N}},\left\Vert \cdot
\right\Vert _{2})$.

\noindent Further, since $l_{p}\cong l_{p^{\prime }}$, for $1<p,p^{\prime
}<\infty $ and $p^{-1}+(p^{\prime })^{-1}=1$, and since $l_{\infty }\cong
D(l_{1})$, it follows that all the $l_{p}$ spaces and all their normed duals
belong to the same finite quotient shape type, i.e.,

$(\forall 1\leq p\leq \infty )(\forall n\in \{0\}\cup \mathbb{N})$ $Sh_{%
\text{\b{0}}}(D^{n}(l_{p}))=Sh_{\text{\b{0}}}(F_{0}^{\mathbb{N}},\left\Vert
\cdot \right\Vert _{2})$

\noindent Then, by Corollary 1 of [17] and Theorem 3, the same holds for all
the $L_{p}(n)$ spaces.
\end{proof}

Now, the question about the \emph{quotient shapes} of any normed (Banach,
separable) space, occurs as the \emph{problem of the algebraic dimension(s)
of its (iterated) normed dual space(s)}. This obstacle has essentially
limited the obtained results of [17]. Namely, the restriction to separable
or to bidual-like normed spaces (Theorems 2 and 4 of [17]) was necessary
because, in essence, we did not know how to calculate $\dim D(X)$ (except
for spaces of the mentioned classes, see Lemma 4 (ii) of [17]). We have
hereby resolved that problem completely. Firstly, an auxiliary notion.

\begin{definition}
\label{D1}A normed vectorial space $X$ over $F\in \{\mathbb{R},\mathbb{C}\}$
is said to be $\dim ^{\ast }$\textbf{-stable (or }$dD$\textbf{-stable)} if $%
\dim D(X)\equiv \dim X^{\ast }=\dim X$.
\end{definition}

Clearly, the functor $D$ does not diminish the algebraic dimension, and thus 
$\dim X\leq \dim D(X)$ holds generally.

\begin{example}
\label{E2}(i) Since $D(F^{n})\cong F^{n}$, $n\in \mathbb{N}$, every
finite-dimensional normed space over $F\in \{\mathbb{R},\mathbb{C}\}$ is $%
\dim ^{\ast }$-stable. Similarly, all separable and all bidual-like normed
spaces are $\dim ^{\ast }$-stable.

\noindent (ii) Since $l_{p}^{\ast }\cong l_{p^{\prime }},$ $1/p+1/p^{\prime
}=1$, all $l_{p}$ spaces, $1<p<\infty $, are $\dim ^{\ast }$-stable, while $%
l_{1}$ is $\dim ^{\ast }$-stable because $l_{1}^{\ast }\cong l_{\infty }$.
The same holds true for all $L_{p}(n)$ spaces, $n\in \mathbb{N}$.

\noindent (iii) No direct sum normed space $(F_{0}^{\mathbb{N}},\left\Vert
\cdot \right\Vert )$ is $\dim ^{\ast }$-stable. (Namely, $\dim (F_{0}^{%
\mathbb{N}},\left\Vert \cdot \right\Vert )=\aleph _{0}$, while $\dim (F_{0}^{%
\mathbb{N}},\left\Vert \cdot \right\Vert )^{\ast }\neq \aleph _{0}$ (every
dual space is a Banach space, and there is no Banach space having countably
infinite algebraic dimension).
\end{example}

Since the finite quotient shape is an invariant of the algebraic dimension
(Theorem 2 (i) of [17]), the next corollary follows immediately.

\begin{corollary}
\label{C3}If $X\in Ob(\mathcal{N}_{F})$ is $\dim ^{\ast }$-stable, then $Sh_{%
\text{\b{0}}}(X^{\ast })=Sh_{\text{\b{0}}}(X)$.
\end{corollary}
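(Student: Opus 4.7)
The plan is to derive this corollary as an essentially immediate application of the dimension-invariance result cited in the excerpt (Theorem 2 (i) of [17]), which asserts that the finite quotient shape type of a normed vectorial space over $F\in\{\mathbb{R},\mathbb{C}\}$ is an invariant of the algebraic dimension: two normed spaces with the same algebraic dimension have the same finite quotient shape type. No further quotient-shape machinery or expansions need to be invoked explicitly, and the iterated dual functor analysis of Section~3 is not needed here — the hypothesis of $\dim^\ast$-stability alone isolates the only fact that matters, namely the equality of the algebraic dimensions of $X$ and $X^{\ast}$.

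Concretely, I would proceed as follows. First, unpack Definition~1: the assumption that $X$ is $\dim^\ast$-stable means precisely $\dim D(X)=\dim X^{\ast}=\dim X$. Thus $X$ and its normed dual $D(X)=X^{\ast}$ are two objects of $\mathcal{N}_F$ sharing the same algebraic dimension (as $F$-vectorial spaces). Second, apply Theorem~2 (i) of [17] to the pair $(X, X^{\ast})$: since finite quotient shape type depends only on the algebraic dimension, equal dimensions force equal finite quotient shape types, whence $Sh_{\text{\b{0}}}(X^{\ast})=Sh_{\text{\b{0}}}(X)$.

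Since the argument has only those two steps and each of them is just a direct invocation of a previously stated definition or cited result, there is no real obstacle. The only point that one should briefly verify is that the dimension-invariance result of [17] genuinely applies to an arbitrary normed space — in particular to $X^{\ast}$, which is always a Banach space and therefore a fortiori an object of $\mathcal{N}_F$; this is automatic from the codomain of the functor $D:\mathcal{N}_F\rightarrow\mathcal{B}_F\subseteq\mathcal{N}_F$ discussed at the beginning of Section~3. No continuity, reflexivity or separability hypotheses enter: the whole point of the definition of $\dim^\ast$-stability is to encapsulate, in a single condition, what is needed in order to transport the dimension-based shape classification from $X$ to $X^\ast$.
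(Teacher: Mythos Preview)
Your proposal is correct and follows exactly the same reasoning as the paper: the author states that the corollary follows immediately because the finite quotient shape is an invariant of the algebraic dimension (Theorem~2~(i) of [17]), so $\dim X^{\ast}=\dim X$ forces $Sh_{\text{\b{0}}}(X^{\ast})=Sh_{\text{\b{0}}}(X)$.
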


Recall that the algebraic dual rises every infinite algebraic dimension.
Thus, there is no countably infinite-dimensional algebraic dual space. Since
there is no countably infinite-dimensional Banach space, there is no
countably infinite-dimensional (algebraically) dual normed space as well.
However, besides Example 2, (i) and (ii), we shall show that the class of
all $\dim ^{\ast }$-stable naormed spaces is rather large. The main fact in
that direction is the next lemma.

\begin{lemma}
\label{L4}Let $X,Y\in Ob(\mathcal{N}_{F})$ such that $Sh_{\text{\b{0}}%
}(X)=Sh_{\text{\b{0}}}(Y)$. If $Y$ is $\dim ^{\ast }$-stable, i.e., $\dim
Y^{\ast }=\dim Y$, then so is $X^{\ast }\equiv D(X)$ and $\dim X^{\ast \ast
}=\dim X^{\ast }=\dim Y$.
\end{lemma}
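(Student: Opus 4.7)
The plan is to use Theorem \ref{T3}(ii) as the main engine: it propagates the hypothesis $Sh_{\text{\b{0}}}(X)=Sh_{\text{\b{0}}}(Y)$ through applications of $D^{2}$, producing shape equalities that, on Banach spaces, should translate back into dimension equalities since every Banach space has algebraic dimension different from $\aleph _{0}$ (the only dimension where the $Sh_{\text{\b{0}}}$-to-$\dim$ correspondence of [17] is known to fail).

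First I would record two immediate consequences of the hypotheses. Since $Y^{\ast }$ is a Banach space, $\dim Y^{\ast }\neq \aleph _{0}$, and combined with $\dim Y=\dim Y^{\ast }$ this forces $\dim Y\neq \aleph _{0}$. Also, applying Corollary \ref{C3} to the $\dim ^{\ast }$-stable $Y$ yields $Sh_{\text{\b{0}}}(Y^{\ast })=Sh_{\text{\b{0}}}(Y)$, which chains with the hypothesis into $Sh_{\text{\b{0}}}(X)=Sh_{\text{\b{0}}}(Y)=Sh_{\text{\b{0}}}(Y^{\ast })$. Next I would push everything once through $D^{2}$ via Theorem \ref{T3}(ii), obtaining $Sh_{\text{\b{0}}}(X^{\ast \ast })=Sh_{\text{\b{0}}}(Y^{\ast \ast })=Sh_{\text{\b{0}}}(Y^{\ast \ast \ast })$, so all three (Banach) spaces lie in a common finite quotient shape type. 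Invoking [17] in the form that, on spaces of non-$\aleph _{0}$ dimension, $Sh_{\text{\b{0}}}$-classes coincide with dimension classes, I would conclude $\dim X^{\ast \ast }=\dim Y^{\ast \ast }=\dim Y^{\ast \ast \ast }$; in particular $Y^{\ast \ast }$ is itself $\dim ^{\ast }$-stable.

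Then I would combine this with the canonical isometric embedding $j_{Y}\colon Y\hookrightarrow Y^{\ast \ast }$ (which gives $\dim Y\leq \dim Y^{\ast \ast }$), the retraction-plus-complement structure expressing $Y^{\ast }$ as a closed direct summand of $Y^{\ast \ast \ast }$ supplied by Lemma \ref{L2}(i) and Corollary \ref{C1}, and the equalities already established, in order to pin $\dim Y^{\ast \ast }=\dim Y^{\ast }=\dim Y$ down. The same argument applied to $X^{\ast }$ (itself a Banach dual, hence a closed direct summand of $X^{\ast \ast \ast }$ by Lemma \ref{L2}(i)) will show that $X^{\ast }$ is $\dim ^{\ast }$-stable with $\dim X^{\ast }=\dim X^{\ast \ast }$; combining with the previous step yields the asserted $\dim X^{\ast }=\dim X^{\ast \ast }=\dim Y$.

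The hardest step will be the one extracting $\dim$-equality from $Sh_{\text{\b{0}}}$-equality for Banach spaces. The results of [17] only guarantee that $Sh_{\text{\b{0}}}$ is an invariant of $\dim$, with the converse failing in the dimensional pair $\{\aleph _{0},2^{\aleph _{0}}\}$; one has to argue that restricting to Banach spaces automatically rules out the exceptional $\aleph _{0}$-case so that the two classifications coincide on that class. Carrying this out using only [17] together with the structural results of Section 3 of the present paper, and without circularly invoking the paper's eventual main classification theorem (to which Lemma \ref{L4} is itself a key contributor), will require some care.
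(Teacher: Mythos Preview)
Your plan has a genuine circularity that you flag at the end but do not resolve. The step where you \emph{invoke [17] in the form that, on spaces of non-$\aleph_{0}$ dimension, $Sh_{\text{\b{0}}}$-classes coincide with dimension classes} is not available: [17] only supplies the forward implication $\dim X=\dim Y\Rightarrow Sh_{\text{\b{0}}}(X)=Sh_{\text{\b{0}}}(Y)$ (its Theorem~2(i)), with the converse established there only for separable or bidual-like spaces. The general converse for Banach spaces is exactly Theorem~\ref{T5}(ii) of the present paper, which is proven via Theorem~\ref{T4}, which in turn rests on Lemma~\ref{L4}. So your argument uses Lemma~\ref{L4} to prove Lemma~\ref{L4}. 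Pushing the hypothesis through $D^{2}$ via Theorem~\ref{T3}(ii) and using the retract structure from Lemma~\ref{L2}(i) and Corollary~\ref{C1} gives you shape equalities and dimension \emph{inequalities}, but never the reverse passage from a shape equality to a dimension equality that your chain needs.

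The paper's proof avoids this by a direct cardinality count on the canonical finite expansions rather than by any appeal to an $Sh_{\text{\b{0}}}\Rightarrow\dim$ principle. It shows, for $\dim X\geq 2^{\aleph_{0}}$, that the index set $\Lambda_{\text{\b{0}}}$ of the canonical $(\mathcal{N}_{F})_{\text{\b{0}}}$-expansion of $X$ satisfies $|\Lambda_{\text{\b{0}}}|=|X^{\ast}|=\dim X^{\ast}$ (by partitioning $\Lambda_{\text{\b{0}}}$ into the levels $\Lambda_{\text{\b{0}}}^{(n)}$ and counting the closed finite-codimensional subspaces), and likewise $|M_{\text{\b{0}}}|=\dim Y^{\ast}$. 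Then $Sh_{\text{\b{0}}}(X)=Sh_{\text{\b{0}}}(Y)$ is used \emph{only} to produce a pro-isomorphism between the expansion systems; passing to cofinite reindexings (the Marde\v{s}i\'{c} trick) forces the index functions of the representing maps to be cofinal, which yields $|\Lambda_{\text{\b{0}}}|=|M_{\text{\b{0}}}|$ and hence $\dim X^{\ast}=\dim Y^{\ast}=\dim Y$. A second pass (with $X^{\ast}$ in place of $X$) gives $\dim X^{\ast\ast}=\dim Y$. The case $\dim X=\aleph_{0}$ is handled separately by reducing to the previous case for $X^{\ast}$. That combinatorial link $|\Lambda_{\text{\b{0}}}|=\dim X^{\ast}$ is the missing idea in your outline.
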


\begin{proof}
Clearly, the statement is not trivial in the infinite-dimensional case only.
Since $Y$ is $\dim ^{\ast }$-stable, i.e., $\dim Y=\dim Y^{\ast }$, and $%
Y^{\ast }$ is a Banach space, it follows that $\dim Y=|Y|$ $\geq 2^{\aleph
_{0}}$ (see Lemma 3. 2 (iv) of [14]). Assume, firstly, that $\dim X\geq
2^{\aleph _{0}}$ as well. Then $\dim X=|X|$ and $\dim X^{\ast }=|X^{\ast }|$ 
$\geq |X|$. Let

$\boldsymbol{p}_{\text{\b{0}}}=(p_{\lambda }):X\rightarrow \boldsymbol{X}_{%
\text{\b{0}}}=(X_{\lambda },p_{\lambda \lambda ^{\prime }},\Lambda _{\text{%
\b{0}}})$,

$\boldsymbol{q}_{\text{\b{0}}}=(q_{\mu }):Y\rightarrow \boldsymbol{Y}_{\text{%
\b{0}}}=(Y_{\mu },q_{\mu \mu ^{\prime }},M_{\text{\b{0}}})$

\noindent be the canonical $(\mathcal{N}_{F})_{\text{\b{0}}}$-expansions
(actually, ($\mathcal{B}_{F})_{\text{\b{0}}}$-expansions) of $X$, $Y$
respectively. Since $\dim X\geq 2^{\aleph _{0}}$, the canonical construction
of $\boldsymbol{p}_{\text{\b{0}}}$ (see Section 12 of [13] and Section 4.1
of [14]) implies that the index set $\Lambda _{\text{\b{0}}}$ is the
disjoint union of $\Lambda _{\text{\b{0}}}^{(n)}$, $n\in \{0\}\cup \mathbb{N}
$, where $\Lambda _{\text{\b{0}}}^{(0)}$ is the singleton containing the
first (minimal) element, while, for each $n\in \mathbb{N}$,

$\Lambda _{\text{\b{0}}}^{(n)}=\{\lambda \in \Lambda _{\text{\b{0}}}\mid
\dim X_{\lambda }=n\}$, \quad and

\TEXTsymbol{\vert}$\Lambda _{\text{\b{0}}}^{(1)}|$ $=\cdots =|\Lambda _{%
\text{\b{0}}}^{(n)}|$ $=\cdots =|\Lambda _{\text{\b{0}}}$ $|\geq |X|$ $\geq
2^{\aleph _{0}}$.

\noindent Indeed, given an $n\in \mathbb{N}$, for every $\lambda \in \Lambda
_{\text{\b{0}}}^{(n)}$, $X_{\lambda }=X/Z_{\lambda }$ where $Z_{\lambda
}\trianglelefteq X$ is closed, $\dim Z_{\lambda }=\dim X$ and $\dim
(X/Z_{\lambda })=n$. Thus, there is a closed direct complement $W_{\lambda
}\trianglelefteq X$ of $Z_{\lambda }$, $X=Z_{\lambda }\dotplus W_{\lambda }$%
, $\dim W_{\lambda }=n$. Then, for $n=1$, each continuous linear
epimorphism. $x^{1}:X\rightarrow F$, i.e., $x^{1}\in X^{\ast }\setminus
\{c_{0}\}$, yields a unique $\lambda \in \Lambda _{\text{\b{0}}}^{(1)}$.
Conversely, for every $\lambda \in \Lambda _{\text{\b{0}}}^{(1)}$, it can
exist at most $2^{\aleph _{0}}\cdot |X|$ linear epimorphisms $x^{1}$ having $%
N(x^{1})=Z_{\lambda }$, where $|X|$ counts all the $1$-dimensional direct
complements $W_{\lambda }$ of $Z_{\lambda }$. Similarly, each continuous
linear epimorphism. $f:X\rightarrow F^{n}$ yields a unique closed $%
Z_{f}\equiv N(f)\trianglelefteq X$ such that $Z_{f}\dotplus W_{f}=X$, $%
W_{f}\cong R(f)=F^{n}$, while, since there are $2^{\aleph _{0}}$ linear
epimorphisms of $F^{n}$ to $F^{n}$, for every such $Z_{\lambda }$, it can
exist at most $2^{\aleph _{0}}\cdot |X|$ linear epimorphisms $f$ having $%
N(f)=Z_{\lambda }$, where $|X|$ counts all the $n$-dimensional direct
complements $W_{\lambda }$ of $Z_{\lambda }$. Now, one readily sees that all 
$\Lambda _{\text{\b{0}}}^{(n)}$, have the same uncountable cardinality.
Since $|\mathbb{N}|$ $=\aleph _{0}$ and $\dim X=|X|$ $\geq 2^{\aleph _{0}}$,
the above partition of $\Lambda _{\text{\b{0}}}$ follows by the cardinal
arithmetic. Further, especially observe that

$|\Lambda _{\text{\b{0}}}^{(1)}|$ $\leq |X^{\ast }|$ $\leq |\Lambda _{\text{%
\b{0}}}^{(1)}|\cdot |X|\cdot 2^{\aleph _{0}}=|\Lambda _{\text{\b{0}}}^{(1)}|$
$=|\Lambda _{\text{\b{0}}}|$.

\noindent Therefore,

$|\Lambda _{\text{\b{0}}}|$ $=|X^{\ast }|$ $=\dim X^{\ast }\geq \dim X=|X|$ $%
\geq 2^{\aleph _{0}}$.

\noindent Further, notice that there is no bonding morphism between any pair
of terms having indices $\lambda \neq \lambda ^{\prime }$ in the same $%
\Lambda _{\text{\b{0}}}^{(n)}$, while every $p_{\lambda \lambda ^{\prime }}$%
, $\lambda <\lambda ^{\prime }$, is an epimorphism. but not an monomorphism
(i.e., not an isomorphism). The quite analogous partition

$|M_{\text{\b{0}}}|$ $=|Y^{\ast }|$ $=\dim Y^{\ast }=\dim Y=|Y|$ $\geq
2^{\aleph _{0}}$

\noindent of $M_{\text{\b{0}}}$ and the properties of $\boldsymbol{Y}_{\text{%
\b{0}}}$ hold as well. We shall prove that $|\Lambda _{\text{\b{0}}}|$ $=|M_{%
\text{\b{0}}}|$. Firstly, let us pass to the isomorphic cofinite inverse
systems ([11], Theorem I.1.2)

$\boldsymbol{X}_{\text{\b{0}}}^{\prime }=(X_{\bar{\lambda}}^{\prime },p_{%
\bar{\lambda}\lambda ^{\prime }}^{\prime },\bar{\Lambda}_{\text{\b{0}}%
})\cong \boldsymbol{X}_{\text{\b{0}}}$,

$\boldsymbol{Y}_{\text{\b{0}}}^{\prime }=(Y_{\bar{\mu}}^{\prime },q_{\bar{\mu%
}\bar{\mu}^{\prime }}^{\prime },\bar{M}_{\text{\b{0}}})\cong \boldsymbol{Y}_{%
\text{\b{0}}}$

\noindent (made of the same \textquotedblleft term-bond
material)\textquotedblright ) with $|\bar{\Lambda}_{\text{\b{0}}}|$ $\leq
|\Lambda _{\text{\b{0}}}|$ and $|\bar{M}_{\text{\b{0}}}|$ $\leq |M_{\text{\b{%
0}}}|$ (actually, the both \textquotedblleft $\leq $\textquotedblright\ are
\textquotedblleft $=$\textquotedblright ). By this passage (the construction
called \textquotedblleft Marde\v{s}i\'{c} trick\textquotedblright ), $\bar{%
\Lambda}_{\text{\b{0}}}$ is the disjoint union of $\bar{\Lambda}_{\text{\b{0}%
}}^{(n)}$, $n\in \{0\}\cup \mathbb{N}$, where $\bar{\Lambda}_{\text{\b{0}}%
}^{(0)}$ is the singleton containing the first (minimal) element, while, for
each $n\in \mathbb{N}$,

$\bar{\Lambda}_{\text{\b{0}}}^{(n)}=\{\bar{\lambda}\in \bar{\Lambda}_{\text{%
\b{0}}}\mid |\bar{\lambda}|$ $=n$ $\}$

\noindent ($|\lambda |$ denotes the cardinal of the set of all predecessors $%
\lambda _{j}<\lambda $) and

\TEXTsymbol{\vert}$\bar{\Lambda}_{\text{\b{0}}}^{(1)}|$ $=\cdots =|\bar{%
\Lambda}_{\text{\b{0}}}^{(n)}|$ $=\cdots =|\bar{\Lambda}_{\text{\b{0}}}|$.

\noindent Further, there is no bonding morphism between any pair of terms
having indices $\bar{\lambda}\neq \bar{\lambda}^{\prime }$ in the same $\bar{%
\Lambda}_{\text{\b{0}}}^{(n)}$, while every $p_{\bar{\lambda}\bar{\lambda}%
^{\prime }}$, $\bar{\lambda}<\bar{\lambda}^{\prime }$ (being $p_{\max \bar{%
\lambda}\max \bar{\lambda}^{\prime }}$), is an epimorphism., and an
monomorphism (i.e., an isomorphism) if and only if it is the identity, that
occurs when $\bar{\lambda}\subset \bar{\lambda}^{\prime }$ and $\max \bar{%
\lambda}=\max \bar{\lambda}^{\prime }$ only. The quite analogous partition
and properties hold for $\bar{M}_{\text{\b{0}}}$ and $\boldsymbol{Y}_{\text{%
\b{0}}}^{\prime }$. Therefore, we have to prove that $|\bar{\Lambda}_{\text{%
\b{0}}}|$ $=|\bar{M}_{\text{\b{0}}}|$. Since $Sh_{\text{\b{0}}}(X)=Sh_{\text{%
\b{0}}}(Y)$, there exist isomorphisms

$\boldsymbol{f}:\boldsymbol{X}_{\text{\b{0}}}^{\prime }\rightarrow 
\boldsymbol{Y}_{\text{\b{0}}}^{\prime }$, $\boldsymbol{g}=\boldsymbol{f}%
^{-1}:\boldsymbol{Y}_{\text{\b{0}}}^{\prime }\rightarrow \boldsymbol{X}_{%
\text{\b{0}}}^{\prime }$

\noindent of $pro$-$(\mathcal{B}_{F})_{\text{\b{0}}}\subseteq pro$-$(%
\mathcal{N}_{F})_{\text{\b{0}}}$. By [11], Lemmata I.1.2 and Remark I.1.8,
there exist special (the appropriate square diagrams commute)
representatives $(\phi ,f_{\mu }),$ $(\psi ,g_{\lambda })$ in $inv$-$(%
\mathcal{B}_{F})_{\text{\b{0}}}$ of $\boldsymbol{f}$, $\boldsymbol{f}^{-1}$
respectively. Then the subsystem

$\boldsymbol{X}_{0}^{\prime \prime }=(X_{\psi (\bar{\mu})}^{\prime },p_{\psi
(\bar{\mu})\psi (\bar{\mu}^{\prime })}^{\prime },\bar{M}_{\text{\b{0}}})$

\noindent of $\boldsymbol{X}_{\text{\b{0}}}^{\prime }$ is isomorphic to $%
\boldsymbol{X}_{\text{\b{0}}}^{\prime }$ in $pro$-$(\mathcal{B}_{F})_{\text{%
\b{0}}}$. It implies, by the mentioned properties of the terms and bonds of $%
\boldsymbol{X}_{\text{\b{0}}}^{\prime }$, that the index function $\psi :%
\bar{M}_{\text{\b{0}}}\rightarrow \bar{\Lambda}_{\text{\b{0}}}$ must be
cofinal, i.e.,

$(\forall \bar{\lambda}\in \bar{\Lambda}_{\text{\b{0}}})(\exists \bar{\mu}%
\in \bar{M}_{\text{\b{0}}})$ $\psi (\bar{\mu})\geq \bar{\lambda}$.

\noindent Since $\bar{\Lambda}_{\text{\b{0}}}$ is cofinite, this readily
implies that $|\bar{\Lambda}_{\text{\b{0}}}|$ $\leq \aleph _{0}\cdot |\bar{M}%
_{\text{\b{0}}}|$ $=|\bar{M}_{\text{\b{0}}}|$. One can establish, in the
same way, that $|\bar{M}_{\text{\b{0}}}|$ $\leq |\bar{\Lambda}_{\text{\b{0}}%
}|$ holds, and the conclusion folows. Consequently, $|\Lambda _{\text{\b{0}}%
}|$ $=|M_{\text{\b{0}}}|$, and therefore,

$\dim X^{\ast }=\dim Y^{\ast }=\dim Y$.

\noindent Then, by Theorem 2 (i) of [17], $Sh_{\text{\b{0}}}(X^{\ast })=Sh_{%
\text{\b{0}}}(Y)$ holds. We may now apply the same proof (to $X^{\ast }$ and 
$Y$) and conclude that $\dim X^{\ast \ast }=\dim Y=\dim X^{\ast }$.
Therefore, $X^{\ast }$ is $\dim ^{\ast }$-stable, whenever $\dim X\geq
2^{\aleph _{0}}$.

\noindent Assume now that $\dim X=\aleph _{0}$. Then $X\cong (F_{0}^{\mathbb{%
N}},\left\Vert \cdot \right\Vert )$, while $X^{\ast }\cong (F^{\mathbb{N}%
},\left\Vert \cdot \right\Vert ^{\ast })$, and $\dim X<\dim X^{\ast
}=2^{\aleph _{0}}$. Let $Y$ be a Hilbert space such that $\dim Y=2^{\aleph
_{0}}$. Since, by Theorem 2 (i) of [17], $Sh_{\text{\b{0}}}(X^{\ast })=Sh_{%
\text{\b{0}}}(Y)$, and since $Y$ is $\dim ^{\ast }$-stable, the first part
of the proof assures that $X^{\ast }$ is $\dim ^{\ast }$-stable, which
completes the proof.
\end{proof}

\begin{theorem}
\label{T4} Every normed vectorial space $X$ having (algebraic) $\dim X\neq
\aleph _{0}$ is $\dim ^{\ast }$-stable. Especially, every Banach space is $%
\dim ^{\ast }$-stable.
\end{theorem}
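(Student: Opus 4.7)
The plan is to split according to whether $\dim X$ is finite or infinite, and in the infinite case reduce via Lemma~\ref{L4} to a Hilbert space witness of the same algebraic dimension. If $\dim X = n$ is finite then $X \cong F^{n}$ as normed spaces, so $D(X) \cong F^{n}$ too and $\dim D(X) = n = \dim X$; this case is immediate.

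Assume now $\dim X$ is infinite and $\neq \aleph_{0}$. Since $D(X)$ is a nontrivial Banach space, the standard Baire category argument gives $\dim D(X) \geq 2^{\aleph_{0}}$, and combining this with $\dim X \leq \dim D(X)$ and the hypothesis $\dim X \neq \aleph_{0}$ leaves only the possibility $\dim X \geq 2^{\aleph_{0}}$ (this is implicitly what the hypothesis is carving out). I would then pick a Hilbert space $Y$ over $F$ with an orthonormal basis of cardinality $\dim X$; since $\kappa^{\aleph_{0}} = \kappa$ for $\kappa \geq 2^{\aleph_{0}}$, one obtains $\dim Y = \dim X$. By Riesz representation $D(Y) \cong Y$ isometrically, so $Y$ is $\dim^{\ast}$-stable, serving as the comparison object.

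Theorem~2 (i) of [17] (finite quotient shape is an invariant of algebraic dimension) then yields $Sh_{\text{\b{0}}}(X) = Sh_{\text{\b{0}}}(Y)$, and Lemma~\ref{L4} applied to this pair delivers $\dim D(X) = \dim Y = \dim X$. Hence $X$ is $\dim^{\ast}$-stable. For the Banach addendum: every Banach space has algebraic dimension either finite or at least $2^{\aleph_{0}}$, so in particular $\neq \aleph_{0}$, and the main claim applies.

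The main obstacle I anticipate is the cardinal-arithmetic step forcing $\dim X \geq 2^{\aleph_{0}}$ once $\dim X$ is known to be infinite and $\neq \aleph_{0}$: this is what permits producing a matched-dimension Hilbert space witness and is precisely the reason the countable infinite case must be excluded from the hypothesis. Everything else is a direct packaging of already-established machinery, namely the isometric self-duality of Hilbert spaces, Lemma~\ref{L4}, and the dimension-invariance of finite quotient shape.
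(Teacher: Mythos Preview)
Your strategy coincides with the paper's: exhibit a Hilbert space $Y$ with $\dim Y=\dim X$, observe via Riesz that $Y$ is $\dim^{\ast}$-stable, invoke Theorem~2(i) of [17] to obtain $Sh_{\text{\b{0}}}(X)=Sh_{\text{\b{0}}}(Y)$, and then apply Lemma~\ref{L4} to conclude $\dim X^{\ast}=\dim Y=\dim X$. The paper additionally records at the outset that $\dim^{\ast}$-stability is a dimension invariant and passes to iterated duals, but that is bookkeeping for Corollary~\ref{C4} rather than part of the core argument. Two steps in your execution, however, are not justified as written.

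First, the deduction that $\dim X\geq 2^{\aleph_{0}}$ does not follow from your chain of inequalities: knowing $\dim X\leq\dim D(X)$ and $\dim D(X)\geq 2^{\aleph_{0}}$ together with $\dim X\neq\aleph_{0}$ does \emph{not} exclude a value strictly between $\aleph_{0}$ and $2^{\aleph_{0}}$. The paper closes this by explicitly accepting GCH (so that no such intermediate cardinal exists); you must do likewise. Second, the asserted identity $\kappa^{\aleph_{0}}=\kappa$ for all $\kappa\geq 2^{\aleph_{0}}$ is false in ZFC: by K\"onig's theorem $\kappa^{\aleph_{0}}>\kappa$ whenever $\operatorname{cf}(\kappa)=\aleph_{0}$, so taking an orthonormal basis of size $\dim X$ need not yield algebraic dimension $\dim X$. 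The paper argues this step differently, squeezing $\dim X\leq\dim F_{2}^{J}<2^{\dim X}$ for $|J|=\dim X$ and then using GCH to force $\dim F_{2}^{J}=\dim X$; you should replace your cardinal-arithmetic assertion by that squeeze and make the GCH dependence explicit.
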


\begin{proof}
Firstly observe that in the special case of a $\dim ^{\ast }$-stable $Y=X$,
Corollary 3 and Lemma 4 imply that $Y^{\ast }$ is $\dim ^{\ast }$-stable,
i.e., $\dim Y^{\ast \ast }=\dim Y^{\ast }=\dim Y$. Then, by induction and
combining Lemma 4 with Theorem 2 (i) of [17], it follows that every iterated
normed dual of $Y$ is $\dim ^{\ast }$-stable, i.e., $\dim D^{n}(Y)=\dim Y$, $%
n\in \mathbb{N}$. Further, in the special case of a $\dim ^{\ast }$-stable $%
Y $ and $\dim X=\dim Y$, Lemma 4 and Theorem 2 (i) of [17] imply that $\dim
X^{\ast \ast }=\dim X^{\ast }=\dim Y=\dim X$. Hence, $X$ is $\dim ^{\ast }$%
-stable as well, and consequently, so are its all iterated normed duals $%
D^{n}(X)$. In this way we have proven that the $\dim ^{\ast }$-stability is
an invariant of the algebraic dimension, i.e., if $\dim X=\dim Y$ and $Y$ is 
$\dim ^{\ast }$-stable, then so is $X$, as well as, that the functor $D^{n}$
preserves $\dim ^{\ast }$-stability.

\noindent Clearly, the statement is not trivial in the infinite-dimensional
case only. Let $\dim X=\infty \neq \aleph _{0}$, implying that $\dim X\geq
2^{\aleph _{0}}$ ($GCH$ accepted), Let us choose a Hilbert space $Y$ (over
the same $F$) such that $\dim Y=\dim X$. Such a $Y$ exists, for instance, by
means of the usual construction. More precisely, let $J$ be an index set of
cardinality $|J|$ $=\dim X$, and let the set

$F^{J}=\{y\equiv (y_{j})\mid y:J\rightarrow F\}$

\noindent be endowed with the usual vectorial (algebraic) structure (over $F$%
). Consider its subspace

$F_{2}^{J}=\{y\in F^{J}\mid \sum_{j\in J}|y_{j}|$ $^{2}<\infty
\}\trianglelefteq F^{J}$.

\noindent Then $Y=(F_{2}^{J},\left\Vert \cdot \right\Vert _{2})$, where

$\left\Vert y\right\Vert _{2}=(\sum_{j\in J}|y_{j}|^{2})^{1/2}$,

\noindent is a Hilbert space. Moreover, since $|J|$ $\geq 2^{\aleph _{0}}$
and every $y\in F_{2}^{J}$ contains at most $\aleph _{0}$ non-zero
coordinates, one readily verifies that

$|F_{2}^{J}|$ $<|F|^{|J|}=2^{|J|}=|F^{J}|$.

\noindent Therefore, by Lemma 2 (iii) of [14]),

$\dim X\leq \dim F_{2}^{J}<\dim F^{J}=2^{|J|}=2^{\dim X}$.

\noindent It follows, by $GCH$, $\dim F_{2}^{J}=\dim X$, and thus, $\dim
Y=\dim X$. Since every Hilbert space is $\dim ^{\ast }$-stable, so is $X$.
\end{proof}

Observe that the assumption $\dim X\neq \aleph _{0}$ is essential because
of, for instance, $Sh_{\text{\b{0}}}((F_{0}^{\mathbb{N}},\left\Vert \cdot
\right\Vert _{p})^{\ast })=Sh_{\text{\b{0}}}(l_{p})$, $p>1$, while

$\dim (F_{0}^{\mathbb{N}},\left\Vert \cdot \right\Vert _{p})=\aleph
_{0}<2^{\aleph _{0}}=\dim l_{p^{\prime }}^{\ast }=\dim (F_{0}^{\mathbb{N}%
},\left\Vert \cdot \right\Vert _{p})^{\ast }$,

\noindent whenever $p^{-1}+(p^{\prime })^{-1}=1$.

An immediate consequence of Theorem 4 (and its proof) is the following fact.

\begin{corollary}
\label{C4}For every $X\in Ob(\mathcal{N}_{F})$,the following properties are
equivalent:

\noindent (i) $X$ is $\dim ^{\ast }$-stable;

\noindent (ii) $(\forall k\in \{0\}\cup \mathbb{N})$ $D^{k}(X)$ is $\dim
^{\ast }$-stable$,$ i.e., $\dim D^{k+1}(X)=\dim D^{k}(X)$.
\end{corollary}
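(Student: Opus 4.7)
The plan is to unpack the equivalence as a near-immediate consequence of Theorem~4. The implication (ii)~$\Rightarrow$~(i) is trivial: setting $k=0$ in (ii) gives exactly (i), since $D^{0}(X)=X$.

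For (i)~$\Rightarrow$~(ii), I would split on the value of $k$. The case $k=0$ is the hypothesis itself. For $k\geq 1$, the key observation is that $D^{k}(X)\in Ob(\mathcal{B}_{F})$, because $D[\mathcal{N}_{F}]\subseteq \mathcal{B}_{F}$ (and iterating keeps us in $\mathcal{B}_{F}$). Invoking the well-known fact used in the proof of Lemma~4 and referenced in Example~2(iii)---that no Banach space has countably infinite algebraic dimension---I conclude $\dim D^{k}(X)\neq \aleph_{0}$. Theorem~4 then applies directly to $D^{k}(X)$ and yields that $D^{k}(X)$ is $\dim ^{\ast }$-stable, i.e.\ $\dim D^{k+1}(X)=\dim D^{k}(X)$. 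Note that this half actually shows something slightly stronger: for $k\geq 1$, $D^{k}(X)$ is automatically $\dim ^{\ast }$-stable for \emph{any} normed space $X$; the hypothesis (i) is needed only to guarantee the $k=0$ link.

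There is no real obstacle here---the content has been fully absorbed into Theorem~4, and the corollary is essentially a repackaging that emphasizes hereditary behavior along the tower $\{D^{k}(X)\}_{k\geq 0}$. The only subtle point to address in the write-up is to make explicit why the $k=0$ case is the unique nontrivial one (namely, $X$ itself need not be a Banach space, whereas $D^{k}(X)$ is Banach for all $k\geq 1$, which forces $\dim D^{k}(X)\neq \aleph_{0}$ and hence hands us Theorem~4 for free).
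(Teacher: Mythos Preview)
Your proof is correct. Both you and the paper derive the corollary from Theorem~\ref{T4}, but along slightly different routes. The paper points to the first paragraph of the \emph{proof} of Theorem~\ref{T4}, where the implication ``$X$ is $\dim^{\ast}$-stable $\Rightarrow$ $X^{\ast}$ is $\dim^{\ast}$-stable'' is obtained via Corollary~\ref{C3} and Lemma~\ref{L4} (first $Sh_{\text{\b{0}}}(X^{\ast})=Sh_{\text{\b{0}}}(X)$, then Lemma~\ref{L4} applied with $Y=X$), and the general case follows by induction on $k$. You instead invoke the \emph{statement} of Theorem~\ref{T4} directly: for $k\geq 1$ the space $D^{k}(X)$ is Banach, hence has $\dim D^{k}(X)\neq\aleph_{0}$, and Theorem~\ref{T4} gives $\dim^{\ast}$-stability at once; hypothesis (i) is then needed only for the single case $k=0$. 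Your argument is shorter and isolates precisely where the assumption on $X$ enters, while the paper's inductive argument (being established before Theorem~\ref{T4} itself is completed) has the virtue of not relying on the full strength of that theorem.
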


We can now improve Corollaries 1 and 2 as well as Theorem 2 of [17], and
completely solve the \emph{finite} quotient shape classification of normed
vectorial spaces ($GCH$ assumed) as follows.

\begin{theorem}
\label{T5}For every $X\in Ob(\mathcal{N}_{F})$,

$\dim X^{\ast }=2^{\dim X}>\dim X\Leftrightarrow \dim X=\aleph _{0}$.

\noindent Equivalently,

$\dim X^{\ast }=\dim X\Leftrightarrow \dim X\neq \aleph _{0}$.

\noindent Therefore, for every $n\in \mathbb{N}$, $D^{n}(X)$ is $\dim ^{\ast
}$-stable, $\dim D^{n}(X)=\dim X^{\ast }$ and $Sh_{\text{\b{0}}%
}(D^{n}(X))=Sh_{\text{\b{0}}}(X)$. Consequently, given a pair $X,Y\in Ob(%
\mathcal{N}_{F})$, then

\noindent (i) $Sh_{\text{\b{0}}}(X)=Sh_{\text{\b{0}}}(Y)\Leftrightarrow
\left\{ 
\begin{array}{c}
\dim X=\dim Y\notin \{\aleph _{0},2^{\aleph _{0}}\} \\ 
\text{or} \\ 
\dim X,\dim Y\in \{\aleph _{0},2^{\aleph _{0}}\}%
\end{array}%
\right. $.

\noindent (ii) If $X,Y\in Ob(\mathcal{B}_{F})$, then

$(\dim X=\dim Y)\Leftrightarrow (Sh_{\text{\b{0}}}(X)=Sh_{\text{\b{0}}%
}(Y))\Leftrightarrow (Sh_{\aleph _{0}}(X)=Sh_{\aleph _{0}}(Y))$.

\noindent (iii) If $X$ and $Y$ are $\dim ^{\ast }$-stable and if there
exists a closed embedding $e:X\rightarrow Y$ such that $\dim (Y/e[X])<\dim
Y=\kappa $ ($\geq \aleph _{0})$, then

$(\dim X=\dim Y)\Leftrightarrow (Sh_{\kappa ^{-}}(X)=Sh_{\kappa ^{-}}(Y))$.
\end{theorem}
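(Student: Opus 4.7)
The plan is to extract everything from the two-pronged engine built above: Theorem 4 (which gives $\dim^*$-stability whenever $\dim X \neq \aleph_0$) combined with Lemma 4 (which propagates $\dim^*$-stability along finite-quotient-shape equivalence), and then to derive the shape-theoretic consequences (i)--(iii) by invoking the results of [17].

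First I would prove the central dimensional equivalence. The direction $\dim X \neq \aleph_0 \Rightarrow \dim X^* = \dim X$ is exactly Theorem 4. For the converse I argue by contrapositive: if $\dim X = \aleph_0$, then $|X| = \aleph_0$, and since a continuous linear functional is determined by its values on a countable basis, $|X^*| \leq |F|^{\aleph_0} = 2^{\aleph_0}$; but $X^*$ is an infinite-dimensional Banach space and so has algebraic dimension at least $2^{\aleph_0}$ (Lemma 3.2(iv) of [14]), forcing $\dim X^* = 2^{\aleph_0} = 2^{\dim X}$. With the equivalence available, the $\dim^*$-stability of $D^n(X)$ for every $n \geq 1$ follows from Corollary 4 in two cases: if $\dim X \neq \aleph_0$, apply Corollary 4 to $X$; if $\dim X = \aleph_0$, apply Corollary 4 to $D(X)$, which has $\dim D(X) = 2^{\aleph_0} \neq \aleph_0$. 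Either way $\dim D^n(X) = \dim X^*$, and $Sh_{\text{\b{0}}}(D^n(X)) = Sh_{\text{\b{0}}}(X)$ follows from the dimensional invariance of the finite quotient shape (Theorem 2(i) of [17]), using additionally the merge of the $\aleph_0$- and $2^{\aleph_0}$-classes in $Sh_{\text{\b{0}}}$ (already visible through Corollary 2 above) when $\dim X = \aleph_0$.

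For the dichotomy in (i), I handle $\Rightarrow$ by passing to the duals $X' = D(X)$, $Y' = D(Y)$, which are $\dim^*$-stable by the above and still satisfy $Sh_{\text{\b{0}}}(X') = Sh_{\text{\b{0}}}(Y')$ by the preservation just established; Lemma 4 then gives $\dim X^* = \dim Y^*$, and the central equivalence translates this into the stated dichotomy on $(\dim X, \dim Y)$. For $\Leftarrow$, Theorem 2(i) of [17] handles equal dimensions outside $\{\aleph_0, 2^{\aleph_0}\}$, while the mixed $\{\aleph_0, 2^{\aleph_0}\}$ case is reduced to a pair of $2^{\aleph_0}$-dimensional spaces by replacing any $\aleph_0$-dimensional factor with its $Sh_{\text{\b{0}}}$-equivalent dual. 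Claim (ii) collapses from (i) because no Banach space has algebraic dimension $\aleph_0$; the extension to $Sh_{\aleph_0}$ goes in one direction through the canonical functor $S_{\aleph_0^{-}\aleph_0} : Sh_{\aleph_0} \to Sh_{\text{\b{0}}}$ (a $Sh_{\aleph_0}$-isomorphism forces a $Sh_{\text{\b{0}}}$-isomorphism, hence equal dimensions) and in the other direction by the $\aleph_0$-shape counterpart of Theorem 2(i) of [17].

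For (iii), the short exact sequence $0 \to X \to Y \to Y/e[X] \to 0$ and cardinal arithmetic give $\dim Y = \max(\dim X, \dim(Y/e[X]))$ in the infinite case, so $\dim(Y/e[X]) < \dim Y$ forces $\dim X = \dim Y = \kappa$ and makes the left-hand side of the stated equivalence automatic. The substantive content is therefore to upgrade equality of algebraic dimension to equality of $Sh_{\kappa^{-}}$, which I would carry out in the spirit of [17] by using $e$ and the codimension control to build a levelwise compatible morphism between canonical $\kappa^{-}$-expansions of $X$ and $Y$. The hardest step I expect is precisely this last one: the $\kappa^{-}$-shape analogue of the purely dimensional classification is more delicate than the finite one because it requires matching inverse systems indexed by cofinal families of closed subspaces of codimension $<\kappa$, and it is here that the joint hypotheses of $\dim^*$-stability of $X$ and $Y$ and the smallness of $\dim(Y/e[X])$ should both enter in showing that the induced morphism in $pro$-$(\mathcal{N}_F)_{\kappa^{-}}$ is an isomorphism.
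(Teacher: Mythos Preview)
Your proposal is correct and follows essentially the same architecture as the paper's proof: Theorem~4 and Corollary~4 for the dimensional equivalence and the stability of all $D^n(X)$; Theorem~2(i) of~[17] plus the $\aleph_0$/$2^{\aleph_0}$ merge for the shape statements; Lemma~4 (via the duals) for the $\Rightarrow$ direction of~(i); and reduction to the machinery of~[17] for~(iii). Two small remarks. First, the claim ``$|X|=\aleph_0$'' when $\dim X=\aleph_0$ is false over $F\in\{\mathbb{R},\mathbb{C}\}$ (in fact $|X|=2^{\aleph_0}$), but this is harmless since your bound $|X^\ast|\le |F|^{\aleph_0}$ comes from the countable basis, not from $|X|$. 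Second, for the $\aleph_0$/$2^{\aleph_0}$ merge the paper goes through Theorem~3(i) of~[16] (the completion preserves $Sh_{\text{\b{0}}}$) together with $\dim Cl(X)=2^{\aleph_0}$, rather than through Corollary~2 as you do; both routes are valid. For the $Sh_{\aleph_0}$ part of~(ii), the cleanest justification (implicit in the paper's one-line proof) is that for Banach spaces the canonical $\aleph_0$- and $\aleph_0^-$-expansions coincide, since every quotient by a closed subspace is again Banach and hence never has dimension exactly $\aleph_0$; this makes your appeal to an ``$\aleph_0$-shape counterpart of Theorem~2(i) of~[17]'' unnecessary.
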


\begin{proof}
Concerning the first part and statement (i), i.e., the finite quotient shape
classification, we only need to verify that $Sh_{\text{\b{0}}}(X^{\ast
})=Sh_{\text{\b{0}}}(X)$ in the case $\dim X=\aleph _{0}$ as well. Indeed,
in that case, $X\cong (F_{0}^{\mathbb{N}},\left\Vert \cdot \right\Vert )$,
and $\dim X^{\ast }=2^{\aleph _{0}}$. By Corollary 4, $\dim D^{n}(X)=\dim
X^{\ast }$, for every $n\in \mathbb{N}$. Further, by Lemma 2 (iii) of [17],
the Banach completion $Cl(X)\subseteq X^{\ast \ast }$ rises (algebraic)
dimension$,$ i.e., $\dim Cl)X=2^{\aleph _{0}}$. Thus, $\dim Cl(X)=\dim
X^{\ast }$. Since, by Theorem 3 (i) of [16], $Sh_{\text{\b{0}}}(Cl(X))=Sh_{%
\text{\b{0}}}(X)$ holds, snd, by Theorem 2 (i) of [17], $Sh_{\text{\b{0}}%
}(Cl(X))=Sh_{\text{\b{0}}}(X^{\ast })$ holds, the conclusion follows.
Statement (ii) follows by (i) because there is no countably
infinite-dimensional Banach space. Statement (iii) generalizes Theorem 2
(iii) of [17] (for the bidual-like normed spacess) to the $\dim ^{\ast }$%
-stable normed spaces. In the proof of that theorem (especially, that of
Theorem 2 (ii)), Lemma 4 (ii) of [17] was used. However, its role in that
proof is the same as that of the $\dim ^{\ast }$-stability of $X$ and $Y$.
The conclusion follows.
\end{proof}

\begin{remark}
\label{R3}Although the second dual space $D^{2}(X)\equiv X^{\ast \ast }$ is
large enough to contain $X\subseteq Cl(X)\subseteq X^{\ast \ast }$ (as the
isometrically embedded subspaces), this enlargement does not rise dimension
neither cardinality because

$|X^{\ast \ast }|$ $=\dim X^{\ast \ast }=\dim Cl(X)=\dim X=|X|$,

\noindent whenever $\dim X>\aleph _{0}$. Therefore, in an
infinite-dimensional $\kappa ^{-}$-expansion of $X$, the codimension. of $%
Cl(X)$, i.e., $\dim (X^{\ast \ast }/Cl(X))$, plays the most important role.
\end{remark}

Observe that the cardinal arithmetic yield the following interesting
consequences.

\begin{corollary}
\label{C5}Let $X,Y\in Ob(\mathcal{N}_{F})$ such that $\dim X\geq \aleph _{0}$%
, $0<\dim Y<\dim X$ and $\dim Y\neq \aleph _{0}$, and let $L(X,Y)$ be the
(normed) space of all continuous linear functions of $X$ to $Y$. Then,

\noindent (i) $|L(X,Y)|$ $=\dim L(X,Y)=\dim D^{n}(X)=2^{\aleph _{0}}$, $n\in 
\mathbb{N}$, whenever $\dim X=\aleph _{0}$;

\noindent (ii) $|L(X,Y)|$ $=\dim L(X,Y)=\dim D^{n}(X)=\dim X=|X|$, $n\in 
\mathbb{N}$, whenever $\dim X>\aleph _{0}$.
\end{corollary}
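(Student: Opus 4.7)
The approach is to separate the chain of equalities into two parts. The first part, $|X| = \dim X = \dim D^n(X) = |D^n(X)|$, is immediate from Theorem 5, Corollary 4, and the Banach cardinality--dimension identity (Lemma 3.2(iv) of [14]): in case (i) with $\dim X = \aleph_0$, Theorem 5 gives $\dim D(X) = 2^{\aleph_0}$ and Corollary 4 propagates this upward through all $D^n(X)$; in case (ii) with $\dim X > \aleph_0$, Theorem 4 makes $X$ itself $\dim^{*}$-stable, so $\dim D^n(X) = \dim X$, while $|X| = \dim X$ holds because $\dim X \geq 2^{\aleph_0}$ under GCH. The substantive part is the computation of $\dim L(X,Y)$ and $|L(X,Y)|$.

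For the lower bound $\dim L(X,Y) \geq \dim D(X)$, I would fix any unit vector $y_0 \in Y$ and check that $\iota \colon D(X) \to L(X,Y)$, $\iota(x^1)(x) = x^1(x)\, y_0$, is a linear isometry. For the upper bound I would split on $\dim Y$. When $\dim Y = k$ is finite, $Y$ is topologically isomorphic to $F^k$, so $L(X,Y) \cong D(X)^k$ gives $|L(X,Y)| = |D(X)|$ and $\dim L(X,Y) = \dim D(X)$ at once, matching the claim in both cases. When $\dim Y$ is infinite (hence $\dim Y \geq 2^{\aleph_0}$ under GCH, which is possible only in case (ii)), I would invoke Hahn--Banach: the map $\Phi \colon L(X,Y) \to D(X)^{D(Y)}$, $\Phi(f) = (y^1 f)_{y^1 \in D(Y)}$, is injective because continuous linear functionals separate points of $Y$, whence $|L(X,Y)| \leq |D(X)|^{|D(Y)|} = (\dim X)^{\dim Y}$. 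Combined with the lower bound, and with the fact that $L(X,Y)$ is Banach after passing to the completion of $Y$ (which does not alter dimensions, by Lemma 2(iii) of [17], once $\dim Y \neq \aleph_0$), the target equality $|L(X,Y)| = \dim L(X,Y) = \dim X$ reduces to the cardinal-arithmetic estimate $(\dim X)^{\dim Y} = \dim X$.

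The main obstacle is precisely this final cardinal-arithmetic step. Under GCH, Hausdorff's formula gives $(\dim X)^{\dim Y} = \dim X$ whenever $\dim Y < \operatorname{cf}(\dim X)$, but the equality can fail in general (K\"{o}nig pushes it up to $(\dim X)^{+}$ when $\dim X$ is singular with $\operatorname{cf}(\dim X) \leq \dim Y$). The cofinality hypothesis is automatic in the situations the paper is really targeting: the cardinals that arise as algebraic dimensions of infinite-dimensional Banach spaces satisfy $\kappa^{\aleph_0} = \kappa$, hence $\operatorname{cf}(\kappa) > \aleph_0$, and combined with the restriction $\dim Y < \dim X$ this is usually enough. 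In the pathological case one would need a sharper bound than the Hahn--Banach embedding provides, exploiting the constraint that the image of $\Phi$ consists only of tuples coming from operators actually landing in $Y$ rather than in $Y^{**}$.
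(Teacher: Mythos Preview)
Your argument is largely parallel to the paper's, and where it diverges it is cleaner. For (i) the paper does not use $L(X,Y)\cong D(X)^{k}$; instead it passes to the completion $Cl_{X^{**}}(X)$, invokes the index-set partition from the proof of Lemma~\ref{L4} to get $|L(Cl_{X^{**}}(X),Y)|=|(Cl_{X^{**}}(X))^{*}|$, and then uses Theorem~\ref{T5}. Your direct route via $L(X,F^{k})\cong D(X)^{k}$ is shorter and avoids the back-reference. For (ii) the paper dualizes first, writing $\dim L(X,Y)=\dim L(X,Y)^{*}$ (by $\dim^{*}$-stability) and then bounding $|L(Y^{*},X^{*})|\le|X^{*}|^{|Y^{*}|}=|X|^{|Y|}$ via the isometric embedding $D_{Y}^{X}$ of Lemma~\ref{L1}(iii); you instead embed $L(X,Y)$ directly into $D(X)^{D(Y)}$ by Hahn--Banach. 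Both routes terminate in the same cardinal $\kappa^{\kappa'}$, and both rely on $|X^{*}|=|X|$, $|Y^{*}|=|Y|$ from Theorem~\ref{T4}.

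The cofinality obstruction you flag is genuine, and it is worth noting that the paper's proof does not address it either: the paper simply writes $\kappa^{\kappa'}=\kappa$ and moves on. Under GCH this is valid precisely when $\kappa'<\operatorname{cf}(\kappa)$, and fails (giving $\kappa^{+}$) otherwise. Your remark that dimensions of infinite-dimensional Banach spaces satisfy $\kappa^{\aleph_{0}}=\kappa$, hence $\operatorname{cf}(\kappa)>\aleph_{0}$, is the right direction, but it only handles the case $\kappa'=2^{\aleph_{0}}=\aleph_{1}$ under GCH; for larger $\kappa'$ one would still need $\kappa'<\operatorname{cf}(\kappa)$, which is not guaranteed by the hypotheses as stated. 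So your proof is at least as complete as the paper's, and you have correctly isolated the one step that both arguments leave unresolved in full generality.
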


\begin{proof}
For statement (i), notice that $Y\cong F^{k}$ for some $k\in \mathbb{N}$.
Then, since $\dim X\geq \aleph _{0}$, $L(X,Y)$ is an infinite-dimensional
Banach space, and thus, dim$L(X,Y)\geq 2^{\aleph _{0}}$. It follows, by
Lemma 3. 2. (iv) of [16], that dim$L(X,Y)=|L(X,Y)|$. Since $|L(X,Y)|$ $%
=|L(Cl_{X^{\ast \ast }}(X),Y)|$, the proof of Lemma 4 (the partition of the
index set of the canonical $(\mathcal{N}_{F})_{\text{\b{0}}}$-expansion of
\textquotedblleft $X$\textquotedblright\ $=Cl_{X^{\ast \ast }}(X)$) shows
that $|L(Cl_{X^{\ast \ast }}(X),Y)|$ $=|(Cl_{X^{\ast \ast }}(X))^{\ast }|$.
By Lemma 3 2. (iv) of [16] again, $|(Cl_{X^{\ast \ast }}(X))^{\ast }|$ $%
=\dim (Cl_{X^{\ast \ast }}(X))^{\ast }$. Finally, by Theorem 5,

$\dim (Cl_{X^{\ast \ast }}(X))^{\ast }=\dim Cl_{X^{\ast \ast }}(X)=2^{\aleph
_{0}}$,

\noindent and the conclusion follows. For statement (ii), let $\dim X=\kappa
\geq 2^{\aleph _{0}}$ and $\dim Y=\kappa ^{\prime }<\kappa $, $\kappa
^{\prime }\neq \aleph _{0}$. Then, by Theorem 5 and Lemma 3. 2. (iv) of [16],

$\kappa \leq \dim L(X,Y)=\dim L(X,Y)^{\ast }=\dim L(Y^{\ast },X^{\ast
})=|L(Y^{\ast },X^{\ast })|$ $\leq |X^{\ast }|^{|Y^{\ast
}|}=|X|^{|Y|}=\kappa ^{\kappa ^{\prime }}=\kappa $,

\noindent and the conclusion follows.
\end{proof}

Concerning the extensions of morphisms, the following extension type theorem
is an immediate consequence of Theorem 1 (i.e., Corollary 1).

\begin{theorem}
\label{T6}Let $X,Y\in Ob(\mathcal{N}_{F})$ and let $f_{n}:D^{n}(X)%
\rightarrow Y$, $n\in \mathbb{N}$, be a continuous linear function. Then,
for every $k\in \{0\}\cup \mathbb{N}$, $f_{n}$ admits a continuous linear
extension $f_{n,k}:D^{n+2k}(X)\rightarrow Y$. If, in addition $Y$ is a
Banach space, then $\left\Vert f_{n,k}\right\Vert =\left\Vert
f_{n,k}\right\Vert $.
\end{theorem}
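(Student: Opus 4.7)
The plan is to exploit Theorem 1 (or equivalently Corollary 1): for every $m \in \{0\}\cup\mathbb{N}$ the canonical embedding $j_m : D^m(X) \to D^{m+2}(X)$ admits the retraction $D(j_{m-1}) : D^{m+2}(X) \to D^m(X)$ (with the convention $D(j_{-1}) = D(j_0)$ handled by the $m=0$ case; or simply work with $m \geq n \geq 1$ throughout). Since we already know $j_m$ is an isometric section and its corresponding retraction has norm $1$, the extension can be constructed by a one-line composition and iterated inductively on $k$.

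First I would do the base case $k=1$: given $f_n : D^n(X) \to Y$, set
\[
f_{n,1} \;\equiv\; f_n \circ D(j_{n-1}) : D^{n+2}(X) \to Y.
\]
Continuity and linearity are automatic. To check that this is an extension of $f_n$ along $j_n$, I would compute
\[
f_{n,1}\circ j_n \;=\; f_n\circ D(j_{n-1})\circ j_n \;=\; f_n\circ 1_{D^n(X)} \;=\; f_n,
\]
using $D(j_{n-1})\, j_n = 1_{D^n(X)}$ from Theorem 1(ii).

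For general $k$, I would iterate: having produced $f_{n,k-1} : D^{n+2(k-1)}(X) \to Y$ extending $f_n$, set
\[
f_{n,k} \;\equiv\; f_{n,k-1}\circ D(j_{n+2k-3}) : D^{n+2k}(X) \to Y.
\]
Then $f_{n,k}$ restricts to $f_{n,k-1}$ along $j_{n+2k-2}$, and a telescoping argument (composing the successive sections $j_n, j_{n+2},\ldots, j_{n+2k-2}$) shows that the composite section from $D^n(X)$ into $D^{n+2k}(X)$ pulls $f_{n,k}$ back to $f_n$. Equivalently, one can write once and for all $f_{n,k} = f_n \circ D(j_{n-1}) \circ D^3(j_{n-1}) \circ \cdots$, using an iterated retraction element of $R_{n}(X) \circ R_{n+2}(X) \circ \cdots$ as in Theorem 1.

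For the norm equality, I would note that by Lemma 1(i) every $D(j_m)$ is an isometric isomorphism onto its range after passing through the section, and more directly the remark preceding Theorem 1 records that every morphism in $S_n(X) \cup R_n(X)$ has norm $1$. Hence
\[
\|f_{n,k}\| \;\leq\; \|f_n\|\cdot \|D(j_{n-1})\|\cdots \|D^{2k-1}(j_{n-1})\| \;=\; \|f_n\|,
\]
while, since $j_n, j_{n+2}, \ldots$ are isometric embeddings, the reverse inequality
\[
\|f_n\| \;=\; \sup_{\|x\|\leq 1}\|f_{n,k}(j_{n+2k-2}\cdots j_n(x))\| \;\leq\; \|f_{n,k}\|
\]
is automatic. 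The Banach hypothesis on $Y$ is used only to ensure that $\|f_{n,k}\|$ is defined as an operator norm in the usual Banach-space sense (so that the norm-preserving extension is meaningful); the extension itself does not require it.

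I do not foresee a genuine obstacle: the whole content is packaged in the retraction–section identity $D(j_{n-1})\,j_n = 1_{D^n(X)}$ from Theorem 1. The only mild care needed is in the bookkeeping of indices when iterating from $k-1$ to $k$, making sure one uses $D^{2k-1}(j_{n-1})$ (equivalently $D(j_{n+2k-3})$) rather than an accidentally section-valued morphism from $S_{n+2k-2}(X)$; choosing the retraction from $R_{n+2k-2}(X)$ that corresponds to the canonical section $j_{n+2k-2}$ is exactly what makes the extension diagram commute.
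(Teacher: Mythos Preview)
Your approach is essentially identical to the paper's: define $f_{n,1}=f_n\circ D(j_{n-1})$, verify it extends $f_n$ via the identity $D(j_{n-1})j_n=1_{D^n(X)}$ from Theorem~1, and proceed by induction on $k$. The paper does exactly this.

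For the norm equality your argument is actually more direct than the paper's. The paper observes that $p_{n+2}=j_nD(j_{n-1})$ is a norm-$1$ projection onto $R(j_n)$ and then appeals to an external result (Proposition~6.6.18 of [6]) about extensions through norm-$1$ projections. You instead compute the two inequalities by hand, using $\lVert D(j_{n-1})\rVert=1$ for one direction and the isometry of $j_n$ for the other; this is cleaner and, as you noticed, does not actually require $Y$ to be Banach. (Your stated reason for the Banach hypothesis---that the operator norm would otherwise be undefined---is not correct, since operator norms make sense between arbitrary normed spaces; but your underlying observation that completeness of $Y$ plays no role in the norm computation is sound.)

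One small bookkeeping slip: in your ``equivalently'' aside you write the iterated retraction as $D(j_{n-1})\circ D^3(j_{n-1})\circ\cdots$, and in the norm estimate you use $D^{2k-1}(j_{n-1})$, whereas your main inductive construction uses $D(j_{n+2k-3})$. These are different elements of $R_{n+2k-2}(X)$ (they retract different sections, cf.\ Theorem~1(iv)), so the closed-form expression does not literally match the inductive one. This does not affect correctness---both choices have norm $1$ and both yield valid extensions---but keep the indexing consistent with the canonical chain $j_{n+2k-2}\circ\cdots\circ j_n$ if you want the ``extension'' to be along that specific embedding.
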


\begin{proof}
Let an $X,Y\in OB\mathcal{N}_{F}$, an $n\in \mathbb{N}$, and an $f_{n}\in 
\mathcal{N}_{F}(D^{n}(X),Y)=\mathcal{N}_{F}(D^{n}(X),Y)$ be given. If $k=0$,
there is nothing to prove. Let $k>0$. By Theorem 1 (and its proof), the
canonical embedding $j_{n}$ is a section having $D(j_{n-1})$ for an
appropriate retraction$,$ $D(j_{n-1})j_{n}=1_{D^{n}(X)}$. Then

$f_{n,1}=f_{n}D(j_{n-1}):D^{n+2}(X)\rightarrow Y$

\noindent is a desired extension when $k=1$. Assume that $Y$ is a Banach
space. By the proof of Corollary 1, for every $n\in \mathbb{N}$, the morphism

$p_{n+2}\equiv D^{n-1}(j_{1}D(j_{0})):D^{n+2}(X)\rightarrow D^{n+2}(X)$

\noindent is a continuous linear projection onto the retract $R(j_{n})\equiv
D^{n}(X)$ of $D^{n+2}(X)$. One readily sees that $\left\Vert
j_{1}D(j_{0})\right\Vert =1$, and thus,

$\left\Vert p_{n+2}\right\Vert =\left\Vert D^{n-1}(j_{1}D(j_{0}))\right\Vert
=\left\Vert j_{1}D(j_{0})\right\Vert =1$.

\noindent This implies the existence of a desired extension $%
f_{n,1}:D^{n+2}(X)\rightarrow Y$ (see also Proposition 6.6.18. of [6]).
Thus, in the case $k=1$, the statements are proven. The rest follows by
induction on $k$.
\end{proof}

The following extension type theorem is an improvement of Theorem 4 of [17]
(see also Lemma 3 (ii) of [17]) .

\begin{theorem}
\label{T7}Let $X$ be a normed space, let $Z\trianglelefteq X$ be a subspace
such that $\dim Cl(Z)=\dim X$ and $\dim (X/Cl(Z))<\dim X$, and let $Y$ be a
Banach space (over the same field) having $\dim Y<\dim X$. Then every
continuous linear function $f:Z\rightarrow Y$ admits a continuous linear
norm-preserving extension $\bar{f}:X\rightarrow Y$.
\end{theorem}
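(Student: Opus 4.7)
The plan is to split the extension into two stages.

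\emph{Stage 1: Extension to the closure.} Since $Z$ is by definition dense in its closure $Cl(Z)$, the continuous linear map $f:Z\rightarrow Y$ is uniformly continuous, and $Y$ is complete by hypothesis. Therefore $f$ admits a unique continuous linear extension $\tilde{f}:Cl(Z)\rightarrow Y$, and computing the operator norm as a supremum over the dense set $Z$ gives $\left\Vert \tilde{f}\right\Vert =\left\Vert f\right\Vert $.

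\emph{Stage 2: Extension from $Cl(Z)$ to $X$.} I would aim to produce a continuous linear retraction $r:X\rightarrow Cl(Z)$ with $r\circ i=1_{Cl(Z)}$, where $i:Cl(Z)\hookrightarrow X$ is the inclusion, and with $\left\Vert r\right\Vert =1$. Given such an $r$, the extension $\bar{f}:=\tilde{f}\circ r$ restricts to $f$ on $Z$, is continuous and linear, and satisfies $\left\Vert \bar{f}\right\Vert \leq \left\Vert \tilde{f}\right\Vert \cdot \left\Vert r\right\Vert =\left\Vert f\right\Vert $; the reverse inequality is automatic since any extension has at least the same operator norm. To construct $r$, the hypotheses $\dim Cl(Z)=\dim X$, $\dim (X/Cl(Z))<\dim X$, and $\dim Y<\dim X$ are essential: setting $\kappa =\dim X$ and excluding the trivial case $\kappa =\aleph _{0}$, Theorem 4 makes both $Cl(Z)$ and $X$ $\dim ^{\ast }$-stable, and Theorem 5 (iii) applied to the closed embedding $i$ yields $Sh_{\kappa ^{-}}(Cl(Z))=Sh_{\kappa ^{-}}(X)$. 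I would then mimic the mechanism of Theorem 1 and the proof of Theorem 6: the closed isometric embedding $i$ dualises by Lemma 1 (i) to an open epimorphism $D(i):D(X)\rightarrow D(Cl(Z))$ of norm one, and by combining this with the canonical isometric embeddings $j_{Cl(Z)}$ and $j_{X}$ together with the splitting provided by Corollary 1, one should extract the desired norm-one retraction $r$.

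\emph{Main obstacle.} The principal difficulty is producing $r$ with norm exactly one. In full generality, a closed subspace of a normed space need not admit any continuous linear retraction, so the dimension hypotheses must encode a structural constraint forcing a norm-preserving complementation here. I expect the key estimate $\left\Vert r\right\Vert =1$ to follow from the isometric nature of the canonical embeddings (exactly as $\left\Vert D(j_{n-1})\right\Vert =1$ in the proof of Theorem 6), with $\dim ^{\ast }$-stability ensuring that the resulting map is genuinely $Cl(Z)$-valued and not merely $D^{2}(Cl(Z))$-valued. Once $r$ is in hand, the norm equality $\left\Vert \bar{f}\right\Vert =\left\Vert f\right\Vert $ is immediate, and what remains is the routine verification that $\bar{f}$ extends $f$ linearly.
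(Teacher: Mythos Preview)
Your Stage 1 matches the paper exactly. Stage 2, however, commits to a strategy that does not work and is not what the paper does.

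\textbf{The retraction does not exist in general.} You aim for a norm-one continuous linear retraction $r:X\rightarrow Cl(Z)$ with $r\circ i=1_{Cl(Z)}$. But such an $r$ would make $Cl(Z)$ a $1$-complemented closed subspace of $X$, and then $\bar f=\tilde f\circ r$ would extend \emph{every} $\tilde f:Cl(Z)\rightarrow Y$ norm-preservingly, for \emph{every} Banach $Y$, regardless of $\dim Y$. That is strictly stronger than the theorem and is false: take $X=l_\infty$, $Cl(Z)=c_0$. These satisfy $\dim Cl(Z)=\dim X=2^{\aleph_0}$ and $\dim(X/Cl(Z))=2^{\aleph_0}$\dots actually to satisfy the codimension hypothesis one needs a more careful example, but the point stands that complementation is a geometric, not a dimensional, phenomenon. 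The mechanism you borrow from Theorem~1 and Corollary~1 works \emph{only} because the embedding there is $j_n:D^n(X)\hookrightarrow D^{n+2}(X)$, for which one has the algebraic identity $D(j_{n-1})j_n=1$. For an arbitrary closed inclusion $i:Cl(Z)\hookrightarrow X$ there is no such identity: naturality gives $D^2(i)j_{Cl(Z)}=j_X i$, which goes the wrong way and yields no retraction. Your hope that $\dim^\ast$-stability forces the image of some dual map to land in $Cl(Z)$ rather than $D^2(Cl(Z))$ has no basis.

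\textbf{What the paper actually does.} The hypothesis $\dim Y<\dim X=\kappa$ is used not to build a retraction but to ensure that $f$ (and its extension $f''$ to the Banach completion $Cl_{X^{\ast\ast}}(Cl(Z))$) factors through a term of a $\kappa^-$-expansion. The paper first passes to the Banach completions $Cl_{X^{\ast\ast}}(Cl(Z))\hookrightarrow Cl_{X^{\ast\ast}}(X)$ inside $X^{\ast\ast}$ (this is essential in the case $\dim X=\aleph_0$, which you dismiss as trivial but where $X$ itself is \emph{not} $\dim^\ast$-stable). Then Theorem~5(iii) gives $Sh_{\kappa^-}(Cl_{X^{\ast\ast}}(Cl(Z)))=Sh_{\kappa^-}(Cl_{X^{\ast\ast}}(X))$, and one shows the inclusion induces this shape isomorphism. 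The extension then comes from Theorem~3 of [17] (a shape-theoretic extension principle), not from a projection; finally one restricts to $X$. The hypothesis $\dim Y<\kappa$ enters precisely here, so that $Y$ lies in the target subcategory of the expansion.
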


\begin{proof}
Clearly, only the infinite-dimensional case asks for a proof. Let $\dim
X=\kappa \geq \aleph _{0}$, and let $Z\trianglelefteq X$, $Y$ and $%
f:Z\rightarrow Y$ be given according to the assumptions. By Lemma 1 of [17],
there exists a (unique) continuos linear extension $f^{\prime
}:Cl(Z)\rightarrow Y$ of $f$, and $\left\Vert f^{\prime }\right\Vert
=\left\Vert f\right\Vert $. Firstly, consider the case $\dim X=\aleph _{0}$.
Then, $X\cong (F_{0}^{\mathbb{N}},\left\Vert \cdot \right\Vert )$ and $\dim
Cl(Z)=\aleph _{0}$, while $X/Cl(Z)$ and $Y$ are finite-dimensional by
assumption. By Theorem 5,

$\dim X=\dim Cl(Z)<\dim X^{\ast \ast }=\dim Cl(Z)^{\ast \ast }=2^{\aleph
_{0}}$,

\noindent and, by Theorem 2 (ii),

$X^{\ast \ast }/Cl(Z)^{\ast \ast }\cong (X/Cl(Z))^{\ast \ast }$

\noindent (that is, in this case, even isomorphic to $X/Cl(Z)$). Denote by

$Cl_{X^{\ast \ast }}(Cl(Z))\subseteq Cl_{X^{\ast \ast }}(X)$ $\subseteq
X^{\ast \ast }$

\noindent the closures (the Banach completions) of $Cl(Z)\subseteq X$ in $%
X^{\ast \ast }$, and by $f^{\prime \prime }:Cl_{X^{\ast \ast
}}(Cl(Z))\rightarrow Y$ the continuous (unique, linear) extension of $%
f^{\prime }$. Since the canonical embedding into the second dual space is an
isometry, $\left\Vert f^{\prime \prime }\right\Vert =\left\Vert f^{\prime
}\right\Vert $ holds. One readily sees (see also Lemma 2 (iii) of [17]) that

$\dim (Cl_{X^{\ast \ast }}(Cl(Z)))=\dim (Cl_{X^{\ast \ast }}(X))$.

\noindent and, since $\dim (X/Cl(Z))<\aleph _{0}$, that

dim($Cl_{X^{\ast \ast }}(X)/Cl_{X^{\ast \ast }}(Cl(Z)))<\aleph _{0}$.

\noindent By applying Theorem 5 (iii) to $Cl_{X^{\ast \ast
}}(Cl(Z))\hookrightarrow Cl_{X^{\ast \ast }}(X)$ (they are Banach spaces,
hence $\dim ^{\ast }$-stable) or Theorem 5 (ii) only, it follows that

$Sh_{\text{\b{0}}}(Cl_{X^{\ast \ast }}(Cl(Z)))=Sh_{\text{\b{0}}}(Cl_{X^{\ast
\ast }}(X))$.

\noindent By means of that fact, we shall prove that $f^{\prime \prime }$
admits a continuous linear norm-preserving extension to $Cl_{X^{\ast \ast
}}(X)$. Then a desired extension $\bar{f}:X\rightarrow Y$ of $f$ may be the
restriction to $X$ of that extension. Instead of proving this separately (in
the special countably dimensional case), let us consider the general
uncountably dimensional case, i.e., $\dim X=\kappa >\aleph _{0}$, and thus, $%
\dim Cl(Z)=\kappa $, $\dim (X/Cl(Z))<\kappa $ and $\dim Y<\kappa $. As in
the countably dimensional case before, $f:Z\rightarrow Y$ admits a
continuous linear norm-preserving extension $f^{\prime \prime }:Cl_{X^{\ast
\ast }}(Z)\rightarrow Y$. By Theorem 5,

$\dim X^{\ast \ast }=\dim X=\dim Cl(Z)=\dim Cl(Z)^{\ast \ast }=\kappa $,

\noindent and, by Theorem 2 (ii) and Lemma 2 (iii) of [17],

dim($Cl_{X^{\ast \ast }}(X)/Cl_{X^{\ast \ast }}(Cl(Z)))<\kappa $.

\noindent By Theorem 5 (iii),

$Sh_{\kappa ^{-}}(Cl_{X^{\ast \ast }}(Cl(Z)))=Sh_{\kappa ^{-}}(Cl_{X^{\ast
\ast }}(X))$.

\noindent So, concerning the quotient shapes result, the uncountably
dimensional case covers the countably dimensional case as well. According to
Theorem 3 of [17], it suffices to prove that, by the inclusion $%
i:Cl_{X^{\ast \ast }}(Cl(Z))\hookrightarrow Cl_{X^{\ast \ast }}(X)$, induced
quotient shape morphism

$S_{\kappa ^{-}}(i):Cl_{X^{\ast \ast }}(Cl(Z))\hookrightarrow Cl_{X^{\ast
\ast }}(X)$

\noindent is an isomorphism of $Sh_{\kappa ^{-}}(\mathcal{B}_{F})$. To prove
this, we may use the appropriate part of the proof of [17], Theorem 4.
Indeed, that proof is based on Theorems 2 and 3 of [17]$,$ and the proof of
[17], Theorem 2, depends on Lemmata 3 and 4 of [17]. Especially, Lemma 4
(ii) of [17] dictated the restriction to the separable or bidual-like normed
spaces in order to keep control over the index sets in the considered
quotient expansions. However, that control is nothing else but the $\dim
^{\ast }$-stability of the considered normed spaces. The conclusion follows.
\end{proof}

\begin{corollary}
\label{C6}Let $Z$ be a dense subspace of an $X\in Ob(\mathcal{N}_{F})$ and
let $Y\in Ob(\mathcal{B}_{F})$ having $\dim Y<\dim \bar{X}$, where $\bar{X}$
denotes the (canonical) Banach completion $Cl_{D^{2}(X)}(j_{0}[X])$ of $X$.
If $\dim (D^{2}(X)/\bar{X})<\dim D^{2}(X)$, then, for every $n\in \mathbb{N}$%
, every continuous linear function $f:Z\rightarrow Y$ admits a continuous
linear norm-preserving extension $f_{n-1}:D^{2n}(X)\rightarrow Y$.
\end{corollary}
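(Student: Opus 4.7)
The plan is to reduce Corollary 6 to a combined application of Theorems 6 and 7. The first step is to identify $Z$ with its image under the isometric canonical embedding $j_{0}:X\hookrightarrow D^{2}(X)$, so that $Z\subseteq j_{0}[X]\subseteq \bar{X}\subseteq D^{2}(X)$. Because $Z$ is dense in $X$ and $\bar{X}$ is by definition the closure of $j_{0}[X]$ in $D^{2}(X)$, we have $Cl_{D^{2}(X)}(Z)=\bar{X}$, so $f$ may be regarded as a continuous linear function defined on a (generally non-closed) subspace of the Banach space $D^{2}(X)$.

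The heart of the argument is to apply Theorem 7 with $D^{2}(X)$ in place of the ambient normed space and $Z$ in place of the subspace. Three hypotheses must be verified: $\dim Cl_{D^{2}(X)}(Z)=\dim D^{2}(X)$, $\dim(D^{2}(X)/Cl_{D^{2}(X)}(Z))<\dim D^{2}(X)$, and $\dim Y<\dim D^{2}(X)$. The second is exactly the assumption $\dim(D^{2}(X)/\bar{X})<\dim D^{2}(X)$. The first I would deduce by cardinal arithmetic: if $\dim\bar{X}$ were strictly smaller than $\dim D^{2}(X)$, then $\dim(D^{2}(X)/\bar{X})$ would necessarily equal $\dim D^{2}(X)$, contradicting our hypothesis. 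The third then follows from $\dim Y<\dim\bar{X}=\dim D^{2}(X)$. Theorem 7 thus produces a continuous linear norm-preserving extension $f_{0}:D^{2}(X)\to Y$ of $f$ (where the preliminary density extension from $Z$ to $\bar{X}$ is subsumed by Theorem 7 via Lemma 1 of [17], and the resulting extension on $\bar{X}$ is the unique continuous one since $Y$ is Banach).

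The last step is to iterate into higher duals by invoking Theorem 6 on the morphism $f_{0}:D^{2}(X)\to Y$, with its index parameter equal to $2$ and its $k$ equal to $n-1$. Since $Y$ is a Banach space, Theorem 6 supplies a continuous linear norm-preserving extension $f_{n-1}:D^{2n}(X)\to Y$, which is the desired map; the chain of identities $j_{0}[Z]\subseteq\bar{X}\subseteq D^{2}(X)\subseteq D^{2n}(X)$ (the last inclusion realized via the natural section from Theorem 1) makes $f_{n-1}$ an honest extension of $f$.

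The principal subtlety I anticipate is the dimension bookkeeping in the verification of the hypotheses of Theorem 7; this is where the assumption $\dim(D^{2}(X)/\bar{X})<\dim D^{2}(X)$ does double duty, simultaneously forcing $\dim\bar{X}=\dim D^{2}(X)$ and, combined with $\dim Y<\dim\bar{X}$, yielding $\dim Y<\dim D^{2}(X)$. Once these are in place, the two extension theorems compose without further difficulty, and the norm-preservation is maintained at each stage precisely because $Y$ is Banach, which is exactly what Theorems 6 and 7 require.
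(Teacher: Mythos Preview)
Your proposal is correct and follows essentially the same route as the paper: extend $f$ by density to $\bar{X}$, apply Theorem~7 (with ambient space $D^{2}(X)$ and closure $\bar{X}$) to obtain $f_{0}:D^{2}(X)\to Y$, then invoke Theorem~6 to iterate to $D^{2n}(X)$. If anything, your verification of the hypotheses of Theorem~7 is more explicit than the paper's---in particular, your cardinal-arithmetic deduction that $\dim\bar{X}=\dim D^{2}(X)$ from the codimension assumption is a step the paper leaves implicit.
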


\begin{proof}
Firstly, if $\dim X<\aleph _{0}$, then $Z=X\cong D^{n}(X)$, and there is
nothing to prove. So, let $\dim X=\infty $. Then, since $\bar{X}$ is a
Banach space, $\dim \bar{X}\geq 2^{\aleph _{0}}$ holds. Notice that, by
Theorems 3 and 4, $\dim \bar{X}=2^{\aleph _{0}}>\dim X$ if and only if $\dim
X=\aleph _{0}$. Further, since $Y$ is a Banach space, either $\dim Y<\aleph
_{0}$ or $\dim Y\geq 2^{\aleph _{0}}$ holds. Therefore, in any case, $\dim
Y<\dim \bar{X}$ implies $\dim Y<\dim X$. Let $n=1,$ and consider the
corresponding diagram (in $\mathcal{N}_{F}$), i.e.,

$%
\begin{array}{c}
Z\hookrightarrow \\ 
f\searrow \\ 
\end{array}%
\begin{array}{ccc}
Cl(Z)=X\overset{\cong }{\rightarrow }j_{0}[X]\hookrightarrow & \bar{X}%
\hookrightarrow & D^{2}(X) \\ 
\downarrow f^{\prime } & \swarrow f^{\prime \prime }\quad \swarrow f_{0} & 
\\ 
Y &  & 
\end{array}%
$

\noindent in which suitable extensions $f^{\prime }$ and $f^{\prime \prime }$%
, $f^{\prime }|Z=f$ and $f^{\prime \prime }|\bar{X}=f^{\prime }$, exist by
Lemma 1 of [17], and then, a desired extension $f_{0}$ exists by Theorem 7.
Now, for $n>1,$ one applies Theorem 6.
\end{proof}

\begin{remark}
\label{R4}Since the \emph{finite} quotient shape classification of normed
spaces reduces to the relations of their algebraic dimensions established by
Theorem 5 (similarly to that of all vectorial spaces - [14] Theorem 3, (i) $%
\Leftrightarrow $ (v)), in order to get a deeper insight in this matter, one
has to consider the higher dimensional quotient shape classifications. It
asks, however, for an insight into the \textquotedblleft dark
area\textquotedblright\ of the structure of the set of all \emph{closed}
subspaces of an infinite ($\kappa $-) dimensional normed space all having
the (algebraic) dimension of the space and \emph{infinite codimension.} up
to a given infinite cardinal less than $\kappa $. Notice that the essential
fact (for the finite quotient shape classification) that all norms on a
finite-dimensional space are equivalent does not hold any more in the
infinite-dimensional case. Further, the specific properties of the bonding
morphisms between the terms having indices in a $\Lambda _{\text{\b{0}}%
}^{(n)}$ (used in the proof of Lemma 3) do not hold in the case of $\Lambda
_{\kappa ^{\prime }}^{(\kappa )}$ whenever $\dim X=\kappa ^{\prime }>\kappa
\geq \aleph _{0}$.\emph{\ \bigskip }
\end{remark}

\begin{center}
\textbf{References\smallskip }
\end{center}

\noindent \lbrack 1] K. Borsuk, \textit{Concerning homotopy properties of
compacta}, Fund. Math. \textbf{62} (1968), 223-254.

\noindent \lbrack 2] K. Borsuk, \textit{Theory of Shape}, Monografie
Matematyczne \textbf{59}, Polish Scientific Publishers, Warszawa, 1975.

\noindent \lbrack 3] J.-M. Cordier and T. Porter, \textit{Shape Theory:
Categorical Methods of Approximation}, Ellis Horwood Ltd., Chichester, 1989.
(Dover edition, 2008.)

\noindent \lbrack 4] J. Dugundji, \textit{Topology}, Allyn and Bacon, Inc.
Boston, 1978.

\noindent \lbrack 5] Dydak and J. Segal, \textit{Shape theory: An
introduction}, Lecture Notes in Math. \textbf{688}, Springer-Verlag, Berlin,
1978.

\noindent \lbrack 6] J. M. Erdman, \textit{Functional Analysis and Operator
Algebras - An Introduction}$,$ Version October 4, 2015, Portland State
University (licensed PDF)

\noindent \lbrack 7] H. Herlich and G. E. Strecker, \textit{Category Theory,
An Introduction}, Allyn and Bacon Inc., Boston, 1973.

\noindent \lbrack 8] N. Kocei\'{c} Bilan and N. Ugle\v{s}i\'{c}, \textit{The
coarse shape}, Glasnik. Mat. \textbf{42}(\textbf{62}) (2007), 145-187.

\noindent \lbrack 9] E. Kreyszig, \textit{Introductory Functional Analysis
with Applications}, John Wiley \& Sons, New York, 1989.

\noindent \lbrack 10] S. Kurepa, \textit{Funkcionalna analiza : elementi
teorije operatora}, \v{S}kolska knjiga, Zagreb, 1990.

\noindent \lbrack 11] S. Marde\v{s}i\'{c} and J. Segal, \textit{Shape Theory}%
, North-Holland, Amsterdam, 1982.

\noindent \lbrack 12] W. Rudin, \textit{Functional Analysis, Second Edition}%
, McGraw-Hill, Inc., New York, 1991.

\noindent \lbrack 13] N. Ugle\v{s}i\'{c}, \textit{The shapes in a concrete
category}, Glasnik. Mat. Ser. III \textbf{51}(\textbf{71}) (2016), 255-306.

\noindent \lbrack 14] N. Ugle\v{s}i\'{c}, \textit{On the quotient shape of
vectorial spaces}, Rad HAZU - Matemati\v{c}ke znanosti, Vol. \textbf{21} = 
\textbf{532} (2017), 179-203.

\noindent \lbrack 15] N. Ugle\v{s}i\'{c}, \textit{On the quotient shapes of
topological spaces}, Top. Appl. \textbf{239} (2018), 142-151.

\noindent \lbrack 16] N. Ugle\v{s}i\'{c}, \textit{The quotient shapes of }$%
l_{p}$\textit{\ and }$L_{p}$\textit{\ spaces}, Rad HAZU. Matemati\v{c}ke
znanosti, Vol. \textbf{27} = \textbf{536} (2018), 149-174.

\noindent \lbrack 17] N. Ugle\v{s}i\'{c}, \textit{The quotient shapes of
normed spaces and application}, submitted.

\noindent \lbrack 18] N. Ugle\v{s}i\'{c} and B. \v{C}ervar, \textit{The
concept of a weak shape type}, International J. of Pure and Applied Math. 
\textbf{39} (2007), 363-428.\vspace{0.3in}

\begin{center}
O DUALIMA NORMIRANIH PROSTORA I KVOCIENTNIM OBLICIMA\smallskip
\end{center}

Nekoliko svojstava normiranoga dualnog Hom-funktora $D$ $i$ njegovih
iteracija $D^{n}$ je ustanovljeno. Primjerice: $D$ preokre\'{c}e svako
kanonsko smje\v{s}tenje (u drugi dualni prostor) u retrakciju (tre\'{c}ega
dualnog prostora na onaj prvi); $D$ povisuje (algebrasku) dimenziju \emph{%
samo} prebrojivo bezkona\v{c}no-dimenzionalnim normiranim prostorima; $D$ ne
mienja kona\v{c}ni kvocientni oblikovni tip. Spomo\'{c}u toga je podpuno rie%
\v{s}ena razredba svih normiranih vektorskih prostora po \emph{kona\v{c}nomu}
kvocientnom tipu. Kao primjena, za posljedicu su izvedena dva pou\v{c}ka o
pro\v{s}irivanju neprekidnih linearnih funkcija.

\end{document}